\newcommand{\C}{{\mathbb C}}       
\newcommand{\R}{{\mathbb R}}       
\newcommand{\N}{{\mathbb N}}       %
\newcommand{\DD}{{\mathcal D}}
\newcommand{\HH}{{\mathcal H}}
\newcommand{\BZ}{{\mathcal B}}
\newcommand{\WW}{{\mathcal W}}
\newcommand{\TT}{{\mathcal T}}
\newcommand{\MD}{{\mathcal {MD}}}
\newcommand{\cH}{{\mathcal{H}}}
\newcommand{\diam}{{\rm diam}}
\newcommand{\dist}{{\rm dist}}
\newcommand{\fiproof}{{\hspace*{\fill} $\square$ \vspace{2pt}}}
\newcommand{\rf}[1]{{(\ref{#1})}}
\newcommand{\supp}{\operatorname{supp}}
\newcommand{\ve}{{\varepsilon}}
\newcommand{\vv}{{\vspace{2mm}}}
\newcommand{\vvv}{{\vspace{3mm}}}
\newcommand{\wt}[1]{{\widetilde{#1}}}
\newcommand{\wh}[1]{{\widehat{#1}}}
\newcommand{\LD}{{\mathsf{LD}}}
\newcommand{\HD}{{\mathsf{HD}}}
\newcommand{\UB}{{\mathsf{UB}}}
\newcommand{\sss}{{\mathsf{Stop}}}
\newcommand{\nex}{{\mathsf{Next}}}
\newcommand{\ttt}{{\mathsf{Top}}}
\newcommand{\reg}{{\mathsf{Reg}}}
\newcommand{\tree}{{\mathsf{Tree}}}
\newcommand{\tr}{{\mathsf{Tr}}}
\newcommand{\treg}{{\mathsf{Treg}}}
\newcommand{\eend}{{\mathsf{End}}}
\newtheorem{theorem}{Theorem}[section]
\newtheorem*{theorem*}{Theorem}
\newtheorem*{lemma*}{Lemma}
\newtheorem*{theorema*}{Theorem A}
\newtheorem*{theoremb*}{Theorem B}
\newtheorem*{theoremc*}{Theorem C}
\newtheorem*{mainlemma*}{Main Lemma}
\newtheorem{mainlemma}[theorem]{Main Lemma}
\newtheorem{lemma}[theorem]{Lemma}
\newtheorem{coro}[theorem]{Corollary}
\theoremstyle{definition}
\newcommand{\ps}[1]{\left( #1 \right)}
\newcommand{\av}[1]{\left| #1 \right|}
\newcommand{\ip}[1]{\left\langle #1 \right\rangle}
\def\Xint#1{\mathchoice		
{\XXint\displaystyle\textstyle{#1}}%
{\XXint\textstyle\scriptstyle{#1}}%
{\XXint\scriptstyle\scriptscriptstyle{#1}}%
{\XXint\scriptscriptstyle\scriptscriptstyle{#1}}%
\!\int}
\def\XXint#1#2#3{{\setbox0=\hbox{$#1{#2#3}{\int}$ }
\vcenter{\hbox{$#2#3$ }}\kern-.58\wd0}}
\def\avint{\Xint-}
\theoremstyle{remark}
\newtheorem{remark}[theorem]{\bf Remark}
\numberwithin{equation}{section}
\begin{document}

\title[Rectifiability in terms of Jones' square function: Part II]{Characterization of 
$n$-rectifiability in terms of Jones' square function: Part II
}

\author{Jonas Azzam}
\address{Departament de Matem\`atiques\\ Universitat Aut\`onoma de Barcelona \\ Edifici C Facultat de Ci\`encies\\08193 Bellaterra (Barcelona, Catalonia) }
\email{jazzam@mat.uab.cat}

\author{Xavier Tolsa}
\address{ICREA and Departament de Matem\`atiques\\ Universitat Aut\`onoma de Barcelona \\ Edifici C Facultat de Ci\`encies\\
08193 Bellaterra (Barcelona, Catalonia) }
\email{xtolsa@mat.uab.cat}

\thanks{J.A. and X.T. were supported by the ERC grant 320501 of the European Research Council (FP7/2007-2013).
X.T. was also partially supported by the grants 2014-SGR-75 (Catalonia) and MTM2013-44304-P (Spain).}

\maketitle

\begin{abstract}
We show that a Radon measure $\mu$ in $\R^d$ which is absolutely continuous with respect to the $n$-dimensional Hausdorff measure $\HH^n$ is $n$-rectifiable if the so called Jones' square function is finite $\mu$-almost everywhere. The converse of this result is proven in a companion paper by the second author, and hence these two results give a classification of all $n$-rectifiable measures which are absolutely continuous with respect to $\HH^{n}$. Further, in this paper we also investigate the relationship between the Jones' square function and the so called Menger curvature of a measure with linear growth, and
we show an application to the study of analytic capacity. 
\end{abstract}

\section{Introduction}

Let $\mu$ be a Radon measure in $\R^d$.
One says that $\mu$ is $n$-rectifiable if
there are Lipschitz maps
$f_i:\R^n\to\R^d$, $i=1,2,\ldots$, such that 
\begin{equation}\label{eqdef00}
\mu\biggl(\R^d\setminus\bigcup_i f_i(\R^n)\biggr) = 0,
\end{equation}
and $\mu$ is absolutely continuous with respect to the $n$-dimensional Hausdorff measure $\HH^n$. 
A set $E\subset \R^d$ is called $n$-rectifiable if the measure $\HH^n|_E$ is $n$-rectifiable.

This is the second of a series of two papers where we contribute to characterize when measures in the Euclidean space are rectifiable. This subject stems from the work of Besicovitch (\cite{Bes1}, \cite{Bes2}, \cite{Bes3}) who first discovered the geometric dichotomy between $1$-rectifiable sets (sets that may be covered by one-dimensional Lipschitz graphs) and purely $1$-unrectifiable sets (sets that have $\HH^{1}$-measure zero intersection with any Lipschitz graph). Finding criteria to distinguish these two classes of sets has become a field of its own due to its applications to various analytic fields.

A particular example we mention is the one of singular integrals. In \cite{DS1}, David and Semmes studied $n$-Ahlfors-David regular (or $n$-AD) measures $\mu$ (meaning the measure of any ball centred on its support has $\mu$-measure at least and at most a constant times the radius of the ball to the power $n$) and classified which of these measures are {\it uniformly rectifiable}. There are several equivalent definitions of this term: for David and Semmes, the uniformly rectifiable measures $\mu$ were firstly the $n$-AD regular measures for which a certain big class of singular integral operators with odd kernel is  bounded in $L^2(\mu)$; the geometric definition, however, is more transparent, and says that any ball $B$ centered on $\supp\mu$ with radius $r(B)$ contains an $L$-Lipschitz image of a subset of $\mathbb{R}^{n}$ of measure at least $c\,r(B)^{n}$, where $c$ and $L$ are fixed constants. 

Over the course of \cite{DS1} and \cite{DS2}, David and Semmes derived several other equivalent formulations of uniform wrectifiability. We will review the one that most concerns us presently: Given a closed ball $B \subset \R^d$ with radius $r(B)$, an integer $0<n<d$, and $1\leq p <\infty$, let
$$\beta_{\mu,p}^n(B) = \inf_L \left(\frac1{r(B)^n} \int_{B} \left(\frac{\dist(y,L)}{r(B)}\right)^p\,d\mu(y)\right)^{1/p},$$
where the infimum is taken over all $n$-planes $L\subset \R^d$. Given a fixed $n$, to simplify notation we will drop the exponent $n$ and we will write $\beta_{\mu,p}(B)$ instead of $\beta_{\mu,p}^n(B)$. Then an $n$-AD regular measure $\mu$ is uniformly rectifiable if and only if there is some $c>0$ so that, for any $B$ centred on $\supp \mu$,
\[\int_{B}\int_{0}^{r(B)}\beta_{\mu,2}(B(x,r))^2\frac{dr}{r}d\mu(x)\leq c\mu(B).\]

The $\beta_{\mu,p}$ coefficients are a generalization of the so-called Jones $\beta$-numbers introduced in \cite{J-TSP}; there, he used an $L^{\infty}$-version of $\beta_{\mu,p}$ to characterize {\it all} compact subsets of the plane which can be contained in a rectifiable set, a characterization that extends to higher dimensions and even to Hilbert spaces (see \cite{O-TSP} and \cite{S-TSP}).

Many of the conditions in the David and Semmes theory rely heavily on the AD regularity assumption on the measure. In this paper and its prequel, we show that a suitable version of the above characterization just mentioned extends to much more general Radon measures.

The upper and lower $n$-dimensional densities of $\mu$ at a point $x\in\R^d$ are defined, respectively, by
$$\Theta^{n,*}(x,\mu)= \limsup_{r\to 0}\frac{\mu(B(x,r))}{(2r)^n},\qquad
\Theta^{n}_*(x,\mu)= \liminf_{r\to 0}\frac{\mu(B(x,r))}{(2r)^n}.$$
In case both coincide, we denote $\Theta^{n}(x,\mu)=\Theta^{n,*}(x,\mu)=\Theta^{n}_*(x,\mu)$, and this is called
the $n$-dimensional density of $\mu$ at $x$.

The main result of this paper is the following:

\begin{theorem}\label{teo1}
Let $\mu$ be a finite Borel measure in $\R^d$ 
such that $0<\Theta^{n,*}(x,\mu)<\infty$ for $\mu$-a.e.\ $x\in\R^d$. If
\begin{equation}\label{eqjones*}
\int_0^1 \beta_{\mu,2}(x,r)^2\,\frac{dr}r<\infty \quad\mbox{ for $\mu$-a.e.\ $x\in\R^d$,}
\end{equation}
then $\mu$ is $n$-rectifiable.
\end{theorem}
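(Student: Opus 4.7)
The plan is to show that $\mu$-almost every $x$ lies on some Lipschitz $n$-dimensional graph, and then to use the hypothesis $0<\Theta^{n,*}(x,\mu)<\infty$ to deduce both the existence of the Lipschitz covering and the absolute-continuity part of the definition of $n$-rectifiability. The first move is a standard reduction: applying Egorov and Lusin, for any $\eta>0$ produce a compact subset $F\subset\R^d$ with $\mu(\R^d\setminus F)<\eta$, a scale $r_0>0$, and a constant $C_0$ such that $\mu(B(x,r))\leq C_0 r^n$ for every $x\in F$ and $r\leq r_0$, together with $\int_0^{r_0}\beta_{\mu,2}(x,r)^2\,\frac{dr}{r}\leq\delta$ for all $x\in F$, where $\delta>0$ is a small constant to be chosen. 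It then suffices to show that $\mu|_F$ is concentrated, up to a small-measure error, on a countable union of Lipschitz graphs.

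The next step is to build a David-Mattila-type dyadic lattice $\DD$ adapted to $\mu$, since $\mu$ is not assumed AD-regular and standard dyadic cubes are unavailable. Each $Q\in\DD$ comes with a center $z_Q$, a side length $\ell(Q)$, and a fattened ball $B_Q$; the lattice distinguishes ``doubling'' cubes, between which the measure transitions in a controlled way. A standard comparison turns the pointwise hypothesis into a Carleson-type packing: for a top cube $R$ meeting $F$,
$$\sum_{Q\in\DD,\,Q\subseteq R}\beta_{\mu,2}(B_Q)^2\,\mu(Q\cap F)\;\lesssim\;\delta\,\mu(R).$$

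The heart of the proof is a corona-type decomposition of $\DD(R)$ into stopping trees, modelled on the David-Semmes construction but adapted to the non-AD-regular framework. Fix a top cube $R$ together with a best-fit $n$-plane $L_R$ through $z_R$. A descendant $Q\subseteq R$ is declared a stopping cube if either (a) the normalised density $\mu(B_Q)/\ell(Q)^n$ escapes a fixed bilateral range around $\mu(B_R)/\ell(R)^n$, (b) $\beta_{\mu,2}(B_Q)$ exceeds a small threshold $\ve$, or (c) the best-fit plane $L_Q$ tilts from $L_R$ by an angle beyond a fixed constant. On the non-stopping tree $\tree(R)$ the measure is uniformly close to a family of planes differing by small rotations, and a now-standard (but delicate) interpolation argument produces a Lipschitz graph $\Gamma_R$ over $L_R$ of slope $O(\ve)$ that captures a proportion of $\mu|_R$ bounded below by a universal constant. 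The $\beta$-type and tilt-type stopping cubes have total measure $\lesssim\delta\,\mu(R)$ by the packing estimate above and a standard argument bounding angle oscillation by $\sum\beta^2$. Iterating the construction on the uncovered descendants of each root, one obtains a countable Lipschitz-graph cover of $\mu|_F$ up to arbitrarily small measure; sending $\eta\to 0$ and invoking absolute continuity (a consequence of the finite upper density together with the graph cover, via Federer-type differentiation) completes the proof.

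The main obstacle is the density-drop stopping rule in the corona decomposition, which has no counterpart in the AD-regular theory. Without a uniform lower density bound there is nothing preventing $\mu(B_Q)$ from being much smaller than $\ell(Q)^n$ even for $Q$ centred on $\supp\mu$, so stopping cubes of type (a) can in principle split off a large fraction of the mass along a descending chain. Showing that one nonetheless recovers a definite proportion of the measure at each root---either because the tree is long enough to produce a Lipschitz graph, or because the split-off mass concentrates on smaller roots amenable to iteration---is the delicate point. Coupled with the task of translating the pointwise $\beta_{\mu,2}$-hypothesis into the dyadic packing inequality within a David-Mattila lattice that is not AD-regular, this is where the bulk of the technical work lies.
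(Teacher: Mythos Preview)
Your outline has the right architecture---David--Mattila lattice, corona decomposition, Lipschitz-surface construction---and you have correctly located the crux: the density-escape stopping cubes (your type (a)) have no reason, a priori, to carry small mass, and without that the claim that $\Gamma_R$ captures a fixed proportion of $\mu|_R$ is unsupported. You flag this in your final paragraph but do not resolve it, so the proposal as written has a genuine gap precisely at the point you identify.

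The paper's resolution is not a single estimate but a combination of three ideas absent from your sketch. First, low-density cubes $\LD(R)$ are shown to carry small mass \emph{after} the surface $\Gamma_R$ is built: one proves that outside a set $\mathsf{Far}$ of mass $\lesssim\alpha^{1/2}\mu(R)$ every point of $R$ lies $\alpha^{1/2}\ell(Q)$-close to the approximating plane at each scale, hence close to $\Gamma_R$; since $\Gamma_R$ is $n$-AD-regular, the low-density cubes near it can be covered with total mass $\lesssim\tau\,\Theta_\mu(2B_R)\,\HH^n(\Gamma_R\cap 2B_R)\lesssim\tau\,\mu(R)$. Second, high-density cubes $\HD(R)$ are \emph{not} bounded per-tree at all. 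Instead one proves a global packing $\sum_{R\in\ttt}\Theta_\mu(2B_R)\,\mu(R)<\infty$; whenever $\HD(R)$ carries $\geq\tfrac14\mu(R)$, the density-weighted mass jumps by a factor $\gtrsim A$ passing to the next layer, so such trees can be absorbed into the packing against the layer below (this is the ``$ID$'' argument of Lemma~\ref{lemID}). The conclusion of rectifiability then comes not from a fixed-proportion iteration but from the packing: at $\mu$-a.e.\ $x$ the sum $\sum_{R\ni x}\Theta_\mu(2B_R)$ is finite, which together with $\Theta^{n,*}(x,\mu)>0$ forces the chain of trees through $x$ to terminate, placing $x$ on some $\Gamma_R$. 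Third, to control plane angles in a non-AD-regular setting one needs an additional ``unbalanced'' stopping condition $\UB(R)$: cubes whose mass concentrates near a lower-dimensional plane are stopped, and one shows (Lemma~\ref{lembalan2}) that such cubes contain doubling sub-cubes of much higher density, so they feed into the same $ID$-type mechanism as $\HD(R)$. Your tilt-stopping rule (c) does not substitute for this, because without balancedness the approximating plane is not well-determined by the measure and the angle estimate fails.
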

\vv

The integral on the left side of \rf{eqjones*} is a version of the so called Jones' square function. 
In the part I paper \cite{Tolsa-nec} it has been shown that this is finite $\mu$-a.e.\ if $\mu$ is $n$-rectifiable. 
So by  combining this result with Theorem \ref{teo1} we deduce the following result:

\begin{coro}\label{coro1}
Let $\mu$ be a finite Borel measure in $\R^d$ 
such that $0<\Theta^{n,*}(x,\mu)<\infty$ for $\mu$-a.e.\ $x\in\R^d$.  Then $\mu$ is $n$-rectifiable if and only if
$$\int_0^1 \beta_{\mu,2}(x,r)^2\,\frac{dr}r<\infty \quad\mbox{ for $\mu$-a.e.\ $x\in\R^d$.}$$
\end{coro}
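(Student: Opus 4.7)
Corollary \ref{coro1} follows immediately by combining Theorem \ref{teo1} with the main result of \cite{Tolsa-nec}, so the task reduces to proving Theorem \ref{teo1}. The plan is to decompose $\mu$ into pieces that behave almost like $n$-AD regular measures and on which a corona-type Carleson packing estimate holds, and then to imitate, at the level of a single corona ``tree'', the David--Semmes construction of Lipschitz graphs from small $\beta$-numbers. By countable additivity and Egorov's theorem, it suffices to fix constants $0<a<A<\infty$ and $M<\infty$, take a compact set $E\subset\R^{d}$ on which
\[
a\le\Theta^{n,*}(x,\mu)\le A\quad\text{and}\quad \int_{0}^{1}\beta_{\mu,2}(x,r)^{2}\,\frac{dr}{r}\le M\ \text{ for all }x\in E,
\]
and exhibit a Borel subset $F\subset E$ with $\mu(F)\ge c(a,A,M)\,\mu(E)$ contained in an $n$-rectifiable set; iterating this with $E$ replaced by $E\setminus F$ and varying $(a,A,M)$ along a sequence then covers $\mu$ up to null measure.

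On the support of $\mu|_{E}$ I would set up a David--Mattila dyadic lattice $\mathcal{D}$ to circumvent the lack of doubling. Starting from a top cube $R$ with $\mu(R)/\ell(R)^{n}$ close to its $\mu$-typical value, I would run a stopping-time procedure declaring a descendant $Q\subset R$ terminal if either
\begin{enumerate}
\item[(HD)] the density ratio $\mu(Q)/\ell(Q)^{n}$ drifts far from that of $R$,
\item[(BC)] the Carleson sum $\sum_{Q\subset Q'\subset R}\beta_{\mu,2}(Q')^{2}\,\ell(Q')^{n}$ first exceeds a large multiple of $\ell(Q)^{n}$, or
\item[(BB)] the flatness $\beta_{\mu,2}(Q)$ itself first exceeds a small threshold $\ve_{0}$.
\end{enumerate}
The cubes surviving all three tests form a tree $\mathcal{T}(R)$ on which the density is pinned in a narrow range and the $\beta_{\mu,2}$'s are uniformly small. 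The global bound $\int_{E}\int_{0}^{1}\beta_{\mu,2}(x,r)^{2}\frac{dr}{r}\,d\mu(x)\le M\mu(E)$, together with Fubini and a Chebyshev argument, controls the total $\mu$-measure of (BC) and (BB) stopping cubes, while the upper density hypothesis controls (HD).

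On $\mathcal{T}(R)$ each $Q$ comes with an approximating $n$-plane $L_{Q}$, and the (BC) condition forces a summable control on the angles between $L_{Q}$ and $L_{Q'}$ along any chain of nested cubes. A Reifenberg-type parametrization in the spirit of David--Toro would then produce a single $n$-dimensional Lipschitz graph $\Gamma_{R}$ over $L_{R}$ lying in a thin neighborhood of every $L_{Q}$. The uniform smallness of $\beta_{\mu,2}$ inside $\mathcal{T}(R)$, combined with the density pinch, should show that $\mu(\Gamma_{R}\cap R)\ge c(a,A,M)\mu(R)$, yielding the desired subset $F$ at scale $R$; points where $\beta_{\mu,2}(x,r)\to 0$ as $r\to 0$ along the tree are captured on $\Gamma_{R}$ itself rather than merely near it.

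The main obstacle is Step 3. Classical Reifenberg and David--Toro constructions assume flatness at every point and every scale, but here small $\beta_{\mu,2}$'s only hold at cubes of the tree and only in an $L^{2}(\mu)$-averaged sense. Converting this averaged flatness into pointwise closeness of the approximating planes $L_{Q}$, controlling the accumulation of angular error over arbitrarily deep branches so that the resulting graph is genuinely Lipschitz (and not merely H\"older), and guaranteeing that $\Gamma_{R}$ captures a genuine positive $\mu$-measure rather than simply passing close to where $\mu$ concentrates, is where the bulk of the technical work will go. Subtle extra errors also appear because the stopping regions of types (HD), (BC), (BB) partition $R$ into pieces that must be patched together consistently, and because the non-Ahlfors-regular nature of $\mu$ requires more care than in \cite{DS1}, \cite{DS2} when translating $L^{2}(\mu)$ geometric bounds into metric containment on the graph.
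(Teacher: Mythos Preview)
Your first sentence --- reducing Corollary~\ref{coro1} to Theorem~\ref{teo1} together with the necessity result of \cite{Tolsa-nec} --- is exactly the paper's argument and is correct.

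The remainder of your proposal is a sketch of Theorem~\ref{teo1}, and while the overall architecture matches the paper (David--Mattila lattice, corona stopping, David--Toro parametrization), there is a genuine gap in how you handle the density stopping. You write that ``the upper density hypothesis controls (HD)'', but this is false in the paper's setting and is explicitly flagged there: the top cube $R$ may have $\Theta_\mu(2B_R)$ far below the global density cap $C_*$, so a descendant $Q$ can satisfy $\Theta_\mu(2B_Q)\geq A\,\Theta_\mu(2B_R)$ while remaining well under $C_*$, and there is no direct bound on the total measure of such cubes. The paper's resolution is the reverse of yours: it shows that the \emph{low-density} stopping cubes have small total measure (using that the David--Toro surface $\Gamma_R$ is $n$-AD regular, so disjoint balls near $\Gamma_R$ at a fixed scale are few), and it absorbs the uncontrolled high-density cubes into the corona packing via an ``increasing density'' argument (Lemma~\ref{lemID} and Lemma~\ref{lemkey62}): whenever $\HD(R)$ or $\UB(R)$ carries a quarter of $\mu(R)$, the weighted sum $\sum_{Q\in\nex(R)}\Theta_\mu(2B_Q)\mu(Q)$ strictly dominates $\Theta_\mu(2B_R)\mu(R)$, and this telescopes against the global bound $\Theta_\mu\lesssim C_*$.

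Your sketch also omits the \emph{unbalanced} stopping condition $\UB(R)$ and the balanced-balls machinery of Section~\ref{secbalan}. This is not a minor omission: it is precisely the device the paper uses to convert $L^2(\mu)$-averaged flatness into pointwise angle control between adjacent approximating planes (Lemma~\ref{distlqlemma}), i.e.\ to resolve what you yourself identify as ``the main obstacle'' in your Step~3. Without a mechanism guaranteeing that each cube in the tree contains $n+1$ well-spread $\mu$-balls of definite mass, Chebyshev against $\beta_{\mu,2}$ does not pin down the plane $L_Q$, and the David--Toro hypothesis \eqref{ek} cannot be verified.
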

\vv

Notice that the preceding corollary applies to any measure $\mu=\HH^n|_E$, with $\HH^n(E)<\infty$. So we have:

\begin{coro}\label{coro2}
Let $E\subset\R^d$ be an $\HH^n$-measurable set with $\HH^n(E)<\infty$. The set $E$ is $n$-rectifiable if and only if
$$\int_0^1 \beta_{\HH^n|_E,2}(x,r)^2\,\frac{dr}r<\infty \quad\mbox{ for $\HH^n$-a.e.\ $x\in E$.}$$
\end{coro}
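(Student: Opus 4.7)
The plan is to reduce Corollary \ref{coro2} to a direct application of Corollary \ref{coro1} with the choice $\mu = \HH^n|_E$. First I would note that $\mu$ is trivially a finite Borel measure because $\HH^n(E)<\infty$, and that by the very definition of $n$-rectifiability of a set stated at the outset, $E$ is $n$-rectifiable if and only if the measure $\HH^n|_E$ is $n$-rectifiable (absolute continuity of $\HH^n|_E$ with respect to $\HH^n$ being automatic).

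Next I would verify the density hypothesis of Corollary \ref{coro1}, namely $0<\Theta^{n,*}(x,\mu)<\infty$ for $\mu$-a.e.\ $x$. Both bounds are classical facts from geometric measure theory (see for instance Mattila's book on geometry of sets and measures): whenever $E\subset\R^d$ is $\HH^n$-measurable with $\HH^n(E)<\infty$, one has
\[
2^{-n}\le \Theta^{n,*}(x,\HH^n|_E)\le 1 \quad\text{for $\HH^n$-a.e.\ }x\in E.
\]
In particular $0<\Theta^{n,*}(x,\mu)<\infty$ holds $\mu$-a.e., so the hypotheses of Corollary \ref{coro1} are satisfied.

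Finally, I would simply invoke Corollary \ref{coro1}: it gives the equivalence between $n$-rectifiability of $\mu=\HH^n|_E$ and the $\mu$-a.e.\ finiteness of $\int_0^1 \beta_{\mu,2}(x,r)^2\,\frac{dr}{r}$. Since $\mu$ and $\HH^n|_E$ are the same measure, the integrand is exactly the one appearing in the statement, and the $\mu$-a.e.\ condition translates to an $\HH^n$-a.e.\ condition on $E$. This closes the argument.

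There is no real obstacle here; the corollary is essentially a dictionary translation between Corollary \ref{coro1} and the language of sets. The only thing that requires any care is invoking the classical two-sided density bound to check the hypothesis $0<\Theta^{n,*}<\infty$, so that Corollary \ref{coro1} applies without further work.
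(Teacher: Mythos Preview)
Your proposal is correct and matches the paper's approach exactly: the paper simply remarks that Corollary~\ref{coro1} applies to $\mu=\HH^n|_E$ whenever $\HH^n(E)<\infty$, and your write-up just makes explicit the classical density bound $2^{-n}\le\Theta^{n,*}(x,\HH^n|_E)\le 1$ needed to verify its hypothesis.
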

\vv

According to \cite{Badger-Schul}, Peter Jones conjectured in 2000 that, given an arbitrary Radon measure $\mu$ in $\R^d$, some condition in the spirit of \rf{eqjones*} should imply that $\mu$ is $n$-rectifiable in the sense that there are Lipschitz maps
$f_i:\R^n\to\R^d$, $i=1,2,\ldots$, such that \rf{eqdef00} holds, without assuming $\mu$ to be absolutely continuous with respect 
to Hausdorff measure.
Corollary \ref{coro1} shows that this conjecture holds precisely in the particular case when $\mu\ll\HH^n$. 


Let us remark that Theorem \ref{teo1} was already known to hold under the additional assumption that the lower density
$\Theta_*^n(x,\mu)$ is positive $\mu$-a.e., by a theorem due to Pajot \cite{Pa} valid for $\mu=\HH^{n}|_{E}$, recently extended by Badger and Schul \cite{BS2} to Radon measures such that $0<\Theta_*^n(x,\mu)\leq \Theta^{n,*}(x,\mu)<\infty$ $\mu$-a.e.
The hypothesis on the positiveness of the lower density 
$\Theta_*^n(x,\mu)$ is essential in the arguments in \cite{Pa} and \cite{BS2} because, roughly speaking, it allows the authors to reduce their assumptions to the case where
the measure $\mu$ is supposed to be $n$-AD-regular. 

Recall that if $\mu$ is a measure of the form $\mu=\HH^n|_E$, with $\HH^n(E)<\infty$, then
$\Theta^{n,*}(x,\mu)>0$ $\mu$-a.e., while it may happen that $\Theta_*^{n}(x,\mu)=0$ $\mu$-a.e. So from Pajot's theorem and
its further generalization by Badger and Schul one cannot deduce the characterization of $n$-rectifiable sets in Corollary
\ref{coro2}.


We will prove Theorem \ref{teo1} by means of a suitable corona type decomposition in terms of some ``dyadic cubes''
introduced by David and Mattila \cite{David-Mattila}. This corona type decomposition has some similarities with the one
from \cite{Tolsa-delta}: it splits the dyadic lattice into some collections of cubes, which we will call ``trees'', where,
roughly speaking, the density of $\mu$ does not oscillate too much and most of the measure is concentrated close to a Lipschitz manifold. To construct this Lipschitz manifold we will use a nice theorem from David and Toro \cite{DT1} which is appropriate
to parametrize Reifenberg flat sets with holes and is particularly well adapted to constructions of Lipschitz 
manifolds involving stopping time arguments, such as in our case. Further, we will show that the family of trees of the corona decomposition
satisfies a packing condition by following arguments inspired by the some of the techniques used in \cite{Tolsa-bilip}
to prove the bilipschitz ``invariance'' of analytic capacity.
\vv

Our second main result in this paper relates the curvature of a measure to its $\beta_{\mu,2}$-numbers, and it involves also the densities $\Theta_\mu^1(x,r)=\dfrac{\mu(B(x,r))}{r}$, for $x\in\R^d$ and $r>0$.
Recall that
the so called {\it Menger curvature} of $\mu$ is defined by 
$$c^2(\mu) = \iiint\frac1{R(x,y,z)^2}\,d\mu(x)\,d\mu(y)\,d\mu(z),$$
where $R(x,y,z)$ stands for the radius of the circumference passing through $x,y,z$. The notion of
curvature of a measure was introduced by Melnikov \cite{Melnikov} while studying analytic capacity.
Because its relationship to the Cauchy transform on the one hand and to $1$-rectifiability on the other hand
(see \cite{MV} and \cite{Leger}),
curvature of measures has played a key role in the solution of Vitushkin's conjecture by David \cite{David-vitus} and in the
in the proof of the semiadditivity of analytic capacity by Tolsa \cite{Tolsa-sem}. Here we prove the following:

\vv
\begin{theorem}  \label{teocurv-intro}
Let $n=1$, $d=2$, $\mu$ be a finite compactly supported Radon measure in $\R^2$  such that $\mu(B(x,r))\leq r$ for all $x\in\R^2$ and $r>0$.
Then
$$c^2(\mu) + \|\mu\|\sim \iint_0^\infty \beta_{\mu,2}(x,r)^2\,\Theta_\mu^1(x,r)\,\frac{dr}r\,d\mu(x) + \|\mu\|,$$
where the implicit constant is an absolute constant.
\end{theorem}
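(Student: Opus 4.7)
The plan is to start from Melnikov's identity
\[
\frac{1}{R(x,y,z)^2} = \frac{4\,\dist(x,L_{y,z})^2}{|x-y|^2\,|x-z|^2},
\]
(where $L_{y,z}$ is the line through $y$ and $z$), together with its two symmetric versions, which rewrites $c^2(\mu)$ as a triple integral of squared distances to lines through pairs of points. On the other side, by definition $\beta_{\mu,2}(x,r)^2\,\Theta_\mu^1(x,r) = r^{-3}\,\mu(B(x,r))\,r^{-1}\inf_L\int_{B(x,r)}\dist(y,L)^2\,d\mu(y)$, so both sides can be expressed in terms of the same building block, namely squared distances of points of $\supp\mu$ to affine lines. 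The strategy is then to pair scales $r$ with pairs $(y,z)$ of points at distance comparable to $r$ and use the linear growth hypothesis $\mu(B(x,r))\leq r$ throughout to move between the two sides.

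For the upper bound $\iint \beta_{\mu,2}^2\,\Theta_\mu^1\,\frac{dr}{r}\,d\mu \lesssim c^2(\mu) + \|\mu\|$, I would fix $x$ and $r$, and for each pair $(y,z)\in B(x,r)^2$ use $L_{y,z}$ as a competitor for the infimum defining $\beta_{\mu,2}(x,r)$. Averaging this choice over $y,z$ weighted by $d\mu(y)\,d\mu(z)/\mu(B(x,r))^2$ and restricting to pairs with $|y-z|\gtrsim r$ gives
\[
\beta_{\mu,2}(x,r)^2\,\mu(B(x,r)) \lesssim \frac{1}{r^{3}\,\mu(B(x,r))^2}\iint_{B(x,r)^2,\,|y-z|\gtrsim r}\!\!\!\int_{B(x,r)}\dist(w,L_{y,z})^2\,d\mu(w)\,d\mu(y)\,d\mu(z).
\]
Using linear growth to replace the outer $1/\mu(B(x,r))^2$ by factors of $1/r^2$ (absorbing the mismatch in the $\|\mu\|$ error term) and recognising the integrand, up to the right power of $|y-z|\sim r$ and $|x-z|\sim r$, as $R(w,y,z)^{-2}$ via Melnikov, the inequality follows after integrating in $r$ via Fubini. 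The scales and pairs with insufficient separation (where $\mu$ is concentrated near a point) contribute at most $\|\mu\|$.

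For the lower bound $c^2(\mu)\lesssim \iint\beta_{\mu,2}^2\,\Theta_\mu^1\,\frac{dr}{r}\,d\mu + \|\mu\|$, I would split the triple integral for $c^2(\mu)$ according to the scale $r\sim\max(|x-y|,|y-z|,|x-z|)$ and discretize $r$ dyadically. For a fixed pair $(y,z)$ with $|y-z|\sim r$, Fubini in $x$ and the Melnikov identity give
\[
\int \frac{\dist(x,L_{y,z})^2}{|x-y|^2\,|x-z|^2}\,d\mu(x) \lesssim \frac{1}{r^4}\int_{B(y,Cr)}\dist(x,L_{y,z})^2\,d\mu(x) + (\text{tail terms}),
\]
and the inner integral is bounded by $r^{3}\,\beta_{\mu,2}(y,Cr)^{2}$. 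Integrating now in $(y,z)$ over $|y-z|\sim r$ and summing over dyadic $r$, one uses linear growth once more to bound $\int d\mu(z)$ over $|y-z|\lesssim r$ by $r\,\Theta_\mu^1(y,Cr)$, which produces exactly the density factor in the right hand side. The tail terms from pairs with only one of the three distances $\sim r$ are handled by symmetry and the same scheme; the diagonal $x=y$ or $y=z$ contributes $\|\mu\|$ after using $\mu(B(y,r))\leq r$.

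The main obstacle is keeping track of the density factor $\Theta_\mu^1(x,r)$ on both sides, since $\mu$ is only assumed to have linear growth, not AD regularity: the natural computations produce powers of $\mu(B)/r$ that must be reduced to exactly one factor of $\Theta_\mu^1$, and this reduction is where the hypothesis $\mu(B(x,r))\leq r$ enters critically, both to absorb surplus density factors and to control the overlap of balls at different scales via Fubini. A secondary technical point is the handling of the two-point degenerate case where one pair of the triple is very close: there, Melnikov's identity blows up, but the linear growth yields a clean $\|\mu\|$ bound on that diagonal contribution.
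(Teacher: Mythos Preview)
Your plan is fundamentally different from the paper's. The paper does not manipulate Melnikov's identity directly at all; instead each inequality is obtained via a corona decomposition. For $c^2(\mu)\lesssim\iint\beta_{\mu,2}^2\,\Theta_\mu\,\frac{dr}{r}\,d\mu+\|\mu\|$ the paper uses the corona construction of Section~\ref{secprovam} (with $R_0=F=\supp\mu$), invokes the estimate $c^2(\mu)\lesssim\sum_{R\in\ttt}\Theta_\mu(2B_R)^2\mu(R)$ from \cite{Tolsa-delta}, and then applies the packing condition of Lemma~\ref{lemkey62*} with $p=1$. For the converse it imports the dyadic corona decomposition of \cite{Tolsa-bilip}, which attaches to each top cube $R$ an AD-regular curve $\Gamma_R$, and then controls $\beta_{\mu,2}(3Q)$ tree by tree by comparing $\mu$ with an absolutely continuous measure on $\Gamma_R$ (Lemma~\ref{repart}) and using the known $\beta$-estimates for AD-regular curves.

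Your sketch has a concrete error. In the ``lower bound'' you claim that $\int_{B(y,Cr)}\dist(x,L_{y,z})^2\,d\mu(x)\lesssim r^{3}\beta_{\mu,2}(y,Cr)^{2}$. This inequality is backwards: $\beta_{\mu,2}$ is an \emph{infimum} over lines, so any specific line $L_{y,z}$ yields a quantity that is at least $r^{3}\beta_{\mu,2}^{2}$, not at most. The usual repair is to bound $\dist(x,L_{y,z})\lesssim\dist(x,L_B)+\dist(y,L_B)+\dist(z,L_B)$ for the optimal $L_B$ at scale $r$, but this needs $|y-z|\sim r$, and the triples in which the smallest pairwise distance is much smaller than $r$ then require a separate (nontrivial) argument that you do not indicate.

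Your ``upper bound'' has a genuine gap. You say linear growth lets you replace $1/\mu(B(x,r))^2$ by $1/r^2$, but $\mu(B(x,r))\leq r$ gives $1/\mu(B(x,r))^2\geq 1/r^2$, which is the wrong direction. More substantively, averaging the competitor $L_{y,z}$ over well-separated pairs $(y,z)$ only produces a useful bound when such pairs carry mass $\gtrsim\mu(B(x,r))^2$, i.e., when $B(x,r)$ is \emph{balanced} in the sense of Section~\ref{secbalan}. For a measure with only linear growth this may fail at many $(x,r)$, and your assertion that those scales contribute merely $O(\|\mu\|)$ is precisely the difficult point --- it is what drives the paper to use a corona decomposition and to handle the unbalanced cubes separately via the $\UB$ family.
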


\vv
The notation $A\sim B$ means that
there is some fixed positive constant $c$ such that $c^{-1}A\leq B\leq c\,A$.

If $\mu$ is $1$-AD-regular, the result stated in Theorem \ref{teocurv-intro} was already known. In fact,  in this case 
one has the more precise estimate
$$c^2(\mu) \sim \iint_0^\infty \beta_{\mu,2}(x,r)^2\,\frac{dr}r\,d\mu(x).$$
That the double integral on the right hand side does not exceed $c^2(\mu)$ times some constant only depending on the $1$-AD-regularity of $\mu$ was proved by
Mattila, Melnikov and Verdera in \cite{MMV}. The converse inequality is essentially due to Peter Jones. The reader can find the proof of both estimates in Chapters 3 and 7 of \cite{Tolsa-llibre}, for example. For other analogous results for some higher dimensional versions of curvature, see the works \cite{LW1} and \cite{LW2} of Lerman and Whitehouse.
\vv

Recall that the analytic capacity of a compact subset $E\subset\C$ is defined by 
$$\gamma(E) =\sup|f'(\infty)|,$$
where the supremum is taken over all analytic functions $f:\C\setminus E\to \C$ with $\|f\|_{\infty}\leq1$ and
$f'(\infty)=\lim_{z\to \infty} z(f(z)-f(\infty)).$

The notion of analytic capacity was introduced by Ahlfors \cite{Ahlfors} in order to study the so called Pianlev\'e problem, which consists in characterizing the removable sets for bounded analytic functions in metric and geometric terms. He showed that a compact set $E\subset \C$ is
removable if and only if $\gamma(E)=0$, and thus he reduced the Painlev\'e problem to the metric-geometric characterization of sets with vanishing analytic capacity. From the description of analytic capacity in terms of curvature obtained in 
\cite{Tolsa-sem} and Theorem \ref{teocurv-intro} we get the following.

\begin{coro}\label{corogam}
Let $E\subset\C$ be compact. Then
$$\gamma(E)\sim\supp\mu(E),$$
where the supremum is taken over all Borel measures $\mu$ in $\C$ such that
$$\sup_{r>0}\Theta_\mu^1(x,r) +
\int_0^\infty \beta_{\mu,2}(x,r)^2\,\Theta_\mu^1(x,r)\,\frac{dr}r\leq1.$$ 
\end{coro}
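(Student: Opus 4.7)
The proof is obtained by combining the curvature‐based characterization of analytic capacity from \cite{Tolsa-sem},
\[\gamma(E)\sim \sup\{\mu(E) : \supp\mu\subset E,\ \mu(B(x,r))\leq r\ \forall x,r,\ c^2(\mu)\leq\mu(E)\},\]
with Theorem~\ref{teocurv-intro}, which translates between the curvature $c^2(\mu)$ and the double integral $\iint \beta_{\mu,2}^2 \,\Theta_\mu^1 \,\frac{dr}{r}\, d\mu$. The two directions of the corollary correspond to the two directions of Theorem~\ref{teocurv-intro}.

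For the upper bound $\sup\mu(E)\lesssim\gamma(E)$, let $\mu$ be admissible in the corollary's supremum. The first term of the pointwise condition yields the linear growth $\mu(B(x,r))\leq r$, while integrating the full condition against $\mu$ gives
\[\iint \beta_{\mu,2}(x,r)^2\,\Theta_\mu^1(x,r)\,\frac{dr}{r}\,d\mu(x)\leq \mu(E).\]
Theorem~\ref{teocurv-intro} then provides $c^2(\mu)\lesssim \mu(E)$, and after rescaling $\mu$ by an absolute constant (which preserves linear growth), the rescaled measure becomes admissible in Tolsa's supremum, so $\gamma(E)\gtrsim \mu(E)$.

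For the lower bound $\gamma(E)\lesssim \sup\mu(E)$, pick $\mu_0$ admissible in Tolsa's supremum with $\mu_0(E)\gtrsim \gamma(E)$. Theorem~\ref{teocurv-intro} now gives
\[\iint \beta_{\mu_0,2}(x,r)^2\,\Theta_{\mu_0}^1(x,r)\,\frac{dr}{r}\,d\mu_0(x)\;\lesssim\; c^2(\mu_0)+\|\mu_0\|\;\lesssim\; \mu_0(E),\]
so by Chebyshev there is a Borel set $F\subset E$ with $\mu_0(F)\geq \mu_0(E)/2$ on which $\int_0^\infty \beta_{\mu_0,2}^2\,\Theta_{\mu_0}^1\,\frac{dr}{r}\leq M$ for some absolute $M$. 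Set $\tilde\mu=\mu_0|_F/c$ with $c$ to be chosen large. Restriction does not increase $\beta_{\mu,2}^2$ or $\Theta_\mu^1$, and the scaling factor $1/c$ yields $\Theta^1_{\tilde\mu}\leq 1/c$ and $\beta_{\tilde\mu,2}^2\,\Theta_{\tilde\mu}^1\leq \beta_{\mu_0,2}^2\,\Theta_{\mu_0}^1/c^2$. Choosing $c$ so large that $1/c+M/c^2\leq 1$, the measure $\tilde\mu$ is admissible in the corollary's supremum and $\tilde\mu(E)=\mu_0(F)/c\gtrsim \gamma(E)$.

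\emph{Main obstacle.} The one nontrivial step is the Chebyshev truncation—upgrading the integrated bound from Theorem~\ref{teocurv-intro} to the pointwise bound required by the corollary. This costs a constant factor in mass, but thanks to the different scalings ($1/c$ for the density, $1/c^2$ for the beta integral) under the rescaling $\mu_0\to \mu_0|_F/c$, the loss can be absorbed by choosing $c$ large enough without any further damage.
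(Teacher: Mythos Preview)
Your proof is correct and follows essentially the same route as the paper's: both combine the curvature characterization of $\gamma$ from \cite{Tolsa-sem} with Theorem~\ref{teocurv-intro}, and both handle the passage from the integrated $\beta$-bound to the pointwise one via Chebyshev followed by restriction and rescaling. Your explicit tracking of the $1/c$ versus $1/c^2$ scalings is exactly the content of the paper's phrase ``it easily follows that the measure $\nu=c\,\mu|_F$ satisfies\ldots''.
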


\vv

The plan of the paper is the following. In Section \ref{secdyad} we recall the properties of the dyadic lattice of David and
Mattila mentioned above. In Section \ref{secbalan} we introduce the so called balanced balls and we prove
some technical results about them which will be necessary for the construction of the corona decomposition.
We state the Main Lemma \ref{mainlemma} in Section \ref{secmlemma}. This is the essential ingredient for the construction
of the corona decomposition in the subsequent Section \ref{secprovam}. Theorem \ref{teo1} is proved in the
same Section \ref{secprovam} by using this corona decomposition. On the other hand, the proof of Main Lemma \ref{mainlemma} is deferred to Section \ref{secfi}. 
In Section \ref{secrem} we explain that the 
assumption \rf{eqjones*} in Theorem \ref{teo1} can be weakened by multiplying the integrand $\beta_{2,\mu}(x,r)^2$ by any power of the density 
$\mu(B(x,r))/r^n$, which we then use to prove Theorem \ref{teocurv-intro}. This proof will rely heavily on the proofs and results contained in \cite{Tolsa-bilip} and \cite{Tolsa-delta}, and we recommend reading this section with these papers on hand as references.

\vv

In this paper  we will use the letters $c,C$ to denote
absolute constants which may change their values at different
occurrences. On the other hand, constants with subscripts, such as $c_1$, do not change their values
at different occurrences.

The notation $A\lesssim B$ means that
there is some fixed constant $c$ such that $A\leq c\,B$. So $A\sim B$ is equivalent to $A\lesssim B\lesssim A$. 
If we want to write explicitely 
the dependence on some constants $c_1$ of the relationship such as ``$\lesssim$'', we will write
 $A\lesssim_{c_1} B$. \\


\section{The dyadic lattice of cubes with small boundaries}\label{secdyad}
\vv

We will use the dyadic lattice of cubes
with small boundaries constructed by David and Mattila in \cite[Theorem 3.2]{David-Mattila}.
The properties of this dyadic lattice are summarized in the next lemma.
\vv

\begin{lemma}[David, Mattila]
\label{lemcubs}
Let $\mu$ be a Radon measure on $\R^d$, $E=\supp\mu$, and consider two constants $C_0>1$ and $A_0>5000\,C_0$. Then there exists a sequence of partitions of $E$ into
Borel subsets $Q$, $Q\in \DD_k$, with the following properties:
\begin{itemize}
\item For each integer $k\geq0$, $E$ is the disjoint union of the ``cubes'' $Q$, $Q\in\DD_k$, and
if $k<l$, $Q\in\DD_l$, and $R\in\DD_k$, then either $Q\cap R=\varnothing$ or else $Q\subset R$.
\vv

\item The general position of the cubes $Q$ can be described as follows. For each $k\geq0$ and each cube $Q\in\DD_k$, there is a ball $B(Q)=B(z_Q,r(Q))$ such that
$$z_Q\in E, \qquad A_0^{-k}\leq r(Q)\leq C_0\,A_0^{-k},$$
$$E\cap B(Q)\subset Q\subset E\cap 28\,B(Q)=E \cap B(z_Q,28r(Q)),$$
and
$$\mbox{the balls $5B(Q)$, $Q\in\DD_k$, are disjoint.}$$

\vv
\item The cubes $Q\in\DD_k$ have small boundaries. That is, for each $Q\in\DD_k$ and each
integer $l\geq0$, set
$$N_l^{ext}(Q)= \{x\in E\setminus Q:\,\dist(x,Q)< A_0^{-k-l}\},$$
$$N_l^{int}(Q)= \{x\in Q:\,\dist(x,E\setminus Q)< A_0^{-k-l}\},$$
and
$$N_l(Q)= N_l^{ext}(Q) \cup N_l^{int}(Q).$$
Then
\begin{equation}\label{eqsmb2}
\mu(N_l(Q))\leq (C^{-1}C_0^{-3d-1}A_0)^{-l}\,\mu(90B(Q)).
\end{equation}
\vv

\item Denote by $\DD_k^{db}$ the family of cubes $Q\in\DD_k$ for which
\begin{equation}\label{eqdob22}
\mu(100B(Q))\leq C_0\,\mu(B(Q)).
\end{equation}
We have that $r(Q)=A_0^{-k}$ when $Q\in\DD_k\setminus \DD_k^{db}$
and
\begin{equation}\label{eqdob23}
\mu(100B(Q))\leq C_0^{-l}\,\mu(100^{l+1}B(Q))\quad
\mbox{for all $l\geq1$ such that $100^l\leq C_0$ and $Q\in\DD_k\setminus \DD_k^{db}$.}
\end{equation}
\end{itemize}
\end{lemma}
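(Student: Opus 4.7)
The statement is David--Mattila \cite[Theorem 3.2]{David-Mattila}, so the plan is to reconstruct the main ideas of that construction. The starting point is Christ's construction of dyadic ``cubes'' on a space of homogeneous type, refined in two ways: the boundaries of the cubes are chosen to have small measure, and the cubes are classified as doubling or not. First I would choose, for every integer $k \geq 0$, a maximal $A_0^{-k}$-separated net $X_k \subset E$, in a nested fashion with $X_k \subset X_{k+1}$ (built inductively: at scale $k+1$, extend $X_k$ greedily to a maximal $A_0^{-k-1}$-separated set). These nets supply the centers $z_Q \in E$ and the radius bounds $A_0^{-k} \leq r(Q) \leq C_0 A_0^{-k}$. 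The inclusions $E\cap B(Q)\subset Q\subset E\cap 28B(Q)$ and the disjointness of the family $\{5B(Q)\}_{Q\in\DD_k}$ are immediate from the $A_0^{-k}$-separation combined with $A_0 \gg C_0$.

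The genuinely delicate property is the small-boundary estimate \rf{eqsmb2}. The idea is to give oneself freedom in where the boundary of each cube lies: instead of a pure Voronoi decomposition, define $Q$ by cutting out a neighbourhood of each center $z_Q$ whose outer radius may be slid within a range of concentric annuli. Any point $x$ sitting near $\partial Q$ at scale $A_0^{-k-l}$ is necessarily close to the bisector between $z_Q$ and a neighbouring center $z_{Q'}$ at scale $k$, and neighbours are bounded in number by a dimensional constant. Hence controlling $\mu(N_l(Q))$ reduces to controlling the $\mu$-mass in thin annuli around centers. A pigeonhole/averaging argument over the available choices of radii at each pair of scales $(k, k+l)$ selects a choice whose associated annulus has $\mu$-mass at most a factor $(C^{-1} C_0^{-3d-1} A_0)^{-l}\mu(90B(Q))$. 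The whole family must be chosen coherently across scales, which is why one works with the nested nets $X_k$ from the outset and performs the averaging at all scales simultaneously; this is the main technical obstacle.

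Finally I would dispose of the doubling dichotomy. Declare $Q \in \DD_k^{db}$ precisely when $\mu(100 B(Q)) \leq C_0 \mu(B(Q))$. For cubes that fail this, I set $r(Q) = A_0^{-k}$ by fiat (there is no conflict, since the other properties only need $r(Q)$ to lie in $[A_0^{-k}, C_0 A_0^{-k}]$). If $Q \in \DD_k \setminus \DD_k^{db}$ and $100^l \leq C_0$, then the ball $100^j B(Q)$ is still small enough (relative to the ambient scale $C_0 A_0^{-(k-1)}$) that it could be taken as the ball of a parent or ancestor cube in the lattice; then the failure of \rf{eqdob22} at each such intermediate scale, iterated $l$ times, yields \rf{eqdob23}. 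Overall: the net construction and the containment/disjointness statements are routine; the work lies almost entirely in the averaging argument that produces \rf{eqsmb2}, with the doubling dichotomy a bookkeeping afterthought.
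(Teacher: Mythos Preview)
The paper does not prove this lemma at all; it is quoted verbatim as \cite[Theorem~3.2]{David-Mattila} and used as a black box. Your outline of the nested-net construction and the averaging argument behind the small-boundary estimate \rf{eqsmb2} is a fair summary of the David--Mattila strategy, and in that sense it matches what the paper relies on.

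There is, however, a real confusion in your treatment of the doubling dichotomy. You cannot first build the cubes, then ``declare'' which are doubling, and only afterwards set $r(Q)=A_0^{-k}$ ``by fiat'' for the non-doubling ones: the choice of $r(Q)$ and the doubling classification are made simultaneously. In the actual construction one tests, for each center $z_Q$, the candidate radii $100^j A_0^{-k}$ for $j=0,1,\dots$ up to $C_0 A_0^{-k}$, and takes the smallest one for which \rf{eqdob22} holds; if none does, one takes $r(Q)=A_0^{-k}$. The point is that $Q\notin\DD_k^{db}$ then records the failure of doubling at \emph{every} tested radius, and telescoping those failures is exactly what yields \rf{eqdob23}. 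Your proposed derivation of \rf{eqdob23} --- invoking ``the failure of \rf{eqdob22} at each intermediate scale'' via balls of ancestor cubes --- does not work: the balls $100^j B(Q)$ are all centered at $z_Q$, not at ancestor centers, and the single inequality $\mu(100B(Q))>C_0\mu(B(Q))$ is not by itself enough to iterate.
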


\vv
We use the notation $\DD=\bigcup_{k\geq0}\DD_k$. For $Q\in\DD$, we set $\DD(Q) =
\{P\in\DD:P\subset Q\}$.
Given $Q\in\DD_k$, we denote $J(Q)=k$. We set
$\ell(Q)= 56\,C_0\,A_0^{-k}$ and we call it the side length of $Q$. Note that 
$$\frac1{28}\,C_0^{-1}\ell(Q)\leq \diam(Q)\leq\ell(Q).$$
Observe that $r(Q)\sim\diam(Q)\sim\ell(Q)$.
Also we call $z_Q$ the center of $Q$, and the cube $Q'\in \DD_{k-1}$ such that $Q'\supset Q$ the parent of $Q$.
 We set
$B_Q=28 \,B(Q)=B(z_Q,28\,r(Q))$, so that 
$$E\cap \tfrac1{28}B_Q\subset Q\subset B_Q.$$

We assume $A_0$ big enough so that the constant $C^{-1}C_0^{-3d-1}A_0$ in 
\rf{eqsmb2} satisfies 
$$C^{-1}C_0^{-3d-1}A_0>A_0^{1/2}>10.$$
Then we deduce that, for all $0<\lambda\leq1$,
\begin{equation}\label{eqfk490}
\mu\bigl(\{x\in Q:\dist(x,E\setminus Q)\leq \lambda\,\ell(Q)\}\bigr) + 
\mu\bigl(\bigl\{x\in 4B_Q:\dist(x,Q)\leq \lambda\,\ell(Q)\}\bigr)\leq
c\,\lambda^{1/2}\,\mu(3.5B_Q).
\end{equation}

We denote
$\DD^{db}=\bigcup_{k\geq0}\DD_k^{db}$ and $\DD^{db}(Q) = \DD^{db}\cap \DD(Q)$.
Note that, in particular, from \rf{eqdob22} it follows that
$$\mu(100B(Q))\leq C_0\,\mu(Q)\qquad\mbox{if $Q\in\DD^{db}.$}$$
For this reason we will call the cubes from $\DD^{db}$ doubling. 

As shown in \cite[Lemma 5.28]{David-Mattila}, any cube $R\in\DD$ can be covered $\mu$-a.e.\
by a family of doubling cubes:
\vv

\begin{lemma}\label{lemcobdob}
Let $R\in\DD$. Suppose that the constants $A_0$ and $C_0$ in Lemma \ref{lemcubs} are
chosen suitably. Then there exists a family of
doubling cubes $\{Q_i\}_{i\in I}\subset \DD^{db}$, with
$Q_i\subset R$ for all $i$, such that their union covers $\mu$-almost all $R$.
\end{lemma}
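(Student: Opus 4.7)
The plan is to show that the exceptional set
$$F:=R\setminus\bigcup_{Q\in\DD^{db}(R)}Q$$
satisfies $\mu(F)=0$; the desired family $\{Q_i\}$ is then the collection of maximal cubes in $\DD^{db}(R)$, which are automatically pairwise disjoint (by the tree structure of $\DD$) and whose union covers $R\setminus F$. If $R\in\DD^{db}$, the lemma is immediate with $\{R\}$, so I assume $R\notin\DD^{db}$. For each $x\in F$, the nested chain $(Q_k(x))_{k\geq J(R)}$ of dyadic ancestors of $x$ contained in $R$ must consist entirely of \emph{non-doubling} cubes, since any doubling member of the chain would by definition cover $x$. By Lemma~\ref{lemcubs} these cubes satisfy $r(Q_k(x))=A_0^{-k}$.

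The heart of the argument is an iterated application of \rf{eqdob23}. Fix $l=\lfloor\log_{100}C_0\rfloor$ (which is $\geq 1$ once $C_0\geq 100$), so that for every non-doubling $Q\in\DD_n$,
$$\mu(100\,B(Q))\leq C_0^{-l}\,\mu(100^{l+1}B(Q)).$$
A direct geometric check, using $z_Q\in Q\subset 28\,B(P(Q))$ (so $|z_Q-z_{P(Q)}|\leq 28\,r(P(Q))$), together with $r(Q)=A_0^{-n}$, $r(P(Q))\geq A_0^{-(n-1)}$, and the hypothesis $A_0>5000C_0$, shows that $100^{l+1}B(Q)\subset 100\,B(P(Q))$, where $P(Q)$ denotes the dyadic parent of $Q$. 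Combining these two facts yields the crucial one-step contraction
$$\mu(100\,B(Q))\leq C_0^{-l}\,\mu(100\,B(P(Q))).$$

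For each $n\geq J(R)$, let $\mathcal{G}_n\subset\DD_n$ consist of those cubes all of whose ancestors inside $R$ are non-doubling, and set $G_n=\bigcup_{Q\in\mathcal{G}_n}Q$. The sets $G_n$ are nested downward and $F\subset\bigcap_n G_n$. Set $S_n=\sum_{Q\in\mathcal{G}_n}\mu(100\,B(Q))$. Grouping the sum according to parents $P\in\mathcal{G}_{n-1}$ and using that the balls $\{5B(Q)\}_{Q\in\DD_n}$ are disjoint (which, via a volume comparison, gives a bounded overlap $N\lesssim_d 100^{(l+1)d}$ for the balls $100^{l+1}B(Q)$ among children of a common parent), I obtain
$$S_n\leq C_0^{-l}\,N\,S_{n-1}\leq (C_0^{-l}N)^{n-J(R)}\,\mu(100\,B(R)),$$
and $\mu(F)\leq\mu(G_n)\leq S_n$. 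Thus $\mu(F)=0$ as soon as $C_0^{-l}N<1$. The main obstacle, and the content of the qualifier ``$A_0$ and $C_0$ chosen suitably'', is precisely this strict inequality: since $N\lesssim_d 100^{(l+1)d}$ while $l\approx\log_{100}C_0$, the product $C_0^{-l}N=C(d)\,100^d(100^d/C_0)^l$ tends to $0$ as $C_0\to\infty$, so fixing $C_0$ large enough in terms of $d$ (and then $A_0>5000C_0$) closes the argument.
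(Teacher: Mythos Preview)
The paper does not prove this lemma itself; it simply cites \cite[Lemma 5.28]{David-Mattila}. Your direct argument is essentially correct and in the right spirit. Two small points are worth flagging.

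First, a minor imprecision: your family $\mathcal{G}_n$ should also require $Q$ \emph{itself} to be non-doubling, not only its strict ancestors. The one-step contraction $\mu(100B(Q))\leq C_0^{-l}\mu(100^{l+1}B(Q))$ comes from \rf{eqdob23}, which applies only to $Q\in\DD_k\setminus\DD_k^{db}$. With this adjustment everything goes through, and the inclusion $F\subset G_n$ still holds since for $x\in F$ every cube $Q_k(x)$ in the chain is non-doubling (as you yourself observe in the preceding paragraph).

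Second, your iterated one-step contraction is in effect a rederivation of what the paper records as Lemma~\ref{lemcad22} (also taken from David--Mattila): if all intermediate cubes between $Q$ and $R$ are non-doubling then $\mu(100B(Q))\leq A_0^{-10n(J(Q)-J(R)-1)}\mu(100B(R))$. One could equally well quote that estimate and then run your bounded-overlap argument at each level $n$ to pass from a pointwise decay to $\mu(G_n)\to 0$. Either way, the two geometric inputs --- the overlap bound $N\lesssim_d 100^{(l+1)d}$ (using the exact radius $r(Q)=A_0^{-n}$ for non-doubling $Q$ and the disjointness of the balls $5B(Q)$) and the containment $100^{l+1}B(Q)\subset 100B(P(Q))$ --- are correct as you state them, and the final product $C_0^{-l}N\lesssim_d 100^{d}(100^{d}/C_0)^{l}$ does tend to $0$ as $C_0\to\infty$.
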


The following result is proved in \cite[Lemma 5.31]{David-Mattila}.
\vv

\begin{lemma}\label{lemcad22}
Let $R\in\DD$ and let $Q\subset R$ be a cube such that all the intermediate cubes $S$,
$Q\subsetneq S\subsetneq R$ are non-doubling (i.e.\ belong to $\bigcup_{k\geq0}\DD_k\setminus \DD_k^{db}$).
Then
\begin{equation}\label{eqdk88}
\mu(100B(Q))\leq A_0^{-10n(J(Q)-J(R)-1)}\mu(100B(R)).
\end{equation}
\end{lemma}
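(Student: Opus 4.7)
I would prove this by iterating the non-doubling estimate (\ref{eqdob23}) along the chain of ancestors from $Q$ to $R$. Write $Q = Q_0 \subsetneq Q_1 \subsetneq \cdots \subsetneq Q_m = R$ with $Q_{i+1}$ the parent of $Q_i$, so $m = J(Q)-J(R)$. By hypothesis the intermediate cubes $Q_1,\dots,Q_{m-1}$ are all non-doubling, and hence by Lemma \ref{lemcubs} each satisfies $r(Q_i) = A_0^{-J(Q_i)}$ and the decay bound (\ref{eqdob23}).

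The one-step contraction goes as follows. For each $1 \leq i \leq m-1$, I apply (\ref{eqdob23}) with the largest admissible exponent $l = \lfloor \log_{100} C_0 \rfloor$, getting
$$\mu(100 B(Q_i)) \leq C_0^{-l}\,\mu(100^{l+1} B(Q_i)).$$
Then I check the geometric inclusion $100^{l+1} B(Q_i) \subset 100 B(Q_{i+1})$: since $z_{Q_i} \in Q_i \subset B_{Q_{i+1}} = 28 B(Q_{i+1})$, the centres differ by at most $28 r(Q_{i+1})$, so the inclusion reduces to $28 r(Q_{i+1}) + 100^{l+1} r(Q_i) \leq 100 r(Q_{i+1})$. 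Using $100^{l+1} \leq 100 C_0$, $r(Q_i) = A_0^{-J(Q_i)}$, and $r(Q_{i+1}) \geq A_0 \cdot r(Q_i)$, this follows from the standing hypothesis $A_0 > 5000 C_0$. For the bottom step $i = 0$ the same geometric reasoning (without invoking (\ref{eqdob23})) yields the weaker inclusion $100 B(Q_0) \subset 100 B(Q_1)$, giving $\mu(100 B(Q_0)) \leq \mu(100 B(Q_1))$, whether or not $Q_0 = Q$ is itself doubling.

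Chaining the $m-1$ non-doubling contractions with the trivial step at the bottom yields
$$\mu(100 B(Q)) \leq C_0^{-l(m-1)}\,\mu(100 B(R)).$$
To convert this into the stated $A_0^{-10n(m-1)}$ decay I would invoke the inequality $C_0^l \geq A_0^{10n}$, which with $l \approx \log_{100} C_0$ amounts to $(\log C_0)^2 \gtrsim_n \log A_0$. This is precisely the (mild) quantitative relation between the parameters $C_0$ and $A_0$ that is built into the setup of Lemma \ref{lemcubs} (see \cite{David-Mattila}). The only technical point I foresee is this parameter bookkeeping: one must pick $l$ respecting the ceiling $100^l \leq C_0$ in (\ref{eqdob23}) while still extracting enough decay per step to dominate the factor $A_0$ that enters on passing from a non-doubling child to its parent. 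Since $C_0$ may be taken as a sufficiently large (and $n$-dependent) absolute constant in the David--Mattila construction, this is automatic.
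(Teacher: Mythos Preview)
Your proposal is correct and follows the natural approach: iterate the one-step non-doubling decay \eqref{eqdob23} along the chain from $Q$ up to $R$, using the elementary inclusion $100^{l+1}B(Q_i)\subset 100B(Q_{i+1})$ at each step. Note that the paper does not actually supply a proof here; it simply cites \cite[Lemma~5.31]{David-Mattila}, and your argument is essentially a reconstruction of that proof.

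One small remark on the parameter bookkeeping you flag at the end. The inequality you need, $C_0^{l}\geq A_0^{10n}$ with $l=\lfloor\log_{100}C_0\rfloor$, is an \emph{upper} bound on $A_0$ in terms of $C_0$, whereas elsewhere in the paper (e.g.\ the requirement $C^{-1}C_0^{-3d-1}A_0>A_0^{1/2}$ after \eqref{eqsmb2}) one needs $A_0\gtrsim C_0^{6d+2}$, a \emph{lower} bound. These are compatible provided $C_0$ is first taken extremely large (of order $100^{10n(6d+2)}$, say) and then $A_0$ is chosen in the resulting window. This is exactly what the remark following the lemma statement alludes to: the exponent $10$ is not canonical but reflects a particular admissible choice of $(C_0,A_0)$, and can be replaced by any positive constant at the cost of adjusting those parameters. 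So your sketch is sound, but the phrase ``this is automatic'' understates the care required in ordering the choices of $C_0$ and $A_0$.
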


Let us remark that the constant $10$ in \rf{eqdk88} can be replaced by any other positive 
constant if $A_0$ and $C_0$ are chosen suitably in Lemma \ref{lemcubs}, as shown in (5.30) of
\cite{David-Mattila}.

Given a ball $B\subset \R^d$, we consider its $n$-dimensional density:
$$\Theta_\mu(B)= \frac{\mu(B)}{r(B)^n}.$$
We will also write $\Theta_\mu(x,r)$ instead of $\Theta_\mu(B(x,r))$.

From the preceding lemma we deduce:

\vv
\begin{lemma}\label{lemcad23}
Let $Q,R\in\DD$ be as in Lemma \ref{lemcad23}.
Then
$$\Theta_\mu(100B(Q))\leq C_0\,A_0^{-9n(J(Q)-J(R)-1)}\,\Theta_\mu(100B(R))$$
and
$$\sum_{S\in\DD:Q\subset S\subset R}\Theta_\mu(100B(S))\leq c\,\Theta_\mu(100B(R)),$$
with $c$ depending on $C_0$ and $A_0$.
\end{lemma}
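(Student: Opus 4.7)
The first inequality is essentially a translation of Lemma \ref{lemcad22} from mass to density. Recall from Lemma \ref{lemcubs} that $A_0^{-J(Q)} \leq r(Q) \leq C_0 A_0^{-J(Q)}$, so
$$\frac{r(R)^n}{r(Q)^n} \leq C_0^n \, A_0^{n(J(Q)-J(R))}.$$
Dividing the inequality \rf{eqdk88} by $(100\,r(Q))^n$ and multiplying/dividing by $(100\,r(R))^n$ on the right, one obtains
$$\Theta_\mu(100B(Q)) \leq A_0^{-10n(J(Q)-J(R)-1)} \cdot C_0^n \, A_0^{n(J(Q)-J(R))} \, \Theta_\mu(100B(R)).$$
Setting $k = J(Q)-J(R)-1 \geq 0$, the exponent of $A_0$ becomes $-10nk + n(k+1) = -9nk + n$. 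Thus, as remarked after Lemma \ref{lemcad22}, the constant $10$ in \rf{eqdk88} is flexible (it can be made any positive constant by choosing $A_0, C_0$ suitably), so by carrying this margin in the exponent from the start one absorbs the factor $C_0^n A_0^n$ into a clean bound of the form $C_0 \, A_0^{-9n(J(Q)-J(R)-1)}\,\Theta_\mu(100B(R))$. This gives the first inequality.

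For the second inequality, I would observe that since the cubes of $\DD$ form a nested family, for each $j$ with $J(R) \leq j \leq J(Q)$ there is a unique cube $S_j \in \DD_j$ with $Q \subset S_j \subset R$; moreover, if $S_j \neq R, Q$, all cubes strictly between $S_j$ and $R$ are themselves intermediate between $Q$ and $R$, hence non-doubling. Therefore $S_j$ satisfies the hypothesis of the first inequality (applied with $Q$ replaced by $S_j$), giving
$$\Theta_\mu(100B(S_j)) \leq C_0 \, A_0^{-9n(j-J(R)-1)} \, \Theta_\mu(100B(R))$$
for every $j \geq J(R)+1$, while the case $j = J(R)$ contributes exactly $\Theta_\mu(100B(R))$. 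Summing,
$$\sum_{S:\,Q\subset S\subset R} \Theta_\mu(100B(S)) \leq \Theta_\mu(100B(R)) \left( 1 + C_0 \sum_{k=0}^{\infty} A_0^{-9nk} \right) \leq c \, \Theta_\mu(100B(R)),$$
with $c$ depending on $C_0$ and $A_0$ (through the sum of the geometric series $\frac{1}{1-A_0^{-9n}}$, which converges because $A_0$ is taken large).

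The only genuine subtlety is bookkeeping the constant $10$ versus $9$ in the exponent: one must exploit the freedom in Lemma \ref{lemcad22} (adjusting $A_0, C_0$) to trade a little decay for the extra factor $C_0^n A_0^n$ that arises when passing from mass to density. Everything else is just the nesting structure of $\DD$ and a geometric series, so I do not expect any real obstacle.
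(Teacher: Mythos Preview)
Your proposal is correct and follows essentially the same approach as the paper: divide \rf{eqdk88} by $r(100B(Q))^n$, use the radius ratio $r(B(R))^n/r(B(Q))^n \leq C_0^n A_0^{n(J(Q)-J(R))}$, and then absorb the resulting factor $C_0^n A_0^n$ by exploiting the flexibility of the exponent $10$ noted after Lemma~\ref{lemcad22}. The paper's proof is terser (it simply says the second inequality is ``an immediate consequence of the first one''), whereas you spell out the geometric series, but the substance is identical.
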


\begin{proof}
By \rf{eqdk88},
\begin{align*}
\Theta_\mu(100B(Q))&\leq A_0^{-10n(J(Q)-J(R)-1)}\,\frac{\mu(100B(R))}{r(100B(Q))^n} \\& = 
A_0^{-10n(J(Q)-J(R)-1)}\Theta_\mu(100B(R))\,\frac{r(B(R))^n}{r(B(Q))^n}.
\end{align*}
The first inequality in the lemma follows from this estimate and the fact that
$r(B(R))\leq C_0\,A_0^{(J(Q)-J(R))}\,r(B(Q))$.

The second inequality in the lemma is an immediate consequence of the first one.
\end{proof}

\vv
From now on we will assume that $C_0$ and $A_0$ are some big fixed constants so that the
results stated in the lemmas of this section hold.

\vv


\section{Balanced balls}\label{secbalan}
\vv

\begin{lemma}\label{lembalan}
Let $\mu$ be a Radon measure in $\R^d$, and let $B\subset \R^d$ be some ball with radius $r>0$ such that $\mu(B)>0$.
Let $0<t<1$ and $0<\gamma<1$. Then there exists some constant $\ve=\ve(t)>0$ such that one of following alternatives holds:
\begin{itemize}
\item[(a)] There are points $x_0,x_1,\ldots,x_n\in B$ such that 
$$\mu(B(x_k,tr)\cap B)\geq \ve\,\mu(B) \qquad\mbox{for $0\leq k\leq n$,}$$
and if $L_k$ stands for the $k$-plane that passes through $x_0,x_1,\ldots,x_k$, then
$$\dist(x_k,L_{k-1}) \geq \gamma\,r\qquad\mbox{for $1\leq k\leq n$.}$$

\item[(b)] There exists a family of balls $\{B_i\}_{i\in I_B}$, with radii $r(B_i)=4\gamma r$, centered on $B$, so that
the balls $\{10B_i\}_{i\in I_B}$ are pairwise disjoint, 
$$\sum_{i\in I_B}\mu(B_i)\gtrsim \mu(B),$$
and 
$$\Theta_\mu(B_i)\gtrsim \gamma^{-1}\,\Theta_\mu(B)\quad \mbox{ for all $i\in I_B$}.$$
\end{itemize}
\end{lemma}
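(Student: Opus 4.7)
The plan is to attempt a greedy construction of the points $x_0,\dots,x_n$ of alternative (a); if the construction halts prematurely, most of $\mu|_B$ is forced into a thin slab around a low-dimensional affine subspace, from which alternative (b) follows by a covering plus pigeonhole argument.

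First, I would fix a covering of $B$ by $N(t)\lesssim t^{-d}$ balls of radius $tr$ centred at points of $B$ and set $\ve=1/(4N(t))$, so $\ve$ depends only on $t$. Pigeonholing on this covering produces $x_0\in B$ with $\mu(B(x_0,tr)\cap B)\geq\ve\mu(B)$. Inductively, once $x_0,\dots,x_{k-1}$ have been chosen, let $L_{k-1}$ be their affine span and $T_{k-1}=\{y\in B:\dist(y,L_{k-1})<\gamma r\}$; try to choose $x_k\in B\setminus T_{k-1}$ with $\mu(B(x_k,tr)\cap B)\geq\ve\mu(B)$. If successful for every $k\leq n$ we are in case (a). Otherwise the process halts at some $1\leq k\leq n$, in which case each of the $N(t)$ balls of radius $tr$ covering $B\setminus T_{k-1}$ carries mass $<\ve\mu(B)$, so $\mu(B\setminus T_{k-1})\leq N(t)\ve\mu(B)=\mu(B)/4$ and hence $\mu(T_{k-1}\cap B)\geq 3\mu(B)/4$.

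To produce the family in case (b), tile $\R^d$ by closed cubes of side $s=4\gamma r/\sqrt d$, so that any ball of radius $4\gamma r$ centred at a point of such a cube contains the whole cube, and let $\{Q_j\}$ be those cubes meeting $T_{k-1}\cap B$. Since $T_{k-1}$ lies in the $\gamma r$-neighbourhood of a $(k-1)$-plane intersected with a ball of radius $r$, projecting to $L_{k-1}$ gives $\#\{Q_j\}\lesssim_d\gamma^{-(k-1)}$. For each $Q_j$ pick $y_j\in Q_j\cap T_{k-1}\cap B$; split the cube lattice into $N_0^d$ cosets, with $N_0=N_0(d)$ chosen large enough that any $y_i,y_j$ from cubes in the same coset satisfy $|y_i-y_j|>80\gamma r$; and select by pigeonhole the coset $\Lambda$ carrying the most mass, so $\sum_{Q_j\in\Lambda}\mu(Q_j\cap B)\gtrsim_d\mu(T_{k-1}\cap B)\gtrsim\mu(B)$. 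Setting $B_i:=B(y_j,4\gamma r)$ for $Q_j\in\Lambda$ gives balls centred on $B$ with pairwise disjoint $10$-dilations, and $Q_j\subset B_i$ yields $\mu(B_i)\geq\mu(Q_j\cap B)$. Finally, discard those $B_i$ with $\mu(B_i)<c_1\gamma^{n-1}\mu(B)$ for a small $c_1=c_1(d)$: the discarded mass is at most
\[
\#\{Q_j\}\cdot c_1\gamma^{n-1}\mu(B)\lesssim_d c_1\gamma^{n-k}\mu(B)\leq c_1\mu(B),
\]
and for $c_1$ small this is absorbed in a small fraction of what was collected. The surviving balls then satisfy $\Theta_\mu(B_i)\geq c_1\gamma^{n-1}\mu(B)/(4\gamma r)^n\sim\gamma^{-1}\Theta_\mu(B)$, as required.

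The key delicate point is the balance of $\gamma$-exponents in the thresholding step: the covering count $\gamma^{-(k-1)}$ and the density threshold $\gamma^{n-1}$ combine to the bounded factor $\gamma^{n-k}\leq 1$ precisely because $k\leq n$. This is exactly why the greedy procedure is stopped at $x_n$: were the process allowed to fail at some $k>n$, the thresholding estimate would blow up and alternative (b) would fail. A secondary bookkeeping issue is to keep $\ve$ dependent only on $t$ (not on $\gamma$ or $n$), which is handled by using the single fixed $tr$-covering of $B$ throughout the induction.
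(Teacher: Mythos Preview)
Your proposal is correct and follows essentially the same strategy as the paper's proof: greedily select points, and if the selection fails at some step $k\le n$, show that most of the mass lies in the $\gamma r$-neighbourhood of a $(k-1)$-plane, then threshold on a covering of this slab by $\lesssim\gamma^{-(k-1)}$ pieces of scale $\sim\gamma r$ to produce the high-density family. The only differences are implementation choices---you tile by cubes and separate via a coset partition, whereas the paper covers directly by balls of radius $4\gamma r$ and extracts the separated subfamily by a Vitali-type covering argument---but the key exponent balance $\gamma^{-(k-1)}\cdot\gamma^{n-1}\le 1$ is identical.
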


Note that the constant $\ve$ above depends on $t$ but not on $\gamma$.

\begin{proof}
 We choose $x_0,x_1,\ldots,x_n$ inductively as follows. First we take $x_0\in B$ such
$$\mu(B(x_0,tr))\geq \frac12 \,s_0,$$
where 
$$s_0= \sup_{x\in B} \mu(B(x,tr)).$$
If $x_0,\ldots, x_{k-1}$ have been chosen, we take $x_k\in B$ such that $\dist(x_k,L_{k-1}) \geq\gamma \,r$ (where 
$L_{k-1}$ is a plane that passes through $x_0,\ldots,x_{k-1}$) and
$$\mu(B(x_k,tr))\geq \frac12 \,s_k,$$
where
$$s_k= \sup_{x\in B:\dist(x,L_{k-1})\geq\gamma r} \mu(B(x,tr)).$$

If $s_k\geq 2\ve\,\mu(B)$ for all $0\leq k\leq n$ (where $\ve$ will be fixed below), then the alternative (a) in the lemma holds.
Otherwise, let $k_0$ be such that $s_{k_0}< 2\ve\,\mu(B)$, with $k_0$ minimal. Notice first that
$$s_0\geq c\, t^{d}\,\mu(B),$$
since $B$ can be covered by at most $c\,t^{-d}$ balls of radii $t\,r$. Thus, assuming $\ve < c\,t^{d}/2$, we get $s_0\geq 2\ve\,\mu(B)$ and
thus $k_0\geq1$. In this way, the fact that $s_{k_0}< 2\ve\,\mu(B)$ means that any ball $B(x,t\,r)$ with $x\in B\setminus U_{\gamma r}
(L_{k_0-1})$ (where $U_h(A)$ stands for the $h$-neighborhood of $A$) satisfies
$$\mu(B(x,t\,r))<\ve\,\mu(B).$$
Since $B\setminus U_{\gamma r}(L_{k_0-1})$ can be covered by $c\,t^{-d}$ balls of this type, we infer that
$$\mu(B\setminus U_{\gamma r}(L_{k_0-1}))\leq c\,t^{-d}\,\ve\,\mu(B).$$
As a consequence, if $\ve$ is small enough (i.e.\ $\ve\ll t^{d}$), it turns out that 
\begin{equation}\label{eqajgd23}
\mu(B\cap U_{\gamma r}(L_{k_0-1}))\geq \frac12\,\mu(B).
\end{equation}

It is easy to check that $B\cap U_{\gamma r}(L_{k_0-1})$ can be covered by at most $N$ balls $B_i$ of radii $4\gamma r$, with
$$N=: c\,\frac{(\gamma r)^{d-(k_0-1)}\,r^{k_0-1}}{(\gamma\,r)^d} = c\,\gamma^{-(k_0-1)}\leq c\, \gamma^{-(n-1)}.$$
Let $\wt I_B$ the subfamily of the balls $B_i$ such that 
$\mu(B_i)\geq \dfrac{\mu(B)}{4N}$.
Since
$$\mu\biggl(\bigcup_{i\not \in \wt I_B} B_i\biggr) \leq \sum_{i\not \in \wt I_B} \mu(B_i)\leq N\,\frac{\mu(B)}{4N} = \frac14
\,\mu(B),$$
from \rf{eqajgd23} we infer that
$$\mu\biggl(\bigcup_{i\in \wt I_B} B_i\biggr) \geq \frac12\,\mu(B)- \mu\biggl(\bigcup_{i\not \in \wt I_B} B_i\biggr)\geq \frac14
\,\mu(B).$$

Further, by an elementary covering argument, it follows that there exists a subfamily $B_i$, $i\in I_B \subset\wt I_B$, such that the balls
$10\,B_i$, $i\in I_B$, are parwise disjoint and 
$$\mu\biggl(\,\bigcup_{i\in I_B} B_i\biggr) \gtrsim \mu\biggl(\,\bigcup_{i\in \wt I_B} B_i\biggr)\gtrsim \mu(B)$$
On the other hand, since $\mu(B_i)\geq \dfrac{\mu(B)}{4N}$ for each $i\in I_B$ and $N\lesssim \gamma^{-(n-1)}$, we deduce that
$$\Theta_\mu(B_i)\geq \frac{\mu(B)}{(4N)(4\gamma r)^n} \gtrsim \frac{\mu(B)}{\gamma r^n} = \gamma^{-1}\,\Theta_\mu(B)
\quad \mbox{ for all $i\in I_B.$}$$
Thus we have shown that the alternative (b) holds.
\end{proof}

\vv

For a fixed Radon measure $\mu$ in $\R^d$, let $B\subset \R^d$ be a ball with $\mu(B)>0$. 
We say that $B$ is $(t,\gamma)$-balanced  if the alternative (a) in Lemma \ref{lembalan} holds
with parameters $t>0$, $\ve(t)$, and $\gamma>0$. 

\begin{remark} \label{remt0g}
Notice that, for a given $\gamma>0$, if the alternative (a) holds, $t$ is taken small enough, and $y_0,\ldots,y_n$ are arbitrary points
such that $y_k\in B(x_k,tr)$ for
$0\leq k \leq n$, denoting by $L_k^y$ the $k$-plane that passes through $y_0,y_1,\ldots,y_k$, we will have
$$\dist(y_k,L_{k-1}^y) \geq \frac12\,\gamma\,r\qquad\mbox{for $1\leq k\leq n$.}$$
For each $\gamma$, we denote by $t_0(\gamma)$ some constant $t$ such that this holds, and then we say that $B$ is $\gamma$-balanced if
it is $(t_0(\gamma),\gamma)$-balanced. Otherwise, we say that $B$ is $\gamma$-unbalanced.
\end{remark}


\vv

\begin{lemma}\label{lembalan2}
Let $\mu$ be a Radon measure in $\R^d$ and consider the dyadic lattice $\DD$ associated with
$\mu$ from Lemma \ref{lemcubs}. 
Given $0<t<1$ and $0<\gamma<1$ small enough (i.e.\ smaller than some absolute constant), there exists some constant $\ve=\ve(t)>0$ such that one of the following
alternatives holds for every $Q\in\DD^{db}$:
\begin{itemize}
\item[(a)] There are points $x_0,x_1,\ldots,x_n\in B(Q)=\frac1{28}B_Q$ such that 
$$\mu\bigl(B(x_k,t\,r(B(Q)))\cap B(Q)\bigr)\geq \ve\,\mu(Q) \qquad\mbox{for $0\leq k\leq n$,}$$
and if $L_k$ stands for the $k$-plane that passes through $x_0,x_1,\ldots,x_k$, then
$$\dist(x_k,L_{k-1}) \geq \gamma\,r(B(Q))\qquad\mbox{for $1\leq k\leq n$.}$$

\item[(b)] There exists a family of pairwise disjoint cubes $\{P\}_{P\in I_Q}\subset\DD^{db}$, which are contained in $Q$, so that $\ell(P) \gtrsim\gamma\,\ell(Q)$  and
$\Theta_\mu(2B_{P})\gtrsim \gamma^{-1}\,\Theta_\mu(2B_Q)$ for each $P\in I_Q$,
and
\begin{equation}\label{eqsgk32}
\sum_{P\in I_Q} \Theta_\mu(2B_{P})\,\mu(P)\gtrsim \gamma^{-1}\,\Theta_\mu(2B_Q)\,\mu(Q).
\end{equation}
\end{itemize}
\end{lemma}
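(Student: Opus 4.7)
The plan is to apply Lemma \ref{lembalan} to $B=B(Q)$ with the same parameters $(t,\gamma)$ and transfer each alternative to the present lemma using the doubling of $Q$. Since $Q\in\DD^{db}$, we have $\mu(B(Q))\sim\mu(Q)$ and $\Theta_\mu(B(Q))\sim\Theta_\mu(2B_Q)$ (with constants depending only on $C_0$), so alternative (a) of Lemma \ref{lembalan} for $B(Q)$ gives directly alternative (a) of the present lemma after absorbing $C_0^{-1}$ into $\varepsilon$.

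Assume therefore alternative (b) of Lemma \ref{lembalan} holds for $B(Q)$: there are balls $\{B_i\}_{i\in I_B}$ of radius $4\gamma\,r(B(Q))$, centered on $B(Q)$, with $\{10B_i\}$ pairwise disjoint, $\sum_i\mu(B_i)\gtrsim\mu(Q)$, and $\Theta_\mu(B_i)\gtrsim\gamma^{-1}\Theta_\mu(2B_Q)$. Fix $i\in I_B$ and let $k_i$ be the integer with $A_0^{-k_i}\sim\gamma\,r(B(Q))$. Since cubes in $\DD_{k_i}$ have diameter comparable to $r(B_i)$ and the balls $5B(P)$ with $P\in\DD_{k_i}$ are disjoint, at most $N_0=O(1)$ such cubes meet $B_i\cap\supp\mu$, and a pigeonhole argument produces $P_i^0\in\DD_{k_i}$ together with a point $y_i\in P_i^0\cap B_i\cap\supp\mu$ for which $\mu(P_i^0)\gtrsim\mu(B_i)$; in particular $\mu(P_i^0)/\ell(P_i^0)^n\gtrsim\gamma^{-1}\Theta_\mu(2B_Q)$.

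If $P_i^0\in\DD^{db}$ I set $P_i:=P_i^0$, and then $\Theta_\mu(2B_{P_i})\sim\mu(P_i^0)/\ell(P_i^0)^n\gtrsim\gamma^{-1}\Theta_\mu(2B_Q)$ directly. Otherwise let $P_i$ be the smallest doubling cube with $P_i^0\subsetneq P_i\subseteq Q$; my crucial claim is then $P_i\subsetneq Q$. If this failed, every cube strictly between $P_i^0$ and $Q$ would be non-doubling, and Lemma \ref{lemcad22} would give
$$\mu(P_i^0)\leq\mu(100B(P_i^0))\lesssim A_0^{-10n(k_i-J(Q)-1)}\,\mu(100B(Q))\sim\gamma^{10n}\mu(Q),$$
contradicting $\mu(P_i^0)\gtrsim\gamma^{n-1}\mu(Q)$ once $\gamma^{9n+1}$ is smaller than an absolute constant. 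Having $P_i\subsetneq Q$, set $m=J(P_i^0)-J(P_i)\geq 1$; since every cube strictly between $P_i^0$ and $P_i$ is non-doubling, Lemma \ref{lemcad22} applied with $R=P_i$ yields $\mu(P_i)\gtrsim A_0^{10n(m-1)}\mu(P_i^0)$, and combined with $\ell(P_i)\sim A_0^m\gamma\ell(Q)$ and the doubling of $P_i$ this gives
$$\Theta_\mu(2B_{P_i})\sim\frac{\mu(P_i)}{\ell(P_i)^n}\gtrsim A_0^{9nm-10n}\,\frac{\mu(P_i^0)}{(\gamma\ell(Q))^n}\gtrsim\gamma^{-1}\Theta_\mu(2B_Q),$$
because $A_0^{9nm-10n}$ is bounded below by $A_0^{-n}$ for $m=1$ and grows with $m$. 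In either case $\ell(P_i)\geq\ell(P_i^0)\gtrsim\gamma\ell(Q)$.

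To obtain pairwise disjoint cubes, let $I_Q\subseteq I_B$ index the $P_i$ maximal under inclusion in the family $\{P_i\}_{i\in I_B}$. For each $j\in I_Q$ and each $i\in I_B$ with $P_i\subseteq P_j$, we have $y_i\in P_j$ and $r(B_i)\lesssim\ell(P_j)$, so $\bigcup_{i:P_i\subseteq P_j}10B_i$ is contained in the $O(r(B_i))$-neighborhood of $P_j$; by the small-boundary property \rf{eqfk490} and the doubling of $P_j$ this neighborhood has $\mu$-mass $\lesssim\mu(P_j)$, whence the disjointness of $\{10B_i\}$ gives $\sum_{i:P_i\subseteq P_j}\mu(B_i)\lesssim\mu(P_j)$. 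Summing over $j\in I_Q$ yields $\sum_{j\in I_Q}\mu(P_j)\gtrsim\sum_i\mu(B_i)\gtrsim\mu(Q)$, and multiplying by the density lower bound $\Theta_\mu(2B_{P_j})\gtrsim\gamma^{-1}\Theta_\mu(2B_Q)$ gives \rf{eqsgk32}. I expect the main obstacle to be the density estimate on $P_i$ in the non-doubling case, which hinges on the exponent $A_0^{10n(m-1)}$ in the mass amplification from Lemma \ref{lemcad22} outweighing the scale loss $A_0^{nm}$ sufficiently for the $\gamma^{-1}$ improvement from $B_i$ to survive in $P_i$.
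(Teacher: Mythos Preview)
Your overall strategy matches the paper's, but there is one genuine gap and two places where you work harder than necessary.

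\textbf{The gap.} You apply Lemma~\ref{lembalan} to the full measure $\mu$ on $B(Q)$, whereas the paper applies it to $\sigma=\mu|_{B(Q)}$. With $\mu$, alternative (b) only says the balls $B_i$ are \emph{centered} on $B(Q)$; their $\mu$-mass may well lie in $(1+4\gamma)B(Q)\setminus B(Q)$. Your pigeonhole then produces $P_i^0\in\DD_{k_i}$ meeting $B_i\cap E$, but such a point need not lie in $Q$ (only $E\cap B(Q)\subset Q$, not $E\cap 2B(Q)$), so $P_i^0$ need not be contained in $Q$ and your definition of $P_i$ as the smallest doubling cube with $P_i^0\subsetneq P_i\subseteq Q$ may be vacuous. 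The paper's use of $\sigma$ fixes this in one stroke: $\sigma(B_i)>0$ forces $B_i\cap B(Q)\cap E\neq\varnothing$, and any cube containing such a point is automatically inside $Q$.

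\textbf{Two simplifications.} Once $P_i^0\subset Q$ is secured, your density estimate for $P_i$ is correct but roundabout. The paper just invokes Lemma~\ref{lemcad23}: along a chain of non-doubling cubes the density of $100B(\cdot)$ can only increase, so $\Theta_\mu(2B_{P_i})\sim\Theta_\mu(100B(P_i))\gtrsim\Theta_\mu(100B(P_i^0))\gtrsim\Theta_\mu(2B_{P_i^0})\gtrsim\gamma^{-1}\Theta_\mu(2B_Q)$ in one line; there is no need to argue separately that $P_i\subsetneq Q$. For the packing, the paper observes that the cubes $P_i^0$ all lie at a single generation and (because the $10B_i$ are disjoint and the scales are matched) are pairwise different, hence disjoint; then $\sum_{i:P_i\subset P_j}\mu(P_i^0)\leq\mu(P_j)$ for each maximal $P_j$, which immediately gives $\sum_{j\in I_Q}\mu(P_j)\gtrsim\mu(Q)$. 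This avoids your small-boundary computation, where the containment $10B_i\subset 4B_{P_j}$ is borderline and depends delicately on the constants.
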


Notice that in the previous lemma the cubes $Q$ and $P$, with $P\in I_Q$, are doubling.

\begin{proof}
Consider the measure $\sigma =\mu|_{B(Q)}$.
By applying Lemma \ref{lembalan} to $\sigma$ and the ball $B(Q)$ we infer that either
\begin{itemize}
\item[(i)]
there are points $x_0,x_1,\ldots,x_n\in B(Q)$ such that 
$$\sigma\bigl(B(x_k,t\,r(B(Q)))\bigr)\geq \ve(t)\,\sigma(B(Q)) \qquad\mbox{for $0\leq k\leq n$,}$$
and 
$$\dist(x_k,L_{k-1}) \geq \gamma\,\ell(Q)\qquad\mbox{for $1\leq k\leq n$,}$$
 
\item[(ii)] or there exists a family of balls $\{B_i\}_{i\in J_Q}$, 
 with radii $r(B_i)=4\gamma \,r(B(Q))$, centered on $B$, so that
the balls $\{10B_i\}_{i\in J_Q}$ are pairwise disjoint, 
$$\sum_{i\in J_Q}\sigma(B_i)\gtrsim \sigma(B(Q)),$$
and 
$$\Theta_\sigma(B_i)\gtrsim \gamma^{-1}\,\Theta_\sigma(B(Q))\quad \mbox{ for all $i\in J_Q$}.$$
\end{itemize}
If (i) holds, then the alternative (a) in the lemma holds, by adjusting suitably the constant~$\ve$.

Suppose now that the option (ii) holds. 
For each $i\in J_Q$ consider the cube $\wt P_i$ with $A_0^{-1}\gamma\ell(Q)<\ell(\wt P_i)\leq \gamma\,\ell(Q)$ which intersects $B_i$
and has maximal $\mu$-measure. 
From the fact that the balls $10B_i$, $i\in J_Q$, are pairwise disjoint
we deduce that the cubes $\wt P_i$, $i\in  J_Q$ are pairwise different. 
On the other hand, since $B_i\cap E$ can be covered by a bounded number of cubes with side length comparable
to $\gamma\,\ell(Q)$, we infer that 
$$\mu(\wt P_i)\geq \sigma(\wt P_i)\gtrsim \sigma(B_i)$$ and so 
\begin{equation}\label{eqdjk485}
\sum_{i\in J_Q}\mu(\wt P_i)\gtrsim \sigma(B(Q))\sim \mu(2B_Q),
\end{equation}
since $Q\in\DD^{db}$.
We also deduce that
\begin{equation}\label{eqdjkq18}
\Theta_\mu(2B_{\wt P_i})\gtrsim \Theta_\sigma(B_i)
\gtrsim  \gamma^{-1}\,\Theta_\sigma(B(Q))\sim \gamma^{-1}\,\Theta_\mu(2B_Q),
\end{equation}
taking into account again that $Q\in\DD^{db}$ in the last estimate.
Observe that the fact that $\mu(B(Q)\cap \wt P_i) =\sigma(\wt P_i)>0$ ensures that $\wt P_i\subset Q$.

For each $i\in J_Q$, consider the cube $P_i\in\DD^{db}$ which contains $\wt P_i$ and has minimal side length. Since $Q\in\DD^{db}$, such a cube exists and $P_i\subset Q$. From Lemma \ref{lemcad23} we infer that
\begin{equation}\label{eqdjk2894}
\Theta_\mu(2B_{P_i})\sim \Theta_\mu(100B(P_i))\gtrsim \Theta_\mu(100B(\wt P_i))
\gtrsim \Theta_\mu(2B_{\wt P_i}).
\end{equation}
We extract now from $\{P_i\}_{i\in \wt J_Q}$ the subfamily $I_Q$ of cubes which are maximal and thus disjoint. This family fulfills the properties stated in the alternative (b) of the lemma.
Indeed, by construction each $P\in I_Q$ satisfies
$\ell(P) \gtrsim\gamma\,\ell(Q)$  and since $P= P_i$ for some $i\in\wt J_Q$,
$$\Theta_\mu(2B_{P})\gtrsim \Theta_\mu(2B_{\wt P_i})\gtrsim
\gamma^{-1}\,\Theta_\mu(2B_Q),$$
recalling  \rf{eqdjk2894} and \rf{eqdjkq18}.
From the preceding estimate, \rf{eqdjk485}, and the fact that 
$\sum_{P\in I_Q} \mu(P)\geq \sum_{i\in J_Q}\mu(P_i)$, we infer that
$$
\gamma^{-1}\,\Theta_\mu(2B_Q)\,\mu(Q) \lesssim
\sum_{P\in I_Q} \Theta_\mu(2B_P)\,\mu(P).
$$
\end{proof}
\vv


\section{The Main Lemma}\label{secmlemma}
\vv


Let $\mu$ be the measure in Theorem \ref{teo1} and $E=\supp\mu$, and consider the
dyadic lattice associated with $\mu$ described in Section \ref{secdyad}.
Let $F\subset E$ be an arbitrary compact set such that
\begin{equation}\label{eqsk771}
\int_{F}\int_0^1 \beta_{\mu,2}(x,r)^2\,\frac{dr}r\, d\mu(x)<\infty.
\end{equation}


The next lemma concentrates the main difficulties for the proof of Theorem \ref{teo1}.
\vv

\begin{mainlemma}\label{mainlemma}
Let $0<\tau<1/100$ and $A>100$ be some fixed constants, with $\tau\ll A^{-1}$, say. Suppose that 
 $\delta$ and $\eta$ are small enough positive constants (depending only on $\tau$ and $A$).
Let $R\in\DD^{db}$ be a doubling cube with $\ell(R)\leq \delta$ such that 
\begin{equation}\label{eqassu1}
\mu(R\setminus F)\leq\eta\,\mu(R).
\end{equation}
Then there exists a bi-Lipschitz injection $g:\R^n\to \R^d$ with the  bi-Lipschitz constant 
bounded above by some absolute constant and a family of pairwise disjoint cubes $\sss(R)\subset \DD(R)$ such that the following holds. Consider the following subfamilies of $\sss(R)$:
\begin{itemize}
\item the high density family $\HD(R)$, which is made up of the cubes $Q\in\sss(R)$ which satisfy
$\Theta_\mu(2B_Q)\geq A\, \Theta_\mu(2B_R)$, 

\item the family $\LD(R)$ of low density  cubes, which is made up of the cubes $Q\in\sss(R)$ which satisfy
$\Theta_\mu(2B_Q)\leq \tau\, \Theta_\mu(2B_R)$,

\item the unbalanced family $\UB(R)$, which is made up of the cubes $Q\in\sss(R)\cap\DD^{db} \setminus (\HD(R)\cup\LD(R))
$ such that $\frac1{28}B_Q$ is $\tau^2$-unbalanced.
\end{itemize}
Let $\tree(R)$ the subfamily of the cubes from $\DD(R)$ which are not strictly contained in any cube
from $\sss(R)$. 
We have:
\vv
\begin{itemize}

\item[(a)] $\mu$-almost all $F\cap R\setminus \bigcup_{Q\in\sss(R)}Q$ is contained in 
$\Gamma_R:=g(\R^n)$ and, moreover, the restriction of  $\mu$ to $F\cap R\setminus \bigcup_{Q\in\sss(R)}Q$ is absolutely continuous with
respect to $\HH^1|_{\Gamma_R}$.
\vv

\item[(b)] For all $Q\in\tree(R)$, $\Theta_\mu(2B_Q)\leq c\,A\,\Theta_\mu(2B_R)$.\vv

\item[(c)] The following holds:
\begin{align*}
\sum_{Q\in\sss(R)\setminus (\HD(R)\cup\UB(R)) }\!\!\!\!\mu(Q)& \leq \tau^{1/2}\,\mu(R) \\
&\quad \!\!+ 
\frac{c(A,\tau)}{\Theta_\mu(2B_R)}\sum_{Q\in\tree(R)} \int_{F\cap\delta^{-1}B_Q}\int_{\delta\ell(Q)}^{\delta^{-1}\ell(Q)}
\beta_{\mu,2}(x,r)^2\,\frac{dr}r\,d\mu(x).
\end{align*}
\end{itemize}
\end{mainlemma}

\vvv
Let us remark that the assumption that $\ell(R)\leq\delta$ can be removed if we assume that
$$\int_{F}\int_0^\infty \beta_{\mu,2}(x,r)^2\,\frac{dr}r\, d\mu(x)<\infty,$$
instead of \rf{eqsk771}. Further, let us note that the lemma asserts that the family $\sss(R)$ contains
the subfamilies $\HD(R)$, $\LD(R)$ and $\UB(R)$ defined above. In fact, $\sss(R)$ will be constructed
in Section \ref{secfi} and will contain other subfamilies besides the preceding ones.

Note the difference with respect to Main Lemma 5.1 from \cite{Tolsa-delta}. Above we are  not able to estimate the measure of the 
high density cubes from $\HD(R)$. Instead it turns out that the cubes from $\LD(R)$ have very little mass (although this is 
not stated explicitly in the lemma). This is opposite to
what is shown in Lemma 5.1 from \cite{Tolsa-delta}, where the mass from the cubes from $\HD(R)$ is very small while
one cannot control the mass of the cubes from $\LD(R)$.


Before proving the Main Lemma \ref{mainlemma} we will show in the next section how Theorem \ref{teo1} follows from this, by
means of a suitable corona type decomposition.






\vv
\section{Proof of Theorem \ref{teo1} using the Main Lemma \ref{mainlemma}}\label{secprovam}
\vv

\subsection{Preliminaries}

To prove Theorem \ref{teo1} clearly it is enough to show that $\mu|_F$ is rectifiable. 
Further we may and will assume that
\begin{equation}\label{eqc*}
M_n\mu(x) = \sup_{r>0} \frac{\mu(B(x,r))}{r^n}\leq C_*\quad \mbox{ for all $x\in F$},
\end{equation}
for some constant $C_*$ big enough.
Let $x_0$ be a
point of density of $F$ and for $\eta>0$ let $B_0=B(x_0,r_0)$ be some ball such that
\begin{equation}\label{eqdd3}
\mu(B_0\setminus F)\leq\eta^2\mu(B_0)\qquad \mbox{and}\qquad \mu(\tfrac12B_0)
\geq \frac1{2^{d+1}}\,\mu(B_0).
\end{equation}
Taking into account that for $\mu$-almost every $x_0\in F$ there exists a sequence of balls
like $B_0$ (i.e.\ fulfilling \rf{eqdd3})
centered at $x_0$ with radius tending to $0$
 (see Lemma 2.8 of \cite{Tolsa-llibre} for example), it suffices to prove that any ball like $B_0$ contains a 
rectifiable subset with positive $\mu$-measure. 

Denote by 
$\BZ$ 
the family of cubes $R\in\DD$ contained in $B_0$ such that 
$$\mu(R\setminus F)\geq \eta\,\mu(R).$$
Next we show that the union of the cubes from $\BZ$ has very small $\mu$-measure.
\vv

\begin{lemma}\label{lemaux21}
We have
\begin{equation}\label{eqmaxx4}
\mu\Biggl(\bigcup_{R\in\BZ} R\Biggr)\leq c\,\eta\,\mu(B_0).
\end{equation}
\end{lemma}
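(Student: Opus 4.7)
The plan is to extract a disjoint subfamily of $\BZ$, exploit the dyadic structure, and then sum with the definition of $\BZ$.

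First, I would let $\{R_i\}_{i\in I}$ denote the family of maximal cubes in $\BZ$ (with respect to inclusion). Since $\DD$ is a dyadic lattice in the sense of Lemma \ref{lemcubs}, any two cubes in $\DD$ are either disjoint or one is contained in the other, so the maximal cubes are pairwise disjoint. By maximality,
$$\bigcup_{R\in\BZ} R = \bigcup_{i\in I} R_i,$$
and each $R_i\subset B_0$ by the definition of $\BZ$.

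Next, by the defining property of $\BZ$, for every $i\in I$ we have $\mu(R_i\setminus F)\geq \eta\,\mu(R_i)$. Using disjointness of the $R_i$'s,
$$\mu\Biggl(\bigcup_{R\in\BZ} R\Biggr) = \sum_{i\in I} \mu(R_i) \leq \frac{1}{\eta}\sum_{i\in I}\mu(R_i\setminus F) = \frac{1}{\eta}\,\mu\Biggl(\bigcup_{i\in I}(R_i\setminus F)\Biggr)\leq \frac{1}{\eta}\,\mu(B_0\setminus F).$$
Finally, the first inequality in \rf{eqdd3} gives $\mu(B_0\setminus F)\leq \eta^2\mu(B_0)$, and hence
$$\mu\Biggl(\bigcup_{R\in\BZ} R\Biggr) \leq \eta\,\mu(B_0),$$
which is \rf{eqmaxx4}.

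There is no real obstacle here; the only subtlety is to make sure we use the dyadic nesting to pass to a disjoint maximal subfamily before summing, since $\BZ$ itself is highly overlapping. Once that is done, the estimate is a one-line application of the definition of $\BZ$ together with \rf{eqdd3}.
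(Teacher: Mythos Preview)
Your proof is correct and is essentially the same argument as the paper's: the paper packages the computation by invoking the weak-type $(1,1)$ bound for the dyadic maximal operator $M^d$ applied to $\chi_{B_0\setminus F}$, whereas you unwind that bound directly by passing to maximal cubes and summing. The two are identical in substance; your version simply avoids naming the maximal function.
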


\begin{proof}
We consider the maximal dyadic 
operator
\begin{equation}\label{eqmd*}
M^df(x) = \sup_{Q\in\DD:x\in Q}\frac1{\mu(Q)}\int|f|\,d\mu,
\end{equation}
which is bounded from $L^1(\mu)$ to $L^{1,\infty}(\mu)$.
From \rf{eqassu1} we get
$$\mu\Biggl(\bigcup_{R\in\BZ} R\Biggr)\leq \mu\bigl(\{x\in\R^d:\,M^d\chi_{B_0\setminus F}
(x)\geq \eta\}\bigr)\leq c\,\frac{\mu(B_0\setminus F)}\eta \leq c\,\eta\,\mu(B_0),$$
as wished.
\end{proof}

\vv


\subsection{The families $\wt {\HD}(R)$, $\wt {\UB}(R)$, $\wt {\mathsf{O}}(R)$ and $\wt\sss(R)$}

Recall that the Main Lemma asserts that if $R\in\DD^{db}$, with $\ell(R)\leq\delta$, satisfies the assumption \rf{eqassu1}, then it generates some families of cubes $\HD(R)$, $\UB(R)$ and
$\sss(R)$ fulfilling the properties (a), (b) and (c). 
In this subsection we will introduce some variants of these families. First we need the following auxiliary result.

\begin{lemma}\label{lemhd*}
Assuming $A$ big enough, for every $Q\in\HD(R)$ there exists $P(Q)\in\DD^{db}$ which contains $Q$ with $\ell(P(Q))\sim \ell(Q)$ and
$\Theta_\mu(P(Q))\sim\Theta_\mu(Q)$.
\end{lemma}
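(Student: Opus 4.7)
The plan is to take $P(Q)$ to be the smallest doubling dyadic ancestor of $Q$, and to show that $P(Q)$ sits only a bounded number of generations above $Q$. If $Q$ itself is doubling, set $P(Q)=Q$ and the conclusions are immediate. Otherwise, write the chain of dyadic ancestors $Q=Q_0\subsetneq Q_1\subsetneq\cdots\subsetneq Q_m=P(Q)$, with $Q_1,\ldots,Q_{m-1}$ all non-doubling by minimality of $m$. Because $Q\in\sss(R)\subset\DD(R)$ and $R\in\DD^{db}$, the cube $R$ is itself an ancestor of $Q$, so $P(Q)\subseteq R$. Moreover, since the cubes of $\sss(R)$ are pairwise disjoint and $Q\subset P(Q)$ with $Q\in\sss(R)$, $P(Q)$ cannot be strictly contained in any member of $\sss(R)$; hence $P(Q)\in\tree(R)$.

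The key estimate is Lemma~\ref{lemcad23} applied to the non-doubling intermediates, which gives
$$\Theta_\mu(100B(Q))\leq C_0\,A_0^{-9n(m-1)}\,\Theta_\mu(100B(P(Q))).$$
Since $2B_Q\subset 100B(Q)$ yields $\Theta_\mu(100B(Q))\gtrsim \Theta_\mu(2B_Q)$ and the doubling of $P(Q)$ gives $\Theta_\mu(100B(P(Q)))\sim\Theta_\mu(2B_{P(Q)})$, this converts to
$$\Theta_\mu(2B_{P(Q)})\gtrsim A_0^{9n(m-1)}\,\Theta_\mu(2B_Q).$$

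For the matching upper bound I would apply property (b) of the Main Lemma to $P(Q)\in\tree(R)$, yielding $\Theta_\mu(2B_{P(Q)})\leq cA\,\Theta_\mu(2B_R)$, and combine with the defining inequality $\Theta_\mu(2B_Q)\geq A\,\Theta_\mu(2B_R)$ of $Q\in\HD(R)$ to get $\Theta_\mu(2B_{P(Q)})\leq c\,\Theta_\mu(2B_Q)$ with $c$ absolute. Comparing the two bounds forces $A_0^{9n(m-1)}\lesssim 1$; since $A_0$ is a fixed large constant, $m$ is bounded by an absolute constant (in fact $m\leq 1$), so $\ell(P(Q))\sim\ell(Q)$, and the two bounds together give $\Theta_\mu(2B_{P(Q)})\sim\Theta_\mu(2B_Q)$. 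The only mildly delicate point is the density conversion at the non-doubling cube $Q$: the non-doubling can allow $\mu$ to be much more concentrated in $100B(Q)\setminus B(Q)$ than in $B(Q)$, but only the easy inclusion $2B_Q\subset 100B(Q)$ is needed, which yields the correct direction $\Theta_\mu(100B(Q))\gtrsim\Theta_\mu(2B_Q)$ for the argument.
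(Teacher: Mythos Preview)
Your proof is correct and follows essentially the same route as the paper: take $P(Q)$ to be the smallest doubling ancestor of $Q$, use Lemma~\ref{lemcad23} on the chain of non-doubling intermediates to get the exponential gain $A_0^{9n(m-1)}$, and then cap $\Theta_\mu(2B_{P(Q)})$ via property (b) of the Main Lemma together with the $\HD(R)$ lower bound for $\Theta_\mu(2B_Q)$ to force $m$ bounded. Your explicit justification that $P(Q)\in\tree(R)$ (using the pairwise disjointness of $\sss(R)$) is a detail the paper leaves implicit.
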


\begin{proof}
Let $P(Q)\in\DD$ be the smallest ancestor of $P$ which belongs to $\DD^{db}$. Such a cube $P(Q)$ exists and
$P(Q)\subset R$ because $R\in\DD^{db}$.
 For $i\geq 0$, denote by $Q_i$ be the $i$-th ancestor of $Q$ (i.e.\ $Q_i\in\DD$ is such that $Q\subset Q_i$ and $\ell(Q_i)=A_0^i\,\ell(Q)$).
Let $i\geq 0$ be such that $P(Q)= Q_i$.
Since the cubes $Q_1,\ldots,Q_{i-1}$ do not belong to $\DD^{db}$, by Lemma \ref{lemcad23} we have
$$A\,\Theta_\mu(2B_R)\leq\Theta_\mu(2B_Q)\lesssim\Theta_\mu(100B(Q))\leq C_0\,A_0^{-9ni}\,\Theta_\mu(100B(Q_i))
\sim A_0^{-9ni}\,\Theta_\mu(2B_{Q_i}).$$
As $Q_i\in\tree(R)$, we have $\Theta_\mu(2B_{Q_i})\leq c\,A\,\Theta_\mu(2B_R)$, and the estimate above
implies that $i\lesssim1$. That is, $\ell(P(Q))\sim1$, which in turn gives that
$\Theta_\mu(2B_{P(Q)})\gtrsim \Theta_\mu(2B_Q)$ and proves the lemma.
\end{proof}
\vvv

We define the family $\HD^*(R)$ as follows:
$$\HD^*(R) = \bigl\{P(Q):\,Q\in\HD(R)\bigr\},$$
where $P(Q)$ is as in Lemma \ref{lemhd*}.

Now we turn our attention to the family $\UB(R)$.
Recall that, by Lemma \ref{lembalan2}, if $Q\in\UB(R)$,
there exists a family of pairwise disjoint cubes $\{P\}_{P\in I_Q}\subset\DD^{db}$, which are contained in $Q$, so that $\ell(P) \gtrsim  \tau^2\,\ell(Q)$  and
$\Theta_\mu(2B_{P})\gtrsim \tau^{-2}\,\Theta_\mu(2B_Q)$ for each $P\in I_Q$,
and
\begin{equation}\label{eqsgk32**}
\sum_{P\in I_Q} \Theta_\mu(2B_{P})\,\mu(P)\gtrsim \tau^{-2}\,\Theta_\mu(2B_Q)\,\mu(Q).
\end{equation}
We consider a family $\wt I_Q$ of cubes contained in $Q$, with side length comparable to $a\,\ell(Q)$,  
disjoint from the ones from $I_Q$, so that 
$$Q=\bigcup_{P\in I_Q\cup\wt I_Q} P.$$
We define
$$\UB^*(R) = \bigcup_{Q\in\UB(R)} (I_Q \cup \wt I_Q).$$

On the other hand, we denote
$${\mathsf{O}}(R) = \sss(R)\setminus (\HD(R)\cup\UB(R)).$$
We set
$${\mathsf{O}}^*(R) = \{Q\in\DD:\,\mbox{$Q$ is the son of some cube from ${\mathsf{O}}(R)$}\},$$
and 
$$\sss^*(R) =\HD^*(R) \cup\UB^*(R) \cup {\mathsf{O}}^*(R).$$
Finally, let $\wt\sss(R)$ be a maximal subfamily (and thus disjoint) of $\sss^*(R)$.
We denote by $\wt{\HD}(R)$, $\wt{\UB}(R)$ and $\wt {\mathsf{O}}(R)$ the subfamilies of the cubes from $\wt\sss(R)$ which belong to 
$\HD^*(R)$, $\UB^*(R)$ and ${\mathsf{O}}^*(R)$, respectively.

\begin{remark}\label{rem111}
Notice that, by construction, if $Q \in\UB(R)$ is not contained in any cube from $\wt {\HD}(R)$, then all the cubes from $I_Q\cup\wt I_Q$ belong to $\wt\sss(R)$. Observe also that $R$ does not belong to $\wt\sss(R)$. Indeed, $R\not\in \wt {\HD}(R)$ because every $Q\in\HD(R)$ satisfies
$\Theta_\mu(2B_Q)\gtrsim A\,\Theta_\mu(2B_R)$, so for $A$ large enough, $\Theta_{\mu}(2B_{Q})>\Theta_{"\mu}(2B_{R})$. On can also deduce that $R\not\in\wt{\UB}(R)$ from \rf{eqsgk32**} for $a$ small enough. On the other hand,  $R\not\in\wt {\mathsf{O}}(R)$ because all the cubes from $\wt {\mathsf{O}}(R)$ are sons of other cubes from $\tree(R)$.
\end{remark}

\vv


\subsection{The corona decomposition}

Let us continue with the proof of Theorem \ref{teo1}.
From \rf{eqmaxx4} and the fact that $\mu(B_0)\sim\mu(\frac12\,B_0)$ we infer that,
for $\eta$ small enough, there exists some cube $R_0\in\DD^{db}$ satisfying $R_0\subset \frac34 B_0$,
$\ell(R_0)\leq\delta$, $\delta^{-1}B_{R_0}\subset \frac9{10}B_0$, and 
$$\mu\Biggl(R_0\setminus \bigcup_{Q\in\BZ}Q\Biggr)>0.$$
We are going now to construct a family of cubes $\ttt$ contained in $R_0$ inductively, by applying the Main
Lemma \ref{mainlemma}. To this end, we need to introduce some additional notation.

Above, for a cube $R\in\DD^{db}$, with $\ell(R)\leq\delta$, which satisfies \rf{eqassu1}, we have defined a family of stopping cubes $\wt\sss(R)$.
Now it is convenient to define $\wt\sss(R)$
also if the assumption \rf{eqassu1} does not hold.
If $R$ is a descendant of $R_0$ such that $R\in\DD^{db}\cap\BZ
$ (note that
this means that \rf{eqassu1} does not hold), we set $\wt\sss(R)=\varnothing$. 

Given a cube $Q\in\DD$, we denote by $\MD(Q)$ the family  of maximal cubes 
(with respect to inclusion) from $P\in \DD^{db}(Q)$. Recall that, by Lemma \ref{lemcobdob}, this family covers $\mu$-almost
all $Q$. Moreover, by Lemma \ref{lemcad23} it follows that if $P\in\MD(Q)$, then $\Theta_\mu(2B_P)
\leq c\,\Theta_\mu(2B_Q)$.
Given $R\in\ttt$, we denote
$$\nex(R) = \bigcup_{Q\in\wt\sss(R)} \MD(Q).$$
By Remark \ref{rem111} and the construction above, it is clear that the cubes from $\nex(R)$ are different from $R$.

For the record, notice that, by construction, if $P\in\nex(R)$, then
\begin{equation}\label{eqdens***}
\Theta_\mu(2B_S)\leq c(A,\tau)\,\Theta_\mu(2B_R) \quad\mbox{for all $S\in\DD$ such that $P\subset S\subset R$.}
\end{equation}

We are now ready to construct the aforementioned family $\ttt$. We will have
$\ttt=\bigcup_{k\geq0}\ttt_k$. First we set
$$\ttt_0=\{R_0\}.$$
Assuming $\ttt_k$ to be defined, we set
$$\ttt_{k+1}  = \bigcup_{R\in\ttt_k} \nex(R).$$
Note that the families $\nex(R)$, with $R\in\ttt_k$, are pairwise disjoint.


\subsection{The families of cubes $ID_H$, $ID_U$ and $ID$}

We distinguish a special type of cubes $R\in\ttt$. We write $R\in ID_H$ (increasing density because of high density cubes) if
$$\mu\biggl(\,\bigcup_{Q\in\wt{\HD}(R)} Q\biggr)\geq \frac14  \,\mu(R).
$$
Also, we write $R\in ID_U$ (increasing density because of unbalanced cubes) if
$$\mu\biggl(\,\bigcup_{Q\in\wt{\UB}(R)} Q\biggr)\geq \frac14  \,\mu(R).
$$
We set
$$ID = ID_H\cup ID_U.$$
\vv

\begin{lemma}\label{lemID}
Suppose that $A$ is big enough and $\tau$ small enough. If $R\in ID$, then
\begin{equation}\label{eqaki33}
\Theta_\mu(2B_R)\,\mu(R) \leq \frac12 \sum_{Q\in \nex(R)}\Theta_\mu(2B_Q)\,\mu(Q).
\end{equation}
\end{lemma}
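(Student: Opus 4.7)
The plan is to verify the inequality separately in the two sub-cases $R\in ID_H$ and $R\in ID_U$; in each, I would exhibit enough cubes in $\nex(R)$ whose density is much larger than $\Theta_\mu(2B_R)$, and then absorb the factor $\tfrac14\mu(R)$ coming from the defining hypothesis.

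For $R\in ID_H$, I would first observe that every $Q\in\wt{\HD}(R)$ is doubling by the construction of $\HD^*(R)$, hence $Q\in\MD(Q)\subset\nex(R)$. Combining Lemma~\ref{lemhd*} with the defining property of $\HD(R)$ gives $\Theta_\mu(2B_Q)\gtrsim A\,\Theta_\mu(2B_R)$ for every $Q\in\wt{\HD}(R)$. Together with the disjointness of $\wt{\HD}(R)$ and the $ID_H$ hypothesis, this yields
\[
\sum_{Q\in\nex(R)}\Theta_\mu(2B_Q)\,\mu(Q)\;\geq\;\sum_{Q\in\wt{\HD}(R)}\Theta_\mu(2B_Q)\,\mu(Q)\;\gtrsim\;A\,\Theta_\mu(2B_R)\cdot\tfrac14\mu(R),
\]
and the claim follows by taking $A$ large enough (depending only on the implicit constants of Lemma~\ref{lemhd*}).

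For $R\in ID_U$ some bookkeeping is needed. Each $P\in\wt{\UB}(R)$ belongs to $I_{Q(P)}\cup\wt I_{Q(P)}$ for a unique $Q(P)\in\UB(R)$; I would set $\UB''(R):=\{Q(P):P\in\wt{\UB}(R)\}$. Using Remark~\ref{rem111} together with the disjointness of $\sss(R)$ and the maximality defining $\wt\sss(R)\subset\sss^*(R)$, I would verify that $\UB''(R)$ is precisely the collection of $Q\in\UB(R)$ not contained in any cube of $\wt{\HD}(R)$, and hence that for every such $Q$ the entire partition $I_Q\cup\wt I_Q$ lies in $\wt{\UB}(R)$. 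In particular,
\[
\bigcup_{P\in\wt{\UB}(R)} P\;=\;\bigsqcup_{Q\in\UB''(R)} Q,
\]
so the $ID_U$ hypothesis gives $\sum_{Q\in\UB''(R)}\mu(Q)\geq\tfrac14\mu(R)$. Since $Q\in\UB(R)\setminus\LD(R)$ forces $\Theta_\mu(2B_Q)\geq\tau\,\Theta_\mu(2B_R)$, and every $P\in I_Q$ is a doubling cube inside $\wt\sss(R)$ and therefore in $\nex(R)$, applying \rf{eqsgk32**} cube by cube yields
\[
\sum_{Q\in\nex(R)}\Theta_\mu(2B_Q)\,\mu(Q)\;\geq\;\sum_{Q\in\UB''(R)}\sum_{P\in I_Q}\Theta_\mu(2B_P)\,\mu(P)\;\gtrsim\;\tau^{-1}\,\Theta_\mu(2B_R)\,\mu(R),
\]
and the claim follows by taking $\tau$ small enough.

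The main obstacle is the combinatorial step identifying $\UB''(R)$ and verifying that for each $Q\in\UB''(R)$ the full family $I_Q\cup\wt I_Q$ survives as maximal cubes in $\sss^*(R)$: the disjointness of $\sss(R)$ rules out that such a $Q$ lies strictly inside a cube from $\UB^*(R)$ or from $\mathsf{O}^*(R)$, so containment in a cube from $\HD^*(R)$ (equivalently, in a cube of $\wt{\HD}(R)$) is the only possible obstruction, which is precisely what is excluded by the definition of $\UB''(R)$.
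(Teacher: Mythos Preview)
Your proof is correct and follows essentially the same route as the paper's: the same case split, the same use of $\wt{\HD}(R)\subset\nex(R)$ in the $ID_H$ case, and in the $ID_U$ case your family $\UB''(R)$ is exactly the paper's $\UB_0(R)$, with the same chain $\Theta_\mu(2B_R)\leq\tau^{-1}\Theta_\mu(2B_Q)$ followed by \rf{eqsgk32**}. The only difference is cosmetic: you spell out the combinatorial verification that $I_Q\cup\wt I_Q\subset\wt\sss(R)$ for $Q\in\UB''(R)$ more carefully than the paper does (the paper simply cites Remark~\ref{rem111}).
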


\begin{proof}
Suppose first that $R\in ID_H$. Recalling that $\Theta_\mu(2B_Q)\gtrsim A\,\Theta_\mu(2B_R)$ for every $Q\in\wt{\HD}(R)$,
we deduce that
 $$\Theta_\mu(2B_R)\,\mu(R) \leq 4 \sum_{Q\in \wt{\HD}(R)}\Theta_\mu(2B_R)\,\mu(Q)
\leq c\,A^{-1}\sum_{Q\in \wt{\HD}(R)}\Theta_\mu(2B_Q)\,\mu(Q).$$
Since the cubes from $\wt{\HD}(R)$ belong to $\DD^{db}$ it follows that $\wt{\HD}(R)\subset\nex(R)$ and then
 it is clear that \rf{eqaki33} holds if $A$ is taken big enough.

Consider now the case that $R\in ID_U$, and take $Q\in\wt{\UB}(R)$. By construction, there exists a cube $\wh Q\in
\UB(R)$ which contains $Q$, with $Q\in I_{\wh Q}\cup\wt I_{\wh Q}$, and such that all the cubes from $I_{\wh Q}\cup\wt I_{\wh Q}$ belong to $\wt\sss(R)$. Since the cubes from $I_{\wh Q}$ are doubling,
it turns out that $I_{\wh Q}\subset \nex(R)$. 
Denote by $\UB_0(R)$ the cubes from $\UB(R)$ which contain some cubes from $\wt {\UB}(R)$ (they coincide with the cubes
from $\UB^{*}(R)$ which are not contained in any cube from $\wt {\HD}(R)$).
Then we have
$$
\Theta_\mu(2B_R)\,\mu(R) \leq 4 \sum_{Q\in \wt{\UB}(R)}\Theta_\mu(2B_R)\,\mu(Q) = 
4 \sum_{S\in\UB_0(R)}\Theta_\mu(2B_R)\,\mu(S).
$$
Using now that $\Theta_\mu(2B_R)\leq  \tau^{-1}\,\Theta_\mu(2B_S)$ (since the cubes from $\UB(R)$ do not belong to $\LD(R)$)
and recalling \rf{eqsgk32**}, we infer that
$$\Theta_\mu(2B_R)\,\mu(R)\leq 4\, \tau^{-1}\sum_{S\in\UB_0(R)}\Theta_\mu(2B_S)\,\mu(S)
\lesssim \tau^{-1} \tau^2 \sum_{S\in\UB_0(R)}
\sum_{Q\in I_S} \Theta_\mu(2B_{Q})\,\mu(Q).
$$
Since for all  $S\in\UB_0(R)$ the cubes from $I_S$ belong to $\DD^{db}$, we have $I_S\subset \nex(R)$ , and thus \rf{eqaki33}
also holds in this case if $\tau$ is small enough. 
\end{proof}
\vv


\subsection{The packing condition}

Next we prove a key estimate.
\vv

\begin{lemma}\label{lemkey62}
If $\tau$ is chosen small enough in the Main Lemma, then
\begin{equation}\label{eqsum441}
\sum_{R\in\ttt}\Theta_\mu(2B_R)\,\mu(R)\leq C_*\,\mu(R_0)+
c(A,\tau,\eta,\delta)\,\int_F\int_0^1\beta_{\mu,2}(x,r)^2\,\frac{dr}r\,d\mu(x),
\end{equation}
where $C_*$ is the constant in \rf{eqc*}.
\end{lemma}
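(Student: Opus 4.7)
The plan is to bound the truncated sums $S^N:=\sum_{R\in\ttt,\,J(R)\le N}\Theta_\mu(2B_R)\mu(R)$ uniformly in $N$. These are finite since the cubes at each level are pairwise disjoint subsets of $R_0$ and, combining \rf{eqc*} with \rf{eqdens***} and the fact that every $R\in\ttt\setminus\BZ$ contains a point of $F$ (so $\Theta_\mu(2B_R)\le cC_*$ there and propagates by \rf{eqdens***} to $\BZ$-leaves), one has $\Theta_\mu(2B_R)\le c(A,\tau)C_*$ uniformly on $\ttt$. Writing $Z(R):=\Theta_\mu(2B_R)\mu(R)$, I split $\ttt^N:=\{R\in\ttt:J(R)\le N\}$ into four pieces. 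First, Lemma~\ref{lemID} applied to each $R\in ID\cap\ttt^N$ with $J(R)<N$ and telescoping yields
\[
  \sum_{R\in ID\cap\ttt^N,\,J(R)<N}Z(R)\le\tfrac12 S^N.
\]
Second, the $\BZ$-leaves of $\ttt$ and, third, the cubes at the maximal level $N$ are each disjoint subsets of $R_0$, and using $Z(R)\le c(A,\tau)C_*\mu(R)$ each contributes at most $c(A,\tau)C_*\mu(R_0)$.

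The fourth piece consists of $R\in\ttt^N\setminus(ID\cup\BZ)$ with $J(R)<N$, which I further split into two cases. For such $R$, the failure of $ID_H\cup ID_U$ gives
\[
  \mu\Bigl(R\setminus\bigcup_{Q\in\wt{\HD}(R)\cup\wt{\UB}(R)}Q\Bigr)\ge\tfrac12\mu(R),
\]
and the leftover mass lies in $\bigcup_{Q\in\wt{\mathsf{O}}(R)}Q$ together with the good set $G_R:=R\setminus\bigcup_{Q\in\wt\sss(R)}Q$, so one of the following holds. \emph{Case A:} $\mu\bigl(\bigcup\wt{\mathsf{O}}(R)\bigr)\ge\tfrac14\mu(R)$. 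Since the cubes of $\wt{\mathsf{O}}(R)\subset\mathsf{O}^*(R)$ are sons of cubes in $\mathsf{O}(R)=\sss(R)\setminus(\HD(R)\cup\UB(R))$, part~(c) of the Main Lemma gives
\[
  \tfrac14\mu(R)\le\sum_{Q\in\mathsf{O}(R)}\mu(Q)\le\tau^{1/2}\mu(R)+\frac{c(A,\tau)}{\Theta_\mu(2B_R)}\Gamma(R),
\]
with $\Gamma(R):=\sum_{Q\in\tree(R)}\int_{F\cap\delta^{-1}B_Q}\int_{\delta\ell(Q)}^{\delta^{-1}\ell(Q)}\beta_{\mu,2}(x,r)^2\,\tfrac{dr}{r}\,d\mu(x)$. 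The choice $\tau^{1/2}<\tfrac18$ absorbs the $\tau^{1/2}\mu(R)$ term and yields $Z(R)\le c(A,\tau)\Gamma(R)$. \emph{Case B:} $\mu(G_R)\ge\tfrac14\mu(R)$. Part~(a) of the Main Lemma places $G_R\cap F$ (modulo $\mu$-null sets) on the bi-Lipschitz manifold $\Gamma_R$, and combining with $\mu(R\setminus F)\le\eta\mu(R)$ and $\eta<\tfrac18$ yields $\mu(G_R\cap F)\ge\tfrac18\mu(R)$, so $Z(R)\le 8\Theta_\mu(2B_R)\mu(G_R\cap F)\le cC_*\mu(G_R\cap F)$.

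The decisive geometric fact in Case B is that the sets $G_R$ are pairwise disjoint across $R\in\ttt$: any proper $\ttt$-descendant $R'\subsetneq R$ lies inside some $Q\in\wt\sss(R)$ by the definition of $\nex$ (since the first descent $R\to R''\in\nex(R)$ satisfies $R''\in\MD(Q)$ for such a $Q$), hence $R'\subset\bigcup\wt\sss(R)$ and $G_{R'}\subset R'$ is disjoint from $G_R$. Consequently $\sum_{R\in\text{Case B}}\mu(G_R\cap F)\le\mu(R_0)$ and $\sum_{R\in\text{Case B}}Z(R)\lesssim C_*\mu(R_0)$. For Case A, a careful but standard check shows that each $Q\in\DD(R_0)$ lies in $\tree(R)$ for only boundedly many $R\in\ttt$ — the only overlap between $\tree(R)$ and $\tree(R')$ with $R'\in\nex(R)$ comes from the short $\HD^*$-ancestor chain of bounded length controlled by Lemma~\ref{lemhd*} — so by Fubini,
\[
  \sum_{R\in\ttt}\Gamma(R)\lesssim\sum_{Q\in\DD(R_0)}\int_{F\cap\delta^{-1}B_Q}\int_{\delta\ell(Q)}^{\delta^{-1}\ell(Q)}\beta_{\mu,2}^2\,\tfrac{dr}{r}\,d\mu\lesssim_\delta\int_F\int_0^1\beta_{\mu,2}(x,r)^2\,\tfrac{dr}{r}\,d\mu(x),
\]
and hence $\sum_{R\in\text{Case A}}Z(R)\lesssim_{A,\tau,\delta}\int_F\int_0^1\beta_{\mu,2}^2\tfrac{dr}{r}d\mu$.

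Adding the four bounds, absorbing $\tfrac12 S^N$ into the left side, and letting $N\to\infty$ yields $\sum_{R\in\ttt}Z(R)\le cC_*\mu(R_0)+c(A,\tau,\eta,\delta)\int_F\int_0^1\beta_{\mu,2}(x,r)^2\,\tfrac{dr}{r}\,d\mu(x)$, proving \rf{eqsum441}. The main obstacle I foresee is the combinatorial bookkeeping: verifying both the disjointness of the $G_R$ and the bounded overlap of the $\tree(R)$ families across $R\in\ttt$, which requires a careful analysis of how the auxiliary families $\HD^*(R)$, $\UB^*(R)$, $\mathsf{O}^*(R)$ defining $\wt\sss(R)$ interact with the original stopping family $\sss(R)$ from the Main Lemma.
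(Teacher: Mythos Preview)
Your approach is essentially the paper's, and the broad strokes are correct. Two points.

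First, a technical slip: truncating by the dyadic level $J(R)\le N$ does not work as stated. For $R\in ID$ with $J(R)<N$, the cubes in $\nex(R)=\bigcup_{Q\in\wt\sss(R)}\MD(Q)$ can have arbitrarily large $J$-value (the maximal doubling descendant of a cube may be very small), so your claimed inequality $\sum_{R\in ID,\,J(R)<N}Z(R)\le\tfrac12 S^N$ need not hold. The paper truncates instead by $\ttt$-\emph{generation}, setting $\ttt_0^k=\bigcup_{j\le k}\ttt_j$; then $\nex(R)\subset\ttt_{j+1}\subset\ttt_0^{k+1}$ whenever $R\in\ttt_j$ with $j\le k$, and the single extra layer $\ttt_{k+1}$ is absorbed using disjointness of that layer together with the uniform bound $\Theta_\mu(2B_R)\lesssim C_*$. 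With this change of truncation your argument goes through unchanged.

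Second, a cosmetic difference: the paper does not split $\ttt\setminus(ID\cup\BZ)$ into your Cases~A and~B. Instead it proves, for every such $R$, the single inequality
\[
\mu(R)\le 4\,\mu\Bigl(R\setminus\!\!\bigcup_{Q\in\nex(R)}\!\!Q\Bigr)+\frac{c(A,\tau)}{\Theta_\mu(2B_R)}\,\Gamma(R)
\]
(combining $R\notin ID$ with Main Lemma~(c) and absorbing the $\tau^{1/2}\mu(R)$ term), and then sums both pieces over $R$. Your set $G_R$ equals $R\setminus\bigcup_{Q\in\nex(R)}Q$ up to a $\mu$-null set, and the disjointness argument is the same. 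Your discussion of the bounded overlap of the families $\tree(R)$ across $R\in\ttt$ is actually more careful than the paper's, which simply writes $\sum_{R\in\ttt}\sum_{Q\in\tree(R)}(\cdots)\le\sum_{Q\in\DD}(\cdots)$; either version suffices for the conclusion.
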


\begin{proof}
For a given $k\leq 0$, we denote 
$$\ttt_0^k = \bigcup_{0\leq j\leq k}\ttt_j,$$
and also
$$ID_0^k = ID \cap \ttt_0^k.$$

To prove \rf{eqsum441}, first we deal with the cubes from the $ID$ family.
By Lemma \ref{lemID}, for every $R\in ID$ we have
$$\Theta_\mu(2B_R)\,\mu(R) \leq \frac12 \sum_{Q\in\nex(R)} \Theta_\mu(2B_Q)\,\mu(Q)$$
 and hence we obtain
$$
\sum_{R\in ID_0^k} \Theta_\mu(2B_R)\,\mu(R) \leq \frac12 \sum_{R\in ID_0^k}\sum_{Q\in \nex(R)}\Theta_\mu(2B_Q)\,\mu(Q)
\leq \frac12\sum_{Q\in \ttt_0^{k+1}}\Theta_\mu(2B_Q)\,\mu(Q),
$$
because the cubes from $\nex(R)$ with $R\in\ttt_0^k$ belong to $\ttt_0^{k+1}$. 
So we have
\begin{align*}
\sum_{R\in \ttt_0^k} \Theta_\mu(2B_R)\,\mu(R) & = \sum_{R\in \ttt_0^k\setminus ID_0^k} \Theta_\mu(2B_R)\,\mu(R)+
\sum_{R\in ID_0^k} \Theta_\mu(2B_R)\,\mu(R)\\
& \leq \!\sum_{R\in \ttt_0^{k}\setminus ID_0^k}\!\!\!\Theta_\mu(2B_R)\,\mu(R) +
\frac12\sum_{R\in \ttt_0^{k}}\!\Theta_\mu(2B_R)\,\mu(R) +
 c\,C_*\mu(R_0),
\end{align*}
where we took into account that $\Theta_\mu(2B_R)\lesssim C_*$ for every $R\in\ttt$ (and in particular for all $R\in\ttt_{k+1}$)
for the last inequality.
So we deduce that
$$\sum_{R\in \ttt_0^k} \Theta_\mu(2B_R)\,\mu(R)  \leq 2\sum_{R\in \ttt_0^{k}\setminus ID_0^k}\Theta_\mu(2B_R)\,\mu(R) +
 c \,C_*\mu(R_0).$$
Letting $k\to\infty$, we derive
\begin{equation}\label{eqaaii}
\sum_{R\in \ttt} \Theta_\mu(2B_R)\,\mu(R)\leq 2\sum_{R\in \ttt\setminus ID} \Theta_\mu(2B_R)\,\mu(R)+  c \,C_*\mu(R_0).
\end{equation}

We split the first term on the right hand side of \rf{eqaaii} as follows:
\begin{align}\label{eqsssi}
\sum_{R\in \ttt\setminus ID} \Theta_\mu(2B_R)\,\mu(R) & = \sum_{R\in \ttt\setminus (ID\cup \BZ)}\!\!\!\cdots \;
+ \sum_{R\in \ttt\cap \BZ} \!\!\!\cdots =: S_1 + S_2.
\end{align}
To estimate the sum $S_1$ we use the fact that, for $R\in\ttt\setminus (ID\cup \BZ)$, 
we have
$$
\mu\biggl(R\setminus \bigcup_{Q\in \wt{\HD}(R)\cup \wt{\UB}(R)} Q\biggr)\geq \frac12\,\mu(R),$$
and then we apply the inequality (c) in the Main Lemma to get
\begin{align*}
\mu(R)& \leq 
2\,\mu\biggl(R\setminus \bigcup_{Q\in \wt\sss(R)} Q\biggr) 
+ 2
\,\mu\biggl(
\bigcup_{Q\in\wt\sss(R)\setminus \wt{\HD}\cup\wt{\UB}(R) }Q\biggr)\\
& \leq 
2\,\mu\biggl(R\setminus \bigcup_{Q\in \nex(R)} Q\biggr) 
+ 2
\sum_{Q\in\sss(R)\setminus (\HD(R)\cup\UB(R)) }\mu(Q)
\\
& \leq 2 \,\mu\biggl(R\setminus \bigcup_{Q\in \nex(R)} Q\biggr) 
+2\tau^{1/2}\,\mu(R) \\&
\quad + 
\frac{c(A,\tau)}{\Theta_\mu(2B_R)}\sum_{Q\in\tree(R)} \int_{F\cap\delta^{-1}B_Q}\int_{\delta\ell(Q)}^{\delta^{-1}\ell(Q)}
\beta_{\mu,2}(x,r)^2\,\frac{dr}r\,d\mu(x).
\end{align*}

Hence, if $\tau^{1/2}\leq 1/4$, say,
$$\mu(R) \leq 4 \,\mu\biggl(R\setminus \bigcup_{Q\in \nex(R)} Q\biggr) 
+
\frac{c(A,\tau)}{\Theta_\mu(2B_R)}\sum_{Q\in\tree(R)} \int_{F\cap\delta^{-1}B_Q}\int_{\delta\ell(Q)}^{\delta^{-1}\ell(Q)}
\beta_{\mu,2}(x,r)^2\,\frac{dr}r\,d\mu(x).$$
So we deduce that
\begin{align}\label{eqal163}
S_1 & \leq 4 
\sum_{Q\in\tree(R)} \Theta_\mu(2B_R)\,\mu\biggl(R\setminus \bigcup_{Q\in \nex(R)} Q\biggr)
\nonumber\\
&\quad
+
c(A,\tau) \sum_{R\in \ttt}\,
\sum_{Q\in\tree(R)} \int_{F\cap\delta^{-1}B_Q}\int_{\delta\ell(Q)}^{\delta^{-1}\ell(Q)}
\beta_{\mu,2}(x,r)^2\,\frac{dr}r\,d\mu(x).
\end{align}
To deal with the first sum on the right hand side above we take into account that $\Theta_\mu(2B_R)\lesssim C_*$ for all $R\in\ttt$ by \eqref{eqc*} and that the sets $R\setminus \bigcup_{Q\in \nex(R)} Q$, with $R\in\ttt$, are pairwise disjoint. Then we get
$$\sum_{Q\in\tree(R)} \Theta_\mu(2B_R)\,\mu\biggl(R\setminus \bigcup_{Q\in \nex(R)} Q\biggr)\leq c\,C_*\,\mu(R_0).$$
On the other hand, the last sum on the right hand side of \rf{eqal163} does not exceed
$$
 \sum_{Q\in \DD}
 \int_{F\cap\delta^{-1}B_Q}\int_{\delta\ell(Q)}^{\delta^{-1}\ell(Q)}
\beta_{\mu,2}(x,r)^2\,\frac{dr}r\,d\mu(x)
\leq c(\delta)\,\int_F\int_0^1\beta_{\mu,2}(x,r)^2\,\frac{dr}r\,d\mu(x),
$$
by the finite superposition of the domains of integration of the integrals on the left hand side.
So we obtain 
$$S_1\leq c\,C_*\,\mu(R_0) +
 c(A,\tau,\delta) \int_F\int_0^1\beta_{\mu,2}(x,r)^2\,\frac{dr}r\,d\mu(x).
$$

Concerning the sum $S_2$ in \rf{eqsssi} we take into account that, by construction, the cubes $R\in\ttt\cap \BZ$
are pairwise disjoint, because $\nex(R) = \varnothing$ for such cubes $R$. So we have
$$S_2 \leq c\,C_*\sum_{R\in\ttt\cap \BZ}\mu(R) \leq c\,C_*\,\mu(R_0),$$
as $\Theta_\mu(2B_R)\lesssim C_*$ for every $R\cap\ttt$.

Gathering the estimates we obtained for $S_1$ and $S_2$ and applying \rf{eqaaii}, the lemma follows.
\end{proof}

\vv


\subsection{Proof of Theorem \ref{teo1}}\label{subhh*}
From Lemma \ref{lemkey62} we deduce that for $\mu$-a.e.\ $x\in R_0$,
\begin{equation}\label{eqsug9}
\sum_{R\in\ttt:x\in R} \Theta_\mu(2B_R)<\infty.
\end{equation}
For a given $x\in R_0\setminus \bigcup_{Q\in\BZ}Q$ such that
\rf{eqsug9} holds, 
let $R_0,R_1,R_2,\ldots$ be the cubes from $\ttt$ such that $x\in R_i$. Suppose that
this is an infinite sequence and assume that $R_0\supset R_1\supset R_2\supset\ldots$,
so that for each $i\geq0$, $R_{i+1}\in\nex(R_i)$.
From \rf{eqdens***} it follows that
$$\Theta_\mu(x,r)\l\Theta_\mu(x,r)eq c(A,\tau)\,\Theta_\mu(2B_{R_i})\qquad \mbox{ for
$\frac1{10}\ell(R_{i+1})\leq r\leq \frac1{10}\ell(R_i)$.}$$
As a consequence, 
$$\Theta^{n,*}(x,\mu)\leq c(A,\tau)\,\limsup_{i\to\infty}\Theta_\mu(2B_{R_i}).$$
From \rf{eqsug9}, we infer that the limit on the right hand side above
vanishes and thus $\Theta^{n,*}(x,\mu)=0$.
So we have shown that for any $x\in R_0$ satisfying \rf{eqsug9}, the condition $\Theta^{n,*}(x,\mu)>0$ implies that the collection
of cubes $R\in\ttt$ which contain $x$ is finite.

By the property (a) in the Main Lemma and the above construction, if $R\in\ttt\setminus
\BZ$, then there exists a set $Z_R$ of $\mu$-measure $0$ and a set 
$W_R\subset \Gamma_R$ such that
\begin{equation}\label{eqfhe2}
R\subset Z_R\cup W_R\cup\bigcup_{Q\in\ttt(R)} Q,
\end{equation}
with $\mu|_{W_R}$ being absolutely continuous with respect to $\HH^1|_{\Gamma_R}$.

Suppose now that $\Theta^{n,*}(x,\mu)>0$, that
\begin{equation}\label{eqshh1}
x\in R_0\setminus \Biggl(\bigcup_{R\in \ttt} Z_R\cup \bigcup_{Q\in\BZ}Q\Biggr),
\end{equation} 
and that \rf{eqsug9} holds. Note that the set of such points is a subset of full $\mu$-measure of 
$R_0\setminus \bigcup_{Q\in\BZ}Q$. Let $R_n$ be the smallest cube from $\ttt$ which
contains $x$. Since $x\not\in\bigcup_{Q\in\BZ}Q$, we have $R_n\not\in \BZ$ and 
so \rf{eqfhe2} holds for $R_n$. Since $x\not\in Z_{R_n}$ and 
$x$ does not belong to any cube from $\nex(R_n)$ (by the choice of $R_n$), we infer that 
 $x\in W_{R_n}\subset
\Gamma_{R_n}$.
Thus $\mu$-almost all the subset of points $x$ with $\Theta^{n,*}(x,\mu)>0$
satisfying \rf{eqshh1} and \rf{eqsug9} is contained in $\bigcup_{n}W_{R_n}$, which is an $n$-rectifiable set such
that $\mu|_{\bigcup_{n}W_{R_n}}$ is absolutely continuous with respect to $\HH^n|_{\bigcup_{n}W_{R_n}}$.
\fiproof
\vv


\section {Proof of the Main Lemma} \label{secfi}

\vv

\def\dist{\mathrm{dist}}
\def\spn{\mathrm{Span}}

\def\lec{\lesssim}
\def\LF{\mathsf{LF}}

In this section, we prove the Main Lemma. We will assume that all implicit constants in the inequalities that follow depend on $C_{0},A_{0}$, and $d$.

\subsection{The stopping conditions}

Take $R\in\DD^{db}$ with $\diam (R)\leq \ell(R)\leq\delta<1/2$ so that 
\rf{eqassu1} holds. We denote by $\sss(R)$ the family of the maximal cubes $Q\subset R$  for which one of the following holds:
\begin{enumerate}
\item $Q\in \HD(R)\cup \LD(R)\cup \UB(R)$,
\item $Q\in \LF(R)$ (``low concentration of $F$'') where $\LF(R)$ is the set of cubes $Q\subset R$ for which
\[\mu(Q\cap F)\leq \mu(Q)/2,\]
\item $Q\in \mathsf{J}(R)$ (``big Jones' function'') , meaning $Q\not\in (\HD(R)\cup \LD(R)\cup \UB(R)\cup \LF(R))$ and
\[\sum_{Q\subset Q'\subset R} \beta(Q')^2\geq \alpha^{2}\]
where $\alpha>0$ is a number we will pick later and
\[\beta(Q')^{2}\Theta_\mu(2B_{R}):=\int_{\delta^{-1}B_{Q'}\cap F}\int_{\delta\ell(Q')}^{\delta^{-1}\ell(Q')}\frac{\beta_{\mu,2}(x,r)^{2}}{\mu(Q'\cap F)}\frac{dr}{r}d\mu(x) .\]
\end{enumerate}
For the reader's convenience, let us say that we will choose $\alpha\ll\min(\tau,A^{-1})$.

Recall that $\tree(R)$ is the subfamily of the cubes from $\DD(R)$ which are not strictly contained in any cube
from $\sss(R)$.
The following statement is an immediate consequence of the construction of $\sss(R)$ and $\tree(R)$.

\begin{lemma}\label{lemdobbb1}
If $Q\in\DD$, $\ell(Q)\leq\ell(R)$, and $Q\in \tree(R)\backslash \sss(R)$, then $Q\not\in\LF(R)$ and
$$\tau\,\Theta_\mu(2B_R)\leq \Theta_\mu(2B_Q) \leq A\,\Theta_\mu(2B_R).$$
If moreover $Q\in\DD^{db}$, then $\frac1{28}B_Q$ is $\tau^2$-balanced.
\end{lemma}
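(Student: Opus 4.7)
The plan is to obtain all four assertions as direct contrapositives of the stopping rule used to construct $\sss(R)$. Recall that $\sss(R)$ is defined as the family of \emph{maximal} cubes $Q\subset R$ that satisfy at least one of the stopping conditions (1)--(3). Hence if a cube $Q\subset R$ triggers any of these conditions, then by maximality either $Q\in \sss(R)$ itself or $Q$ is strictly contained in some cube of $\sss(R)$. In either case $Q\notin \tree(R)\setminus\sss(R)$. Therefore any $Q\in \tree(R)\setminus \sss(R)$ must fail every stopping condition, and the whole proof reduces to unpacking what those failures mean.

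First, the failure of stopping condition (2) means $Q\notin \LF(R)$, which is the first conclusion. Next, the failure of condition (1) splits into three pieces. Since $Q\notin\HD(R)$, we have $\Theta_\mu(2B_Q)<A\,\Theta_\mu(2B_R)$; since $Q\notin\LD(R)$, we have $\Theta_\mu(2B_Q)>\tau\,\Theta_\mu(2B_R)$. These together give the density sandwich $\tau\,\Theta_\mu(2B_R)\leq \Theta_\mu(2B_Q)\leq A\,\Theta_\mu(2B_R)$.

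For the last statement, suppose additionally that $Q\in\DD^{db}$. The relevant part of condition (1) is that a doubling cube in the intermediate density range $\tau\,\Theta_\mu(2B_R)<\Theta_\mu(2B_Q)<A\,\Theta_\mu(2B_R)$ would be added to $\sss(R)$ through $\UB(R)$ precisely when $\frac{1}{28}B_Q$ is $\tau^2$-unbalanced. Since we have already established the density bounds for our $Q$, and since $Q\notin\UB(R)$, the only way $Q$ can avoid the stopping rule is that $\frac{1}{28}B_Q$ is $\tau^2$-balanced, which is the last conclusion. (Here I am using the natural reading of the definition of $\UB(R)$ inside the stopping rule: once $Q$ is already known to be doubling and to lie in the good density range, the $\UB$-trigger is purely the unbalancedness of $\frac{1}{28}B_Q$.)

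The argument uses no quantitative estimates whatsoever — it is pure bookkeeping on the definitions. The one subtlety, and the only place one needs to think rather than just cite, is the interpretation of the $\UB(R)$-clause in the stopping rule, because $\HD(R),\LD(R),\UB(R)$ are defined in the Main Lemma as subfamilies of $\sss(R)$ and are then used here to generate $\sss(R)$; once one reads the clause in the non-circular way described above, nothing further is needed and the lemma follows.
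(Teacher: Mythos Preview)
Your proof is correct and matches the paper's approach: the paper itself simply states that the lemma ``is an immediate consequence of the construction of $\sss(R)$ and $\tree(R)$'' and gives no further argument, so your contrapositive unpacking of the stopping conditions is exactly what is intended. Your remark on the apparent circularity in the definition of $\UB(R)$ (defined in the Main Lemma as a subfamily of $\sss(R)$, yet used as a stopping trigger to build $\sss(R)$) and its natural resolution is well observed and is indeed the correct reading.
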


From now on, we say that $Q\in\DD$ is balanced if $B(Q)=\frac1{28}B_Q$ is $\tau^2$-balanced, or just balanced. Otherwise, we say that it is $\tau^2$-unbalanced, or just unbalanced

Note that for $\eta$ small enough, we can guarantee that $R\not\in \sss(R)$. Moreover, observe that the cubes in $\LF(R)$ are disjoint 
and so
\begin{equation}\label{sumLF}
\sum_{Q\in \LF(R)}\mu(Q)
\leq 2\sum_{Q\in \LF(R)} \mu(Q\backslash F)
\leq 2\mu(R\backslash F)<2\alpha.
\end{equation}
Just as well, the cubes in $\mathsf{J}(R)$ are disjoint and thus

\begin{align*}
\Theta_\mu(2B_{R})\alpha^{2}\sum_{Q\in J(R)}\mu(Q)
& \leq  \sum_{Q\in J(R)}\sum_{Q\subset Q'\subset R} \int_{\delta^{-1}B_{Q'}\cap F}\int_{\delta\ell(Q')}^{\delta^{-1}\ell(Q')} \beta_{\mu,2}(x,r)^{2}\frac{dr}{r}d\mu(x) \frac{\mu(Q)}{\mu(Q'\cap F)}\\
& \leq \sum_{Q'\in \tree(R)\backslash \LF(R)} \int_{\delta^{-1}B_{Q'}\cap F}\int_{\delta\ell(Q')}^{\delta^{-1}\ell(Q')} \beta_{\mu,2}(x,r)^{2}\frac{dr}{r}d\mu(x)\frac{\mu(Q')}{\mu(Q' \cap F)}\\
& \leq  2 \sum_{Q'\in \tree(R)} \int_{\delta^{-1}B_{Q'}\cap F}\int_{\delta\ell(Q')}^{\delta^{-1}\ell(Q')}\beta_{\mu,2}(x,r)^{2}\frac{dr}{r}d\mu(x).
\end{align*}

\vv
\subsection{The theorem of David and Toro}

All that remains to show is that we can cover the portion of $F\cap R$ not in any stopped cube by a bi-Lipschitz image 
of $\R^n$ and to control the sum of the cubes in $\LD(R)$, that is,
\begin{equation}\label{sumld}
\sum_{Q\in \LD(R)}\mu(Q)<\tau^{\frac{1}{2}}\mu(R).
\end{equation}

\vv
The main ingredient to proving these two facts is a theorem of David and Toro.  To state this, we need some 
additional notation. Given  two closed sets $E$ and $F$, $x\in \mathbb{R}^{d}$, and $r>0$, we denote
\[
d_{x,r}(E,F)=\frac{1}{r}\max\left\{\sup_{y\in E\cap B(x,r)}\dist(y,F), \sup_{y\in F\cap B(x,r)}\dist(y,E)\right\}.\]
\vv

\begin{theorem}[David, Toro]
For $k\in \N\cup\{0\}$, set $r_{k}=10^{-k}$ and let $\{x_{jk}\}_{j\in J_{k}}$ be a collection of points so that for some $n$-plane $V$,
\[\{x_{j0}\}_{j\in J_{0}}\subset V,\] 
 \[|x_{ik}-x_{jk}|\geq r_{k},\]
and, denoting $B_{jk}=B(x_{j,k},r_{k})$, 
\begin{equation}
x_{ik}\in \bigcup_{j\in J_{k-1}}2B_{jk-1}.
\label{V2}
\end{equation}
To each point $x_{jk}$ associate an $n$-plane $L_{jk}\subset \R^d$  and set
$$
\ve_{k}(x)=\sup\{d_{x,10^{4}r_{l}}(L_{jk},L_{il}): j\in J_{k}, |l-k|\leq 2, i\in J_{l}, x\in 100 B_{jk}\cap 100 B_{il}\}.
$$
There is $\ve_{0}>0$ such that if $\ve\in (0,\ve_{0})$ and
\begin{equation}
\sum_{k\geq 0} \ve_{k}(x)^{2}<\ve\mbox{ for all }x\in \R^{n},
\label{ek}
\end{equation}
then there is an $L$-bi-Lipschitz injection $g:\R^{n}\rightarrow \R^{d}$, with $L=L(n,d)$, so that the set 
\begin{equation}\label{eqahj0}
E_{\infty}=\bigcap_{K=1}^{\infty}\overline{\bigcup_{k= K}^{\infty} \{x_{jk}\}_{j\in J_{k}}}
\end{equation}
is contained in $g(\R^{n})$.

Moreover, $g(x)=\lim_k f_{k}(x)$ where $|f_{k}(x_{jk})-x_{jk}|\lec \ve r_{k}$, and 
\begin{equation}\label{closetog}
\dist(x,g(\R^{d}))\lesssim \ve r_{k}\mbox{\; for all\; }x\in 40B_{jk}\cap L_{jk}.\end{equation}
\label{DT}
\end{theorem}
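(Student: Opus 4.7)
The plan is to follow a Reifenberg-style iterative construction, adapted to the setting where we have only $\ell^{2}$-summability of the flatness numbers $\ve_{k}$ rather than uniform smallness at every scale. We build $g$ as a uniform limit of maps $f_{k}:\R^{n}\to\R^{d}$ produced by composition $f_{k}=\sigma_{k}\circ f_{k-1}$, starting from an initial affine embedding $f_{0}$ identifying $\R^{n}$ with the plane $V$. Each $\sigma_{k}:\R^{d}\to\R^{d}$ is a ``correction map'' at scale $r_{k}=10^{-k}$ whose action is approximately the orthogonal projection onto the nearest plane $L_{jk}$ near each center $x_{jk}$, and the identity far from all of the $x_{jk}$.

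To construct $\sigma_{k}$ concretely, fix a smooth partition of unity $\{\theta_{jk}\}_{j\in J_{k}}$ subordinate to $\{10 B_{jk}\}_{j}$, together with a cutoff $\psi_{k}=1-\sum_{j}\theta_{jk}$ supported where $\dist(x,\bigcup_{j}B_{jk})\gtrsim r_{k}$, with $\|\nabla \theta_{jk}\|_{\infty}\lesssim r_{k}^{-1}$. Denoting by $\pi_{jk}:\R^{d}\to L_{jk}$ the orthogonal projection onto the affine plane through $x_{jk}$, set
\[
\sigma_{k}(x)=\psi_{k}(x)\,x+\sum_{j\in J_{k}}\theta_{jk}(x)\,\pi_{jk}(x).
\]
Because any two planes $L_{j,k}$ and $L_{i,k-1}$ meeting $100 B_{j,k}\cap 100 B_{i,k-1}$ are $\ve_{k-1}$-close there by the definition of $\ve_{k-1}$, the map $\sigma_{k}$ is $O(\ve_{k-1}r_{k-1})$-close to the identity in $C^{0}$ and its derivative is $I+O(\ve_{k-1})$. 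By induction these estimates propagate to $f_{k}$; in particular $\|f_{k}-f_{k-1}\|_{\infty}\lesssim \ve_{k-1}r_{k-1}$, so Cauchy--Schwarz against $\sum_{k}r_{k}^{2}<\infty$ yields uniform convergence $f_{k}\to g$ and the bound $\|g-f_{k}\|_{\infty}\lesssim \ve^{1/2}r_{k}$, from which \eqref{closetog} follows. The containment $E_{\infty}\subset g(\R^{n})$ is then obtained by showing inductively that $f_{k}(\R^{n})$ passes within distance $\lesssim \ve r_{k}$ of every $x_{jk}$, so the limit set contains the set in \eqref{eqahj0}; the compatibility condition \eqref{V2} is what guarantees that this control is not lost from one scale to the next.

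The main obstacle is the bi-Lipschitz estimate with constant $L=L(n,d)$ under only $\ell^{2}$ control of the $\ve_{k}$'s. A naive bound $\|Df_{k}\|\leq \prod_{j\leq k}(1+C\ve_{j})$ would require $\sum \ve_{j}<\infty$, which is strictly stronger than \eqref{ek}. The crucial point is a second-order cancellation between consecutive corrections: because the planes $L_{j,k}$ and $L_{i,k-1}$ are not just close as sets but their orthogonal projections agree to order $O(\ve_{k-1}^{2})$ on vectors nearly tangent to them, the linear part of $D\sigma_{k}$ restricted to the tangent plane of $f_{k-1}(\R^{n})$ equals the identity up to $O(\ve_{k-1}^{2})$ rather than $O(\ve_{k-1})$. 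Consequently the genuine multiplicative distortion of $Df_{k}$ is controlled by $\prod_{j}(1+C\ve_{j}^{2})\leq \exp\bigl(C\sum_{j}\ve_{j}^{2}\bigr)\leq e^{C\ve}$, yielding a bi-Lipschitz constant depending only on $n$ and $d$ once $\ve\leq \ve_{0}$. Making this second-order cancellation quantitative---through a careful Taylor expansion of $\sigma_{k}$ along $f_{k-1}(\R^{n})$ and sharp angular bounds between consecutive planes derived from \eqref{V2} and the defining supremum for $\ve_{k-1}$---is the central technical step, and is precisely what separates the present theorem from the classical Reifenberg construction.
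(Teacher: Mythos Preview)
The paper does not prove this theorem; it is quoted from David and Toro's memoir \cite{DT1} (their Theorem~2.5, with \eqref{closetog} extracted from their Proposition~5.1 and equation~(6.8)), so there is no in-paper proof to compare against. Your outline is, in broad strokes, exactly the David--Toro construction: one builds $g$ as a limit of compositions $f_{k}=\sigma_{k}\circ f_{k-1}$, where each $\sigma_{k}$ is a partition-of-unity blend of the identity with the orthogonal projections onto the local planes $L_{jk}$, and the decisive observation that permits $\ell^{2}$ (rather than $\ell^{1}$) control of the $\ve_{k}$'s is precisely the second-order cancellation you describe --- the tangential part of $D\sigma_{k}$ along $f_{k-1}(\R^{n})$ is $I+O(\ve_{k}^{2})$ rather than $I+O(\ve_{k})$.

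One point to watch: your Cauchy--Schwarz tail estimate yields $\|g-f_{k}\|_{\infty}\lesssim \ve^{1/2}r_{k}$, not the $\ve r_{k}$ appearing in the statement. For the applications in this paper the discrepancy is immaterial (only smallness is ever used), but if you want to match the stated bound you would need to track the local displacements more carefully, as David and Toro do. Beyond that, your sketch correctly identifies both the architecture and the key idea; making the second-order cancellation rigorous is, of course, the content of an entire memoir and could not reasonably be reproduced here.
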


This theorem is a slight restatement of Theorem 2.5 in \cite{DT1}, where the last inequality follows from Proposition 5.1 and equation (6.8) in the same paper.
In \cite{DT1}, the points $x_{j0}$ are allowed to be near some surface $\Sigma_{0}$ (see (2.7) in that paper), so in our case $V=\Sigma_{0}$. Moreover, in our application below, $R$ is assumed to have diameter less than $1$, and so $\{x_{j0}\}_{k\in J_{0}}$ consists of a single point, and thus the condition that $\{x_{j0}\}_{k\in J_{0}}\subset V$ for some $n$-plane $V$ is trivially satisfied. 

\def\bta{\beta_{\mu,2}}

We would like to point out here the versatility of this result. While there is some technical effort to adapting this theorem to our scenario, it is very natural for stopping-time arguments. Traditionally, given a Reifenberg flat topological surface for example, the points $\{x_{jk}\}_{j\in J_{k}}$ are taken to be a nested sequence of maximal $r_{k}$-nets in the surface. The way the theorem is stated, however, does not require this. In fact, the theorem only requires that, when we pick maximally separated points at each scale $r_{k}$, that they are close to points chosen for the previous scale (see \eqref{V2}). Thus, in choosing these $x_{jk}$, we can stop adding points in a specific region and add points elsewhere, much like a stopping-time process.

Our goal now is to pick appropriate choices of $x_{jk}$ and $L_{jk}$ for Theorem \ref{DT}. Roughly speaking, the points $x_{jk}$ correspond to centers of the cubes $Q\in \tree(R)$ and the $n$-planes $L_{jk}$ to the best approximating $n$-planes, and our control on the $\beta_{\mu,2}$-numbers in $\tree(R)$ will help us control $\ve_{k}$. However, this is not quite true, since, for example, the best approximating plane might not pass through or even close to the center of the cube, our cubes decrease at a much faster rate than just $r_{k}$, and moreover, not every cube $Q\in \tree(R)$ is balanced, which is a crucial property we will need to control the angles between nearby planes. Thus, there are many adjustments to be made.

We first remedy the issue of not all cubes being balanced by showing that, for any cube, there is always an ancestor close by that is balanced.

\vv
\begin{lemma}\label{lemdob32}
There is some constant $c_2(A,\tau)>0$ small enough so that for any cube 
$Q\in\tree(R)$  there exists some cube $Q'\supset Q$ such that $Q'\in\DD^{db}\cap \tree(R)\setminus\sss(R)$ (so $Q$ is balanced)
and $\ell(Q')\leq c_2(A,\tau)\,\ell(Q)$.
\end{lemma}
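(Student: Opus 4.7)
The plan is to walk up the chain of dyadic ancestors of $Q$ and show, using Lemma~\ref{lemcad23}, that we reach a doubling cube in $\tree(R)\setminus\sss(R)$ within a bounded number of steps. Set $Q_0=Q$ and let $Q_i$ denote the $i$-th dyadic ancestor of $Q$, defined as long as $Q_i\subset R$; note that $Q_{i_0}=R$ for some $i_0\geq 0$. Let $k$ be the smallest index for which $Q_k\in\DD^{db}\cap\tree(R)\setminus\sss(R)$. Such a $k$ exists because $R$ itself has all three properties: it is doubling by hypothesis, it lies in $\tree(R)$ trivially, and $R\notin\sss(R)$ for $\eta$ small enough (as noted just after \rf{sumLF}). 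Moreover, by the maximality of the cubes in $\sss(R)$, every strict ancestor of $Q$ inside $R$ is automatically in $\tree(R)\setminus\sss(R)$; hence, by the minimality of $k$, the cubes $Q_1,\dots,Q_{k-1}$ all lie in $\tree(R)\setminus\sss(R)$ but are non-doubling.

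If $k\leq 1$, we take $Q':=Q_k$ and are done with $c_2(A,\tau):=A_0$. So assume $k\geq 2$. Lemma~\ref{lemcad23} applied to the pair $Q_1\subsetneq Q_k$ (whose strict intermediaries $Q_2,\dots,Q_{k-1}$ are all non-doubling) gives
\[
\Theta_\mu(100B(Q_1))\lesssim A_0^{-9n(k-2)}\,\Theta_\mu(100B(Q_k)).
\]
Combining this with the density bounds $\Theta_\mu(2B_{Q_1})\geq\tau\,\Theta_\mu(2B_R)$ and $\Theta_\mu(2B_{Q_k})\leq A\,\Theta_\mu(2B_R)$ from Lemma~\ref{lemdobbb1}, together with the easy comparisons $\Theta_\mu(100B(Q_1))\gtrsim \Theta_\mu(2B_{Q_1})$ (trivial from $2B_{Q_1}\subset 100B(Q_1)$) and $\Theta_\mu(100B(Q_k))\sim \Theta_\mu(2B_{Q_k})$ (from the doubling property of $Q_k$), we obtain
\[
\tau\,\Theta_\mu(2B_R)\,\lesssim\, A_0^{-9n(k-2)}\,A\,\Theta_\mu(2B_R).
\]
This yields $A_0^{9n(k-2)}\lesssim A/\tau$, hence $k\leq k_0(A,\tau)$ for an explicit constant $k_0$ depending only on $A$, $\tau$, $A_0$, and $n$.

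Setting $Q':=Q_k$, we therefore have $\ell(Q')=A_0^k\,\ell(Q)\leq A_0^{k_0(A,\tau)}\,\ell(Q) =: c_2(A,\tau)\,\ell(Q)$, proving the claim; the balancedness of $\tfrac{1}{28}B_{Q'}$ is then a direct consequence of the ``moreover'' clause in Lemma~\ref{lemdobbb1}. I do not anticipate any real obstacle here: the argument is essentially bookkeeping on the standard fact that a non-doubling chain forces rapid density decay, and the density lower bound from the definition of $\tree(R)\setminus\sss(R)$ immediately caps the length of such a chain in terms of $A/\tau$. The only mild subtleties are the trivial cases $k=0,1$ (handled separately) and verifying that strict ancestors of a tree cube cannot themselves be stopped, which is immediate from the maximality in the definition of $\sss(R)$.
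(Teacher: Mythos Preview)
Your proof is correct and follows essentially the same approach as the paper's: walk up the chain of ancestors, note that strict ancestors lie in $\tree(R)\setminus\sss(R)$, and use the density decay from Lemma~\ref{lemcad23} together with the lower density bound $\tau\,\Theta_\mu(2B_R)$ to cap the length of any non-doubling chain. Your bookkeeping is in fact a bit more careful than the paper's (which has a minor index mismatch between $i$ and $j$), but the argument is the same.
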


\begin{proof}
Let $Q=Q_0\subset Q_1\subset Q_2\ldots$ be cubes such that each $Q_i$ is son of $Q_{i+1}$.
If $Q_0,Q_1,\ldots ,Q_i$ are not doubling, from the fact that $R\in\DD^{db}$ it follows that al the cubes 
$Q_1,\ldots ,Q_i$ belong to $\tree(R)\setminus\sss(R)$. By Lemma \ref{lemcad23} we have
\begin{align*}
\tau \Theta_\mu(2B_R)& \leq \Theta_\mu(2B_{Q_1})\lesssim
\Theta_\mu(100B(Q_1))\\ &\lesssim A_0^{-9n\, (j-1)}\,\Theta_\mu(100B(Q_j))\lesssim c\,A_0^{-9n\, (j-1)}\,
\Theta_\mu(2B_{Q_{i+1}}),
\end{align*}
and thus $\Theta_\mu(2B_{Q_{j+1}})> A\,\Theta_\mu(2B_R)$ if $j$ is big enough (depending on $A$ and $\tau$), which
contradicts the fact that $Q_{j+1}\in\tree(R)$.
\end{proof}
\vv

Consider $\ve\in (0,\ve_{0})$ to be chosen later. Set 
\[\TT =\{Q\in \tree(R): Q\supsetneq P\mbox{ for some }P\in (\tree(R)\cap \DD^{db})\backslash \sss(R)\}\]
and set  $\TT^{k}=\TT\cap \DD_{k}(R)$.  For $Q\in \tree(R)$, let $\widehat{Q}\in\tree(R)\setminus \sss(R)$ be the smallest cube in $\TT$ containing $Q$, so that  $\wh Q$ is doubling, balanced, and,  by Lemma \ref{lemdob32}, $\ell(\widehat{Q})\lesssim \ell(Q)$.

Let $C=60\ll \delta^{-1}$. For $Q\in\TT$, pick $x_{Q}\in Q\cap F$ such that 
\[ \int_{C^{-1}r(Q)}^{Cr(Q)} \beta_{\mu,2}(x_{Q},r)^{2}\frac{dr}{r}\leq \beta(Q)^{2}\Theta_\mu(2B_{R})\]
and $\rho(Q)\in [(C-1)r(Q),Cr(Q)]$ such that
\[\beta_{\mu,2}(x_{Q},\rho(Q))\lesssim \beta(Q)\Theta_\mu(2B_{R})^{\frac{1}{2}}\]

Observe, that, since $C=60$, and $x_{Q}\in Q\subset B_Q= 28B(Q)=B(z_{Q},28r(Q))$,
\[ B(Q)=B(z_{Q},r(Q))\subset B(x_{Q},r(Q)+28r(Q))\subset B(x_{Q},\rho(Q))\]
and
\[ B(x_{Q},\rho(Q))\subset B(z_{Q},\rho(Q)+28r(Q))\subset B(z_{Q},88r(Q))\subset 100B(Q).\]
Thus, we always have
\begin{equation}\label{bqbxqrq}
B(Q)\subset B(x_{Q},\rho(Q))\subset 100 B(Q).
\end{equation}

Let $L_{Q}$ be an $n$-plane so that 
\begin{equation}\label{betaxqrq}
 \beta_{\mu,2}(x_{Q},\rho(Q))^{2}=  \int_{B(x_{Q},\rho(Q))} \ps{\frac{\dist(y,L_{Q})}{\rho(Q)}}^{2}\,
\frac{d\mu(y)}{\rho(Q)^{n}}.
 \end{equation}
We now are going to assign to each cube $Q\in \TT$ a point $y_{Q}\in Q$ and an $n$-plane $L_{Q}$ passing through $y_{Q}$. 

\begin{enumerate}
\item[(a)] If $Q=\widehat{Q}$, then $Q$ is doubling, so by \eqref{bqbxqrq}, $\mu(B(x_{Q},\rho(Q)))\sim \mu(B(Q))$. Moreover,
\[ \mu(B(Q))\leq \mu(2B_{Q})\leq \mu(100 B(Q))\lesssim \mu(B(Q)),\]
and by Lemma \ref{lemdobbb1} we also have 
\begin{equation}\label{xqrqtheta1}
\mu(B(x_{Q},\rho(Q)))\sim \mu(2B_{Q})=\Theta_\mu(2B_{Q})\ell(Q)^{n}\sim_{A,\tau} \Theta_\mu(2B_{R})\rho(Q)^{n}
\end{equation}
Thus, by Chebyshev's inequality and \eqref{betaxqrq}, there is $y_{Q}\in B(Q)$ so that 
\[\dist(y_{Q},L_{Q})\lesssim \frac1{\Theta_\mu(2B_{R})^{1/2}}\,\beta_{\mu,2}(x_{Q},\rho(Q)) \rho(Q) \lesssim \beta(Q)\ell(Q).\]
Set $L^{Q}$ be the $n$-plane parallel to $L_{Q}$ containing $y_{Q}$.
\vv

\item[(b)] If $Q\neq \widehat{Q}$, let $Q'\in \tree(R)\backslash \sss(R)$ be a doubling cube properly contained in $Q$ with maximal side length (this exists by our definition of $\TT$). Then 
\[ 2B_{Q'}= B(z_{Q'},56r(Q'))\subset 100 B(Q')\]
and since $Q'$ is doubling,
\[ \mu(2B_{Q'})\leq \mu(100 B(Q'))\lesssim \mu(Q') \leq \mu(2B_{Q'}).\]
Since $z_{Q}\in Q\subset \widehat{Q}\subset B_{Q}$,
\[ 
B(x_{\widehat{Q}},\rho(\widehat{Q}))\subset B(x_{\widehat{Q}},60 r(\widehat{Q}))
\subset B(z_{\widehat{Q}},(28+60)r(\widehat{Q})) \subset 100B(\widehat{Q}).\]
Also, taking into account that $Q',\widehat{Q}\not\in \sss(R)$, 
\[ 
\mu(B(x_{\widehat{Q}},\rho(\widehat{Q})))
\leq \mu(100 B(\widehat{Q}))\lec \mu(\widehat{Q})\leq \mu(2B_{\widehat{Q}}).\]
Since $\ell(Q')\sim_{A,\tau} \ell(Q)\sim_{A,\tau} \ell(\widehat{Q})$ by Lemma \ref{lemdob32}, we have by Lemma \ref{lemdobbb1} and the previous case applied to $\widehat{Q}$,
\begin{align}\label{QQ'Qhat}
\mu(2B_{\widehat{Q}}) 
& \sim_{A,\tau} \mu(2B_{Q'})
\lec \mu(Q')
\leq \mu(Q)
\leq \mu (B(x_{\widehat{Q}},\rho(\widehat{Q})))
\notag \\
&\lec  \mu(2B_{\widehat{Q}})\sim_{A,\tau} \Theta_\mu(B_{R})\ell(\widehat{Q})^{n}  \sim_{A,\tau} \Theta_\mu(2B_{R})\ell(Q)^{n}.\end{align}
Thus, we can use Chebychev's inequality to find $y_{Q}\in Q$ so that 
\[
\dist(y_{Q},L_{\widehat{Q}})\lec_{A,\tau}  \frac{\beta_{\mu,2}(x_{\widehat{Q}},\rho(\widehat{Q}))\rho(\widehat{Q})}{\Theta_\mu(2B_{R})^{\frac{1}{2}}}\lec_{A,\tau}  \beta(\widehat{Q})\ell(\widehat{Q}).\]
We now let $L^{Q}$ be the $n$-plane parallel to $L_{\widehat{Q}}$ but containing $y_{Q}$.  
\end{enumerate}
Observe that, after replacing $L_{Q}$ with $L^{Q}$ in either of these cases, we still have 
\begin{equation}\label{L^Q}
 \int_{B(x_{\widehat{Q}},\rho(\widehat{Q}))} \ps{\frac{\dist(y,L^{Q})}{\rho(\widehat{Q})}}^{2}\,\frac{d\mu(y)}{\rho(\widehat{Q})^n}\lec_{A,\tau} \beta_{\mu,2}(x_{\widehat{Q}},\rho(\widehat{Q}))^{2}\lec_{A,\tau} \beta(\widehat{Q})^{2}\Theta_\mu(2B_{R}).
\end{equation}

We need now to estimate the angles between the $n$-planes $L^{Q}$ corresponding to cubes $Q$ that are near each other. This task is carried out in the next two lemmas. The first one is a well known general result alluded to at the end of Section 5 in \cite{DS1}, without proof. For the reader's convenience, we include a proof in the Appendix.

\begin{lemma}\label{angles}
Suppose $P_{1}$ and $P_{2}$ are $n$-planes in $\R^{d}$ and $X=\{x_{0},...,x_{n}\}$ are points so that
\begin{enumerate}
\item[(a)] $\eta=\eta(X)=\min\{\dist(x_{i},\spn X\backslash\{x_{i}\})/\diam X\in (0,1)$ and
\item[(b)] $\dist(x_{i},P_{j})<\ve\,\diam X$ for $i=0,...,n$ and $j=1,2$, where $\ve<\eta d^{-1}/2$.
\end{enumerate}
Then
\begin{equation}
\dist(y,P_{1}) \leq \ve\ps{\frac{2d}{\eta}\dist(y,X)+\diam X}.
\end{equation}
\end{lemma}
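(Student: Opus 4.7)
The plan is to compare $y$ with a nearby point in $P_1$ constructed via barycentric coordinates relative to projections of $X$. The statement is meaningful only when $y$ is close to one of the planes (for arbitrary $y$ far from both, the left-hand side grows linearly in $|y|$ while the right-hand side grows with coefficient $\ve$, and $\ve<\eta/(2d)$ can be tiny), and the natural reading, consistent with how the lemma is invoked later, is $y\in P_2$. I will work under this assumption.

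Let $\pi_j:\R^d\to P_j$ be the orthogonal projections and set $\tilde x_i:=\pi_1(x_i)\in P_1$, $\hat x_i:=\pi_2(x_i)\in P_2$. By (b), $|x_i-\tilde x_i|,|x_i-\hat x_i|<\ve\diam X$, and hence $|\tilde x_i-\hat x_i|<2\ve\diam X$. The key preliminary is a perturbation-stability claim: since $\ve<\eta/(2d)$, the perturbed families $\tilde X=\{\tilde x_i\}$ and $\hat X=\{\hat x_i\}$ remain quantitatively affinely independent, with
$$\dist(\hat x_i,\spn(\hat X\setminus\{\hat x_i\}))\ge\tfrac{\eta}{2}\diam X,$$
and analogously for $\tilde X$. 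In particular $\spn\tilde X=P_1$ and $\spn\hat X=P_2$, since each is an $n$-dimensional affine hull of $n+1$ affinely independent points lying in an $n$-plane. Now write $y\in P_2=\spn\hat X$ in barycentric coordinates $y=\sum_{i=0}^n\mu_i\hat x_i$ with $\sum_i\mu_i=1$, and set $\tilde y:=\sum_i\mu_i\tilde x_i\in P_1$. Then
$$\dist(y,P_1)\le|y-\tilde y|=\Big|\sum_i\mu_i(\hat x_i-\tilde x_i)\Big|\le 2\ve\diam X\cdot\sum_i|\mu_i|.$$

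To bound $\sum_i|\mu_i|$, I use the classical barycentric identity
$$|\mu_i|=\frac{\dist(y,\spn(\hat X\setminus\{\hat x_i\}))}{\dist(\hat x_i,\spn(\hat X\setminus\{\hat x_i\}))}.$$
The denominator is at least $\eta\diam X/2$ by the stability step. For the numerator, pick any $j\neq i$ with $|y-x_j|\le\dist(y,X)+\diam X$ (take the point of $X$ realizing $\dist(y,X)$ if it is not $x_i$, otherwise use the triangle inequality), so that
$$\dist(y,\spn(\hat X\setminus\{\hat x_i\}))\le|y-\hat x_j|\le\dist(y,X)+(1+\ve)\diam X.$$
Summing the $n+1\le d+1$ terms produces $\sum_i|\mu_i|\lec d(\dist(y,X)+\diam X)/(\eta\diam X)$. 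Substituting into the previous display yields $\dist(y,P_1)\lec(d\ve/\eta)(\dist(y,X)+\diam X)$; careful bookkeeping at each step recovers the precise constants $2d/\eta$ and $1$ appearing in the statement. The main obstacle is the perturbation-stability estimate for the denominator, which can be verified by a volume argument comparing the $n$-simplex spanned by $\hat X$ with that of $X$, or by an induction on the dimension. The smallness condition $\ve<\eta/(2d)$ is tailored exactly for this, with the factor $d$ reflecting the aggregation of at most $d$ pointwise perturbations that can shift any single affine hyperplane in the definition of $\eta$.
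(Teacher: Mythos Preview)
Your approach is correct and is essentially the affine rephrasing of the paper's linear-algebra argument. The paper reduces to $x_0=0\in P_1$, defines linear maps $A,B:\R^n\to\R^d$ by $A(e_i)=x_i$ and $B(e_i)=\pi_{P_2}(x_i)$, and for $y=B(z)\in P_2$ estimates $\dist(y,P_1)\le |A(z)-B(z)|\le \ve d^{1/2}|z|\diam X$ together with $|z|\le |B^{-1}|\,|y|$; this is exactly your barycentric map $\sum\mu_i\hat x_i\mapsto\sum\mu_i\tilde x_i$ written in coordinates after translating $x_0$ to the origin.

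The one place where the two presentations diverge is your ``perturbation-stability'' step. You state $\dist(\hat x_i,\spn(\hat X\setminus\{\hat x_i\}))\ge\tfrac{\eta}{2}\diam X$ and defer it to a volume or inductive argument. The paper bypasses this by bounding $|B^{-1}|$ directly from $|A^{-1}|$ via the one-line estimate $\inf_{|z|=1}|Bz|\ge\inf_{|z|=1}|Az|-\|A-B\|\ge n^{-1/2}\eta\diam X-\ve d^{1/2}\diam X$, which is cleaner and avoids any separate stability lemma. Your route works too, but the paper's handling of this step is what makes the smallness condition $\ve<\eta/(2d)$ transparent. Both proofs are a little loose on the exact constants in the final inequality; this is harmless since the lemma is only applied with implicit constants depending on $A,\tau$.
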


The next lemma tailors the previous one to our setting.

\begin{lemma}\label{distlqlemma}
Suppose $Q_{1},Q_{2}\in \TT\cap \DD^{db}$ are such that $\widehat{Q}_{i}=Q_{i}$ and $\dist(Q_{1},Q_{2})\lesssim \ell(Q_{1})\sim \ell(Q_{2})$. Let $P\in \TT\cap \DD^{db}$ be the smallest cube such that $B(x_{P},\rho(P))\supset B(x_{Q_{1}},\rho(Q_{1}))\cup B(x_{Q_{2}},\rho(Q_{2}))$. Then $\ell(P)\sim \ell(Q_{1})\sim \ell(Q_{2})$ and
\begin{multline}\label{distlq}
\dist(y,L^{Q_{1}})\lesssim_{A,\tau} \beta(P)\bigl(\dist(y,{Q}_{1})+\ell({Q}_{1})+\dist(y,{Q}_{2})+\ell({Q}_{2}))\\
\leq\alpha\,(\dist(y,{Q}_{1})+\ell({Q}_{1})+\dist(y,{Q}_{2})+\ell({Q}_{2})\bigr)\;
 \mbox{ for all }y\in L^{Q_{2}}.
\end{multline}
\end{lemma}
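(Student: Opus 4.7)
My strategy would be to show, via Lemma \ref{angles}, that both $L^{Q_{1}}$ and $L^{Q_{2}}$ make a small angle with the intermediate plane $L^{P}$, and then combine the two comparisons by triangle inequality. First I would establish $\ell(P)\sim_{A,\tau}\ell(Q_{1})\sim \ell(Q_{2})$: since $B(x_{Q_{i}},\rho(Q_{i}))\subset 100 B(Q_{i})$ and $\dist(Q_{1},Q_{2})\lesssim \ell(Q_{1})$, both balls $B(x_{Q_{i}},\rho(Q_{i}))$ lie inside a ball of radius $c\,\ell(Q_{1})$ centered on $Q_{1}$. Walking up the dyadic ancestors of $Q_{1}$ and applying Lemma \ref{lemdob32} a bounded (in $A,\tau$) number of times produces an ancestor in $\TT\cap\DD^{db}$ of side length $\sim_{A,\tau}\ell(Q_{1})$ whose associated ball $B(x,\rho)$ engulfs both balls; the minimal such cube is $P$, and the matching lower bound comes from $\rho(P)\sim\ell(P)\geq \rho(Q_{i})\sim\ell(Q_{i})$. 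As a key second preliminary, since $Q_{i}\subset P$ with comparable side lengths and both doubling, the integration region defining $\beta(Q_{i})$ sits inside an enlarged version of that for $\beta(P)$ while $\mu(Q_{i}\cap F)\sim_{A,\tau}\mu(P\cap F)$; this yields $\beta(Q_{i})^{2}\lesssim_{A,\tau}\beta(P)^{2}$.

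\textbf{Well-spread points close to both planes.} Next I would construct $n+1$ points in $B(Q_{1})$ that are simultaneously close to both $L^{Q_{1}}$ and $L^{P}$. Since $Q_{1}\in \TT\cap\DD^{db}$, Lemma \ref{lemdobbb1} gives that $B(Q_{1})$ is $\tau^{2}$-balanced, and Lemma \ref{lembalan2}(a) produces $\xi_{0},\dots,\xi_{n}\in B(Q_{1})$ with $\dist(\xi_{k},L_{k-1})\geq \tau^{2}r(B(Q_{1}))$ and $\mu(B_{k}\cap B(Q_{1}))\geq \varepsilon(\tau^{2})\mu(Q_{1})$, where $B_{k}=B(\xi_{k},t_{0}(\tau^{2})r(B(Q_{1})))$. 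Using $\mu(Q_{1})\sim_{A,\tau}\Theta_{\mu}(2B_{R})\ell(Q_{1})^{n}$, the containment $B(Q_{1})\subset B(x_{P},\rho(P))\cap B(x_{Q_{1}},\rho(Q_{1}))$, the estimates \eqref{L^Q} for $P$ and $Q_{1}$, and the $\beta$-comparison above, I would obtain
\[\int_{B(Q_{1})}\bigl(\dist(y,L^{P})^{2}+\dist(y,L^{Q_{1}})^{2}\bigr)\,d\mu(y)\lesssim_{A,\tau}\beta(P)^{2}\,\Theta_{\mu}(2B_{R})\,\ell(Q_{1})^{n+2}.\]
Chebyshev's inequality on each $B_{k}\cap B(Q_{1})$ then produces $z_{k}\in B_{k}\cap B(Q_{1})$ with $\dist(z_{k},L^{P}),\,\dist(z_{k},L^{Q_{1}})\lesssim_{A,\tau}\beta(P)\,\ell(Q_{1})$, and by Remark \ref{remt0g} the set $X:=\{z_{0},\dots,z_{n}\}$ retains separation $\eta(X)\gtrsim \tau^{2}$.

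\textbf{Conclusion via Lemma \ref{angles}.} I would apply Lemma \ref{angles} with $(P_{1},P_{2})=(L^{Q_{1}},L^{P})$ and this $X$ (so $\diam X\lesssim \ell(Q_{1})$ and $\varepsilon\lesssim_{A,\tau}\beta(P)$) to deduce, for every $y\in\R^{d}$,
\[\dist(y,L^{Q_{1}})+\dist(y,L^{P})\lesssim_{A,\tau}\beta(P)\bigl(\dist(y,Q_{1})+\ell(Q_{1})\bigr).\]
Running the same construction with $Q_{2}$ in place of $Q_{1}$ would give
\[\dist(y,L^{P})\lesssim_{A,\tau}\beta(P)\bigl(\dist(y,Q_{2})+\ell(Q_{2})\bigr)\quad\text{for every }y\in\R^{d}.\]
Finally, for $y\in L^{Q_{2}}$, I would pick $y'\in L^{P}$ with $|y-y'|=\dist(y,L^{P})$ and combine the two displays, using $\dist(y',Q_{1})\leq |y-y'|+\dist(y,Q_{1})$ and $\beta(P)\ll 1$ to absorb cross terms, to get
\[\dist(y,L^{Q_{1}})\leq |y-y'|+\dist(y',L^{Q_{1}})\lesssim_{A,\tau}\beta(P)\bigl(\dist(y,Q_{1})+\ell(Q_{1})+\dist(y,Q_{2})+\ell(Q_{2})\bigr);\]
the second bound in \eqref{distlq} follows since $\beta(P)<\alpha$ (as $P\in\TT$ forces $P\notin\mathsf{J}(R)$). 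The main technical hurdle I anticipate is the $\beta$-comparison of the first paragraph: verifying carefully that integration domains and masses scale compatibly under the passage from $Q_{i}$ to $P$, so that the $A,\tau$-constants do not spoil the Chebyshev step, where a single point must be close to \emph{both} planes at once.
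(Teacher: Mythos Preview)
Your overall strategy—balanced points in $B(Q_{1})$, Chebyshev to find points close to both $L^{Q_{1}}$ and $L^{P}$, Lemma~\ref{angles}, then a triangle-inequality pass through $L^{P}$—matches the paper's proof. But the ``key second preliminary'' $\beta(Q_{i})^{2}\lesssim_{A,\tau}\beta(P)^{2}$ is both unnecessary and problematic. The quantity $\beta(Q)^{2}$ is an average over $x\in\delta^{-1}B_{Q}\cap F$ of $\int_{\delta\ell(Q)}^{\delta^{-1}\ell(Q)}\beta_{\mu,2}(x,r)^{2}\,dr/r$; when $\ell(P)>\ell(Q_{1})$ the radial interval $[\delta\ell(Q_{1}),\delta^{-1}\ell(Q_{1})]$ is \emph{not} contained in $[\delta\ell(P),\delta^{-1}\ell(P)]$ (its lower endpoint dips below), so the ``integration region sits inside'' claim fails and there is no monotonicity to exploit.

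The paper sidesteps this entirely. From \eqref{L^Q} it uses only the \emph{first} inequality,
\[
\int_{B(x_{Q_{1}},\rho(Q_{1}))}\Bigl(\tfrac{\dist(y,L^{Q_{1}})}{\rho(Q_{1})}\Bigr)^{2}\,\frac{d\mu(y)}{\rho(Q_{1})^{n}}\lesssim_{A,\tau}\beta_{\mu,2}(x_{Q_{1}},\rho(Q_{1}))^{2},
\]
and then bounds $\beta_{\mu,2}(x_{Q_{1}},\rho(Q_{1}))$ by inserting $L^{P}$ as a competitor plane in its defining infimum: since $B(x_{Q_{1}},\rho(Q_{1}))\subset B(x_{P},\rho(P))$ and $\rho(Q_{1})\sim_{A,\tau}\rho(P)$, this yields $\beta_{\mu,2}(x_{Q_{1}},\rho(Q_{1}))^{2}\lesssim\beta_{\mu,2}(x_{P},\rho(P))^{2}\lesssim\beta(P)^{2}\Theta_{\mu}(2B_{R})$ directly. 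So the Chebyshev step already produces points with $\dist(z_{k},L^{Q_{1}})\lesssim_{A,\tau}\beta(P)\,\ell(Q_{1})$ without ever touching $\beta(Q_{1})$. This is exactly the ``hurdle'' you flagged, and the paper's trick is simply not to climb it.

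One minor point: the display you label ``for every $y\in\R^{d}$'' is too strong—Lemma~\ref{angles} only controls $\dist(y,P_{1})$ for $y\in P_{2}$, not for arbitrary $y$. The paper records the four one-sided estimates ($y\in L^{Q_{i}}\Rightarrow\dist(y,L^{P})$ small and $y\in L^{P}\Rightarrow\dist(y,L^{Q_{i}})$ small, $i=1,2$) and then combines them exactly as you do in your last paragraph, through a nearest point $y'\in L^{P}$; just don't overclaim beforehand.
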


\begin{proof}
Note that since $B(x_{R},\rho(R))\supset B(x_{Q_{1}},\rho(Q_{1}))\cup B(x_{Q_{2}},\rho(Q_{2}))$, Lemma \ref{lemdob32}  implies $P$ is well defined and $B(x_{P},\rho(P))\supset B(x_{Q_{1}},\rho(Q_{1}))\cup B(x_{Q_{2}},\rho(Q_{2}))$. Moreover, observe that $\beta(P)<\alpha$ since $P\not\in \mathsf{J}(R)$.

Let $x_{0},...,x_{n}\in Q_{1}$ be the points from Lemma \ref{lembalan} for the cube $Q=Q_{1}$ with  $\gamma=\tau^2$
and $t=t_0(\gamma)$ (see Remark \ref{remt0g}).
Then by \eqref{xqrqtheta1},
 \begin{align*}
\mu(B(x_{i},t\rho(Q_{1}))) &\geq \ve(t)\, \mu(Q_{1})\\
&\gtrsim_{A,\tau} \mu(B(x_{Q_{1}},\rho(Q_{1})))\sim _{A,\tau}\Theta_\mu(2B_{Q_{1}})\ell(Q_{1})^{n}\sim_{A,\tau} \Theta_\mu(2B_{R})\ell(Q_{1})^{n},
\end{align*}
and so
\begin{align*}
\avint_{B(x_{i},t\rho(Q_{1}))} \ps{\frac{\dist(x,L^{Q_{1}})}{t\rho(Q_{1})}}^{2}d\mu(x)
& \lec_{A,\tau} \int_{B(x_{Q_{1}}, \rho(Q_{1}))}\ps{\frac{\dist(x,L^{Q_{1}})}{\rho(Q_{1})}}^{2} \frac{d\mu(x)}{\Theta_\mu(2B_{R})\rho(Q_{1})^{n}}\\
& \lec_{A,\tau} \frac{\beta_{\mu,2}(x_{Q_{1}},\rho(Q_{1}))^{2}}{\Theta_\mu(2B_{R})}.
\end{align*}
Observe that since $B(x_{Q_{1}},\rho(Q_{1}))\subset B(x_{P},\rho(P))$, we have by \eqref{QQ'Qhat}
\begin{align*}
\beta_{\mu,2}(x_{Q_{1}},\rho(Q_{1}))^{2}
& \leq \avint_{B(x_{Q_{1}}, \rho(Q_{1}))}\ps{\frac{\dist(x,L^{P})}{\rho(Q_{1})^{n+1}}}^2d\mu(x)\\
&\lesssim \avint_{B(x_{P},\rho(P))}\left(\frac{\dist(x,L^{P})}{\rho(P)^{n+1}}\right)^2d\mu(x)
=\beta_{\mu,2}(x_{P},\rho(P))^{2} \lesssim \beta(P)^{2}\Theta_\mu(2B_{R}).
\end{align*}
Thus,
\[
\avint_{B(x_{i},t\rho(Q_{1}))}\ps{\frac{\dist(x,L^{Q_{1}})}{t\rho(Q_{1})}}^{2}d\mu(x)\lesssim_{A,\tau} \beta(P)^{2}.\]
Similarly,
\[
\avint_{B(x_{i},t\rho(Q_{1}))}\ps{\frac{\dist(x,L^{P})}{t\rho(Q_{1})}}^{2}d\mu(x) 
\lec_{A,\tau} \beta(P)^{2}.
\]
Using Chebyshev's inequality, we may find $y_{i}\in B(x_{i},t\rho(Q_{1}))\cap \supp\mu$ such that
\[\max\{\dist(y_{i},L^{Q_{1}}),\dist(y_{i},L^{P})\}\lesssim_{A,\tau} \beta(P).\]
From the definition of $t_0(\gamma)$, we can guarantee that, independently of our choice of $y_{i}\in B(x_{i},tr)$, if $L_{k}^{y}$ denotes the $k$-plane containing $y_{0},...,y_{k}$, then $\dist(y_{k},L_{k-1}^{y})\geq \tau^2 \,r/2$. By Lemma \ref{angles}, it follows that
\[\dist(y,L^P)\lesssim_{A,\tau} \beta(P)(\dist(y,Q_{1})+\ell(Q_{1})) \mbox{ for all }y\in L^{Q_{1}}\]
and
\[\dist(y,L^{Q_{1}})\lesssim_{A,\tau} \beta(P)(\dist(y,Q_{1})+\ell(Q_{1})) \mbox{ for all }y\in L^P.\]
With the roles of $Q_{1}$ and $Q_{2}$ reversed, we also get 
\[\dist(y,L^P)\lesssim_{A,\tau} \beta(P)(\dist(y,Q_{2})+\ell(Q_{2})) \mbox{ for all }y\in L^{Q_{2}}\]
and
\[\dist(y,L^{Q_{2}})\lesssim_{A,\tau} \beta(P)(\dist(y,Q_{2})+\ell(Q_{2})) \mbox{ for all }y\in L^P.\]
Thus, by the triangle inequality, we obtain \eqref{distlq}.
\end{proof}
\vv

For $r_k=10^{-k}$, $k\in\N$, pick $s(k)$ so that $56C_{0}A_{0}^{-s(k)}\leq r_{k}< 56C_{0}A_{0}^{-s(k)+1}$ and let $\{x_{jk}\}_{j\in J_{k}}$ be a maximally $r_{k}$-separated subset of $\{y_{Q}:Q\in \TT^{s(k)}\}$, set $Q_{jk}$ to be the cube in $\TT^{s(k)}$ so that $x_{jk}=y_{Q_{jk}}$, and let $B_{jk}$ be as in Theorem \ref{DT}. \\

We claim that the points $x_{jk}$ satisfy \eqref{V2}. So let $x_{jk}$ be one of our points. If $s(k)=s(k-1)$, then $x_{jk}=y_{Q_{jk}}$ for some $Q_{jk}\in \TT^{s(k)}=\TT^{s(k-1)}$, but since the $x_{i,k-1}$ are a maximal $r_{k-1}$-net for $\{y_{Q}:Q\in \TT^{s(k-1)}\}$ and $s(k)=s(k-1)$, we know $x_{jk}\in B_{i,k-1}$ for some $i\in J_{k-1}$, which finishes this case. If $s(k)<s(k-1)$,  let $P\in \TT^{s(k-1)}$ be the unique (and strictly larger) ancestor of $Q_{jk}$ in $\tree(R)$ (note that we may assume that such an ancestor exists, for otherwise $Q_{jk}=R$, $\{x_{ik}\}_{i\in J_{k}}$ consists only of $y_{R}$, so $x_{jk}=y_{R}$, but moreover, $\{x_{i,k-1}\}_{i\in J_{k-1}}=\{y_{R}\}$, and so we trivially have \eqref{V2}).  Then $y_{P}\in B_{i,k-1}$ for some $i\in J_{k-1}$ since $\{x_{i,k-1}\}_{i\in J_{k-1}}$ is a maximal $r_{k-1}$-net in $\{y_{T}:T\in \DD^{s(k-1)}\}$. Moreover, 
\[\diam P\leq \diam B_{P} \leq 56C_{0}A_{0}^{-s(k-1)}\leq r_{k-1},\]
and since $x_{jk}\in Q_{jk}\subset P$ and $Q_{jk}\cap B_{i,k-1}\neq \emptyset$, the above estimate implies $x_{jk}\in 2B_{i,k-1}$, and this proves the claim.\\

Set $L_{jk}=L^{\widehat{Q}_{jk}}$. In order to apply Theorem \ref{DT}, we need to check that the estimate \eqref{ek} holds. 
For a given $x\in \mathbb{R}^{d}$, fix $k_{0}$ and pick $x_{j_{0}k_{0}}$ so that $x\in 100 B_{j_{0}k_{0}}$, if it exists.

Suppose $x\in 100 B_{jk}\cap 100 B_{lm}$ for some $k\leq k_{0}$, $|k-m|\leq 2$, $j\in J_{k}$, $l\in J_{m}$, and $k\leq m$. Let $Q_{j_{0}k_{0}}^{k}$ denote the ancestor of $Q_{j_{0}k_{0}}$ in $\TT^{s(k)}$, and let $P_{k}\supset Q_{j_{0}k_{0}}^{k}$ be an ancestor that is doubling and such that $B(x_{P_{k}},r_{P_{k}})\supset B(x_{\widehat{Q}_{jk}},r_{\widehat{Q}_{jk}}) \cup B(x_{\widehat{Q}_{lm}},r_{\widehat{Q}_{lm}})$ and $\ell(P_{k})\lec \ell (Q_{j_{0}k_{0}}^{k})\sim \ell(\widehat{Q}_{jk})\sim \ell(\widehat{Q}_{lm})$. By Lemma \ref{distlqlemma}, we have  
\begin{equation}\label{distlqwidehat}
\dist(y,L_{jk})\lesssim_{A,\tau} \beta(P_{k})\,(\dist(y,\widehat{Q}_{jk})+\ell(\widehat{Q}_{jk})+\dist(y,\widehat{Q}_{lm})+\ell(\widehat{Q}_{lm}))\quad \mbox{for all }y\in L^{Q_{lm}},
\end{equation}
as well as the same inequality if we trade the roles of $\widehat{Q}_{jk}$ and $\widehat{Q}_{lm}$. Note that $\widehat{Q}_{jk}$ and $\widehat{Q}_{lm}$ are at a distance at most $100r_{k}$ from $x$ and have side lengths comparable to $r_{k}$, hence
\[\dist(y,L_{jk})\lesssim_{A,\tau} \beta(P_{k})(|y-x|+r_{k})\quad \mbox{\,for all\,}\quad y\in L_{lm}\]
and from this it is not difficult to show
\[d_{x_{jk},10^{4}r_{l}}(L_{jk},L_{lm})\lesssim_{A,\tau} \beta(P_{k}).\]
Taking the maximum over all $x_{jk}$ and $x_{ml}$ with $x\in 100 B_{jk}\cap 100 B_{lm}$, $|k-m|\leq 2$, $j\in J_{k}$, $l\in J_{m}$, and $m\geq k$ (we let $k$ stay fixed), we get $\ve_{k}(x)\lesssim_{A,\tau} \beta(P_{k})$.

Note that for any cube $P$ there can be at most a bounded number of cubes $P_k$ (depending on $A_{0}$ and $C_{0}$) for which $P_{k}=P$. Therefore, since $\TT$ contains no cubes in $\mathsf{J}(R)$,
\[\sum_{k=0}^{k_{0}} \ve_{k}(x)^{2}\lesssim_{A,\tau} \sum_{Q_{j_{0}k_{0}}\subset P\subset R}\beta(P)^{2}< \alpha^{2},\]
and since $k_{0}$ is arbitrary, we also get $\sum_{k=0}^{\infty} \ve_{k}(x)^{2}\lesssim_{A,\tau} \alpha^{2}$.
Hence, for $\alpha>0$ small enough, this sum is less than $\ve$ and \rf{ek} is fulfilled. \\

Now we can apply Theorem \ref{DT} to obtain an $L$-bi-Lipschitz homeomorphism $g:\mathbb{R}^{n}\rightarrow \mathbb{R}^{d}$,  where $L$ is a universal constant, so that
the set $E_\infty$ from \rf{eqahj0} is contained in $g(\R^n)$ and \rf{closetog} holds.
 Set $\Gamma_R=g(\mathbb{R}^{d})$. Note that if $x\in F$ is not contained in a cube from $\sss(R)$, then it is contained in infinitely many cubes from $\tree(R)$ and hence infinitely many cubes from $\TT$. Thus, we can write $x$ as the limit of a sequence $y_{Q_{k}}$ where $x\in Q_{k}\in \TT^{k}$, and $y_{Q_{k}}\in B_{j_{k}x_{k}}$ for some $j_{k}\in J_{k}$. Therefore, we can write $x$ as a limit of the form $x=\lim x_{j(k),k}$ for some $j(k)\in J_{k}$, which implies $F\subset E_{\infty}\subset \Gamma_R$.\vv

\subsection{The small measure of the cubes from $\LD(R)$}
All that is left to do now now is control the measure of the low-density cubes. To this end, we will show first that most of the
measure of $R$ lies close to the surface $\Gamma_R$. To the authors' surprise, 
the arguments below work with $\beta_{\mu,2}$ but not with $\beta_{\mu,p}$ with $p<2$.
This seems to indicate a subtle difference between these coefficients.

Let 
\[ \mathsf{Far}=\bigl\{x\in R: \dist(x,L^{Q})\geq \alpha^{1/2}\ell(Q) \mbox{ for some }Q\in \TT\cap\DD^{db}\bigr\}.\]
By Chebyshev's inequality we have
\[
\alpha^{1/2}\,\mu(\mathsf{Far})
 \leq \int_{R}\ps{\sum_{x\in Q\in \TT\cap\DD^{db}}\ps{\frac{\dist(x,L^{Q})}{\ell(Q)}}^{2}}^{\frac{1}{2}}d\mu(x).\]
 By Cauchy-Schwarz, the right hand side is at most
\[ \ps{\int_{R}\sum_{Q\in \TT\cap\DD^{db}}\ps{\frac{\dist(x,L_{Q})}{\ell(Q)}}^{2}d\mu(x)}^{\frac{1}{2}}\mu(R)^{\frac{1}{2}}.\]
Since $\ell(Q)\sim_{A,\tau} \rho(Q)$ and $\mu(2B_{Q})=\Theta_\mu(2B_{Q})r(2B_{Q})^n\sim_{A,\tau} \Theta_\mu(2B_{R})\rho(Q)^{n}$ by Lemma \ref{lemdobbb1}, we get that the above does not exceed
\[
c(A,\tau) \ps{\sum_{Q\in \TT \cap\DD^{db}}\int_{B(x_{Q},\rho(Q))}\ps{\frac{\dist(x,L^{Q})}{r_Q}}^{2}\frac{d\mu(x)}{\rho(Q)^{n}} \,\frac{\mu(2B_{Q})}{\Theta_\mu(2B_{R}) }}\mu(R)^{\frac{1}{2}}.\]
By \eqref{L^Q} the last integral does not exceed $c(A,\tau)\,\beta(Q)^2\Theta_\mu(2B_{R})$, and so the above inequalities imply 
 \begin{align*}
\alpha^{1/2}\,\mu(\mathsf{Far}) 
&  \lesssim_{A,\tau}\ps{\sum_{Q\in \TT\cap\DD^{db}}\beta(Q)^{2} \mu(2B_{Q}) }^{\frac{1}{2}} \mu(R)^{\frac{1}{2}}
 \lesssim_{A,\tau} \ps{\sum_{Q\in \TT\cap\DD^{db}}\beta(Q)^{2} \mu(Q)}^{\frac{1}{2}}\mu(R)^{\frac{1}{2}}\\
 & =\ps{\int_{R}\sum_{x\in Q\in \TT\cap\DD^{db}}\beta(Q)^{2}d\mu(x)}^{\frac{1}{2}}\mu(R)^{\frac{1}{2}}
  \leq\alpha\,\mu(R),
\end{align*}
where in the last inequality we used the fact that no cube in $\TT$ is in $\mathsf{J}(R)$. Thus, for $\alpha$ small enough (depending on $A,\tau$), we have 
\[\mu(\mathsf{Far})\leq\frac12\,\tau^{\frac{1}{2}}\mu(R).\]

So to prove \eqref{sumld} it suffices to show
\begin{equation}
\sum_{Q\in \LD_{close}(R)\neq\emptyset}\mu(Q)\leq \frac12\,\tau^{\frac{1}{2}}\mu(R), \;\; \mbox{ where }\LD_{close}(R)=\{Q\in \LD(R): Q\backslash \mathsf{Far}\neq\emptyset\}.
\label{ldnotfar}
\end{equation}
We claim that it suffices to prove that for each $Q\in \LD_{close}(R)$ there is a point
\begin{equation}\label{closetogamma}
 \xi_{Q}\in \frac{3}{2}B_{Q}\cap \Gamma_R.
 \end{equation}
Assuming this for a moment, let us finish the proof of the theorem. By the Besicovitch covering theorem, there are cubes $Q_{j}\in \LD_{close}(R)$ so that $\bigcup_{Q\in \LD_{close}(R)}2B_{Q}\subset \bigcup_j 2B_{Q_{j}}$ and so that no point is contained in at most $N=N(d)$ many $2B_{Q_{j}}$. Moreover, since $R\not\in \LD(R)$, we know that each $Q\in \LD(R)$ is such that $r(Q)\leq C_{0}A_{0}^{-1}r(R)$, and thus $2B_{Q}\subset 2B_{R}$ for $A_{0}$ large enough. Since $\Gamma_R= g(\R^{n})$ where $g$ is $L$-bi-Lipschitz and $L$ depends only on $n$ and $d$, we know $\Gamma_R$ is Ahlfors regular. Using these facts and that $R\in \DD^{db}$, $\Theta_\mu(2B_{Q})\leq \tau \,\Theta_\mu(2B_{R})$ for $Q\in \LD(R)$, and $B(\xi_{Q},r(B_{Q})/2)\subset 2B_{Q}$, we obtain
\begin{align*}
\sum_{Q\in \LD_{close}(R)}\mu(Q)
& \leq 
\sum_j \mu(2B_{Q_{j}})
=\sum_j  \Theta_\mu(2B_{Q_{j}})r(2B_{Q_{j}})^{n}\\
&  \lesssim \tau\Theta_\mu(2B_{R})\sum_j \cH^{n}(\Gamma_R\cap B(\xi_{Q_{j}},r(B_{Q_{j}})/2))
\leq \tau\Theta_\mu(2B_{R})\sum_j\cH^{n}(\Gamma_R\cap 2B_{Q_{j}})\\
& \lesssim \tau\Theta_\mu(2B_{R})\cH^{n}\Bigl(\Gamma_R\cap \bigcup_j 2B_{Q_{j}}\Bigr)
\leq \tau\Theta_\mu(2B_{R})\cH^{n}(\Gamma_R\cap 2B_{R})\\
& \lesssim \tau\Theta_\mu(2B_{R})r(2B_{R})^{n}
\sim\tau \mu(2B_{R})\lesssim \tau \mu(R)
\end{align*}
and so for $\tau$ small enough we have \eqref{ldnotfar}. 

We now focus on showing \eqref{closetogamma}. The main idea is that we know if $Q\in \LD_{close}(R)$, there is $x$ close to $L^{\widehat{Q}}$ (from the definition of $\LD_{close}(R)$). We would like to use \eqref{closetog} to conclude that $x$ is close to $\Gamma_R$ and hence we can find an appropriate $\xi_{Q}$, but we can only use that inequality if $L^{\widehat{Q}}$ happens to be one of the $L_{jk}$ we used to apply Theorem \ref{DT}. However, we can still find a cube $Q_{jk}$ of size and distance from $\widehat{Q}$ comparable to $\ell(Q)$, and by our work above we know that the distance between the planes $L_{jk}$ and $L^{\widehat{Q}}$ is small. Thus, $x$ is close to a point $y\in L^{\widehat{Q}}$, which is close to a point $z\in L_{jk}$ which, by \eqref{closetog}, is close to a point $\xi_{Q}\in \Gamma_R$ Now we will provide the details.

For a given $Q\in \LD_{close}(R)$ there exists $x\in Q$ such that $\dist(x,L^{\widehat{Q}})\leq\alpha^{1/2}\,\ell(\widehat{Q})\lesssim_{A,\tau} \alpha^{1/2}\ell(Q)$. Let $y\in L^{\widehat{Q}}$ be the projection of $x$ onto $L^{\widehat{Q}}$, so $y\in \frac{5}{4}B_{Q}$ for $\alpha>0$ small enough. Pick $k$ so that $\widehat{Q}\in \DD_{s(k)}$, thus $r(B_{\widehat{Q}})\leq r_{k}$. Then there is $Q_{jk}$ with $y_{\widehat{Q}}\in B_{jk}$, and so 
\[\dist(\widehat{Q},Q_{jk})\leq |y_{\widehat{Q}}-x_{jk}|\leq r_{k}\sim \ell(Q_{jk})\sim_{A,\tau} \ell(\widehat{Q}).\]
Thus, we can use Lemma \ref{distlqlemma} and the fact that $y\in L^{\widehat{Q}}\cap \frac{5}{4}B_{Q}$ to conclude
\[
\dist(y,L_{jk})\lesssim \alpha(\dist(y,\widehat{Q})+\ell(\widehat{Q})+\dist(y,Q_{jk})+\ell(Q_{jk}))
\lesssim_{A,\tau} \alpha\, \ell(Q).
\]
Let $z$ be the projection of $y$ onto $L_{jk}$, so by the above inequality, $|z-y|\lesssim_{A,\tau} \alpha\ell(Q)$. Thus, this inequality, our definition of $y$, and the fact that $x\in Q$ imply
\[|z-x_{jk}|
\leq |z-y|+|y-x|+|x-y_{\widehat{Q}}|+|y_{\widehat{Q}}-x_{jk}|
\lesssim_{A,\tau} \alpha\ell(Q)+\alpha^{1/2}\ell(Q)+r(B_{\widehat{Q}})+r_{k}
<3r_{k}\]
if $\alpha$ is small enough. Thus, by \eqref{closetog} in Theorem \ref{DT}, $\dist(z,\Gamma_R)\lesssim \ve r_{k}\sim \ve \ell(Q)$, and thus 
\[\dist(x,\Gamma_R)\leq |x-y|+|y-z|+\dist(z,\Gamma_R)\lesssim \alpha^{1/2}\ell(Q)+\alpha\ell(Q) +\ve\ell(Q)<\frac{3}{2}r(B_{Q})\]
if $\alpha$ and $\ve$ are chosen small enough. Thus, we can find $\xi_{Q}\in \Gamma_R\cap B(x,\frac{3}{2}r(B_{Q}))$, which proves \eqref{closetogamma}.
\vv


\section{The $\beta_2$'s and Menger curvature, and further remarks} \label{secrem}
\vv

By arguments analogous to the ones used to prove Theorem \ref{teo1} one also gets the following:
 
\begin{theorem}\label{teo1*}
Let $p\geq 0$ and let $\mu$ be a finite Borel measure in $\R^d$ 
such that $0<\Theta^{n,*}(x,\mu)<\infty$ for $\mu$-a.e.\ $x\in\R^d$. If
\begin{equation}\label{eqjones**}
\int_0^1 \beta_{\mu,2}(x,r)^2\,\Theta_\mu(x,r)^p\,\frac{dr}r<\infty \quad\mbox{ for $\mu$-a.e.\ $x\in\R^d$,}
\end{equation}
then $\mu$ is $n$-rectifiable.
\end{theorem}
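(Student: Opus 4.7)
The plan is to adapt the proof of Theorem \ref{teo1} by weighting the ``big Jones' function'' stopping condition of the Main Lemma with the density factor $\Theta_\mu(x,r)^p$. For $Q'\in\DD(R)$ I would redefine
\[
\beta^{*}(Q')^{2}\,\Theta_\mu(2B_{R})^{1+p}:=\int_{\delta^{-1}B_{Q'}\cap F}\int_{\delta\ell(Q')}^{\delta^{-1}\ell(Q')}\frac{\beta_{\mu,2}(x,r)^{2}\,\Theta_\mu(x,r)^{p}}{\mu(Q'\cap F)}\,\frac{dr}{r}\,d\mu(x),
\]
replace the stopping family $\mathsf{J}(R)$ by $\mathsf{J}^{*}(R)=\{Q\subset R:\sum_{Q\subset Q'\subset R}\beta^{*}(Q')^{2}\geq \alpha^{2}\}$, and keep $\HD(R),\LD(R),\UB(R),\LF(R)$ unchanged. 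The conclusion (c) of the Main Lemma then becomes the corresponding weighted estimate, with $\beta_{\mu,2}^{2}$ replaced by $\beta_{\mu,2}^{2}\Theta_\mu^{p}$ in the integral and $\Theta_\mu(2B_{R})$ in the denominator replaced by $\Theta_\mu(2B_{R})^{1+p}$.

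First I would check that the proof of the Main Lemma in Section \ref{secfi} carries over verbatim. The only place where the precise form of $\beta(Q)$ enters is the Chebyshev-type extraction of $x_{Q}\in Q\cap F$ and $\rho(Q)\in[(C-1)r(Q),Cr(Q)]$ with $\beta_{\mu,2}(x_{Q},\rho(Q))^{2}\lesssim \beta(Q)^{2}\Theta_\mu(2B_{R})$. Repeating the argument with the weighted density $\beta_{\mu,2}(\cdot,\cdot)^{2}\Theta_\mu(\cdot,\cdot)^{p}$ produces $x_{Q},\rho(Q)$ with
\[
\beta_{\mu,2}(x_{Q},\rho(Q))^{2}\,\Theta_\mu(x_{Q},\rho(Q))^{p}\,\lesssim\,\beta^{*}(Q)^{2}\,\Theta_\mu(2B_{R})^{1+p}.
\]
The crucial point is that for $Q\in\TT\cap\DD^{db}$, the inclusions \eqref{bqbxqrq} and the doubling property of $Q$ combined with Lemma \ref{lemdobbb1} give $\Theta_\mu(x_{Q},\rho(Q))\sim_{A,\tau}\Theta_\mu(2B_{Q})\sim_{A,\tau}\Theta_\mu(2B_{R})$, so dividing out the factor $\Theta_\mu(x_{Q},\rho(Q))^{p}$ recovers exactly the bound $\beta_{\mu,2}(x_{Q},\rho(Q))^{2}\lesssim_{A,\tau,p}\beta^{*}(Q)^{2}\Theta_\mu(2B_{R})$ of the unweighted case. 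Everything that follows in Section \ref{secfi}, namely the construction and control of the planes $L^{Q}$, Lemma \ref{distlqlemma}, the verification of \eqref{ek}, the bi-Lipschitz parametrization via Theorem \ref{DT}, and the bound on the low-density cubes \eqref{ldnotfar}, then goes through unchanged.

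Next I would adapt the corona-packing argument of Section \ref{secprovam}. The $ID_{H},ID_{U}$ dichotomy and Lemma \ref{lemID} are rerun after raising every density inequality to the power $1+p$: the estimate $\Theta_\mu(2B_{Q})\geq A\,\Theta_\mu(2B_{R})$ on $\HD(R)$ becomes $\Theta_\mu(2B_{Q})^{1+p}\geq A^{1+p}\,\Theta_\mu(2B_{R})^{1+p}$ and similarly for $\UB(R)$, so the absorption in Lemma \ref{lemID} still succeeds for $A$ large and $\tau$ small depending on $p$. Tracing through Lemma \ref{lemkey62} then yields
\[
\sum_{R\in\ttt}\Theta_\mu(2B_{R})^{1+p}\mu(R)\,\lesssim\, C_{*}^{1+p}\,\mu(R_{0}) + c(A,\tau,\eta,\delta,p)\int_{F}\int_{0}^{1}\beta_{\mu,2}(x,r)^{2}\,\Theta_\mu(x,r)^{p}\,\frac{dr}{r}\,d\mu(x).
\]
The deduction in Section \ref{subhh*} is then unchanged: for $\mu$-a.e.\ $x\in R_{0}$ the series $\sum_{R\in\ttt,\,x\in R}\Theta_\mu(2B_{R})^{1+p}$ converges, which because $p\geq 0$ still implies $\Theta_\mu(2B_{R_{i}})\to 0$ along any infinite descending chain and hence $\Theta^{n,*}(x,\mu)=0$; combined with the hypothesis $\Theta^{n,*}(x,\mu)>0$ $\mu$-a.e., this forces $\mu$-a.e.\ point of $R_{0}$ to lie in one of the Lipschitz images $\Gamma_{R_{n}}$ produced by the Main Lemma.

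The main obstacle will be the bookkeeping of the density weight in the extraction step that produces the planes $L^{Q}$, since the weighted extraction naturally yields a bound on the product $\beta_{\mu,2}^{2}\Theta_\mu^{p}$ rather than on $\beta_{\mu,2}^{2}$ alone. Inverting this requires the pointwise lower bound $\Theta_\mu(x_{Q},\rho(Q))\gtrsim_{A,\tau}\Theta_\mu(2B_{R})$ for $Q\in\TT\cap\DD^{db}$, which is itself a consequence of \eqref{bqbxqrq} together with $Q\in\DD^{db}$ and the two-sided density bound for tree cubes in Lemma \ref{lemdobbb1}; outside the tree one cannot expect any such comparison, which is precisely why the weight $\Theta_\mu(x,r)^{p}$ has to be propagated through every stage of the argument rather than absorbed into a constant.
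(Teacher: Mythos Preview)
Your proposal is correct and follows essentially the same route as the paper's own (sketched) proof: modify the Main Lemma by weighting the $\mathsf{J}(R)$ stopping condition with $\Theta_\mu(x,r)^p$, obtain the weighted version of (c) with $\Theta_\mu(2B_R)^{p+1}$ in the denominator, derive the packing estimate $\sum_{R\in\ttt}\Theta_\mu(2B_R)^{p+1}\mu(R)<\infty$ (the paper's Lemma \ref{lemkey62*}), and conclude exactly as in Subsection \ref{subhh*}. Your more detailed justification of why the extraction of $x_Q,\rho(Q)$ still yields a usable bound on $\beta_{\mu,2}(x_Q,\rho(Q))$---via the comparison $\Theta_\mu(x_Q,\rho(Q))\sim_{A,\tau}\Theta_\mu(2B_R)$ for doubling tree cubes---is a point the paper leaves implicit when it says ``it is easy to check'' that (c) can be replaced by its weighted analogue.
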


Clearly, since $\Theta^{n,*}(x,\mu)<\infty$, the larger is $p$, the weaker is the assumption \rf{eqjones**}.

We will now sketch the required changes to obtain this result.
First, it is easy to check that in Main Lemma \ref{mainlemma} one can replace the inequality in (c) by
\begin{multline*}
\sum_{Q\in\sss(R)\setminus (\HD(R)\cup\UB(R)) }\!\!\!\!\mu(Q)\\ \leq \tau^{1/2}\,\mu(R) 
+ 
\frac{c(A,\tau)}{\Theta_\mu(2B_R)^{p+1}}\sum_{Q\in\tree(R)} \int_{F\cap\delta^{-1}B_Q}\int_{\delta\ell(Q)}^{\delta^{-1}\ell(Q)}
\beta_{\mu,2}(x,r)^2\,\Theta_\mu(x,r)^p\,\frac{dr}r\,d\mu(x).
\end{multline*}
Using this estimate and arguments analogous to the ones in Lemma \ref{lemkey62}, one deduces the following:

\begin{lemma}\label{lemkey62*}
If $\tau$ is chosen small enough in the Main Lemma, then
\begin{equation}\label{eqsum441*}
\sum_{R\in\ttt}\Theta_\mu(2B_R)^{p+1}\,\mu(R)\leq C_*^{p+1}\,\mu(R_0)+
c(A,\tau,\eta,\delta)\,\int_F\int_0^1\beta_{\mu,2}(x,r)^2\,\Theta_\mu(x,r)^p\frac{dr}r\,d\mu(x),
\end{equation}
where $C_*$ is the constant in \rf{eqc*}.
\end{lemma}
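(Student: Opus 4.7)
The plan is to adapt the proof of Lemma \ref{lemkey62} essentially line-by-line, with the weights $\Theta_\mu(2B_R)$ replaced by $\Theta_\mu(2B_R)^{p+1}$ throughout. Two ingredients must be upgraded: the iteration inequality of Lemma \ref{lemID} and the disjointness/packing split between $\ttt\setminus ID$ and $ID$. The modified version of part (c) of the Main Lemma (stated just before the lemma and accepted here as given) already comes with the factor $\Theta_\mu(2B_R)^{p+1}$ in the denominator, which is exactly what is needed to absorb the new weights when we multiply through.

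First I would prove the $(p+1)$-analog of Lemma \ref{lemID}: for every $R\in ID$,
$$\Theta_\mu(2B_R)^{p+1}\mu(R)\leq \tfrac12\sum_{Q\in\nex(R)}\Theta_\mu(2B_Q)^{p+1}\mu(Q).$$
For $R\in ID_H$ this is immediate: raising the bound $\Theta_\mu(2B_Q)\gtrsim A\,\Theta_\mu(2B_R)$ (valid on $\wt{\HD}(R)$) to the $(p+1)$-th power gives a gain of $A^{-(p+1)}$, which beats the absolute constants once $A=A(p)$ is large. For $R\in ID_U$, I start from
$$\Theta_\mu(2B_R)^{p+1}\mu(R)\leq 4\sum_{S\in\UB_0(R)}\Theta_\mu(2B_R)^{p+1}\mu(S)\leq 4\tau^{-(p+1)}\sum_{S\in\UB_0(R)}\Theta_\mu(2B_S)^{p+1}\mu(S),$$
and then bound each $\Theta_\mu(2B_S)^{p+1}\mu(S)$ by first using the pointwise estimate $\Theta_\mu(2B_P)\gtrsim \tau^{-2}\Theta_\mu(2B_S)$ for $P\in I_S$ together with \rf{eqsgk32**} to obtain
$$\sum_{P\in I_S}\Theta_\mu(2B_P)^{p+1}\mu(P)\gtrsim \tau^{-2(p+1)}\Theta_\mu(2B_S)^{p+1}\mu(S).$$
The two factors combine to give $\tau^{-(p+1)}\cdot \tau^{2(p+1)}=\tau^{p+1}$ in front of $\sum_{P\in \nex(R)}\Theta_\mu(2B_P)^{p+1}\mu(P)$, which is $\leq 1/2$ for $\tau=\tau(p)$ small.

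Next I rerun the telescoping argument of Lemma \ref{lemkey62}. Applied to $\ttt_0^k$, the above inequality together with the trivial bound $\Theta_\mu(2B_R)\lesssim C_*$ for $R\in\ttt_{k+1}$ gives
$$\sum_{R\in\ttt_0^k}\Theta_\mu(2B_R)^{p+1}\mu(R)\leq 2\sum_{R\in\ttt_0^k\setminus ID_0^k}\Theta_\mu(2B_R)^{p+1}\mu(R)+c\,C_*^{p+1}\mu(R_0),$$
and letting $k\to\infty$ yields the analog of \rf{eqaaii}. Splitting the residual sum into $\ttt\setminus(ID\cup\BZ)$ and $\ttt\cap\BZ$, the second piece is controlled by $C_*^{p+1}\mu(R_0)$ because the cubes in $\ttt\cap\BZ$ are pairwise disjoint (their $\nex$ is empty). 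For the first piece I apply the modified (c): multiplying through by $\Theta_\mu(2B_R)^{p+1}$ cancels the denominator, and after summing over $R$ the disjointness of the sets $R\setminus\bigcup_{Q\in\nex(R)}Q$ bounds the first resulting term by $c\,C_*^{p+1}\mu(R_0)$, while the finite superposition of the domains $\{F\cap\delta^{-1}B_Q:\,\delta\ell(Q)\leq r\leq\delta^{-1}\ell(Q)\}$ for $Q\in\tree(R)$, $R\in\ttt$, bounds the second by $c(A,\tau,\delta)\int_F\int_0^1\beta_{\mu,2}(x,r)^2\,\Theta_\mu(x,r)^p\,\tfrac{dr}{r}\,d\mu(x)$.

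The only real obstacle is the $ID_U$ case of the adapted Lemma \ref{lemID}: the power-$(p+1)$ lower bound on $\sum_{P\in I_S}\Theta_\mu(2B_P)^{p+1}\mu(P)$ is not a direct restatement of \rf{eqsgk32**} and must be derived by combining that inequality with the pointwise density bound $\Theta_\mu(2B_P)\gtrsim \tau^{-2}\Theta_\mu(2B_S)$ supplied by Lemma \ref{lembalan2}. Once this is verified, all remaining steps are a straightforward re-weighting of the original argument, and the choices of $A$ and $\tau$ acquire a (harmless) dependence on $p$.
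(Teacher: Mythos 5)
Your proposal is correct and coincides with the argument the paper has in mind (the paper itself only says Lemma \ref{lemkey62*} follows by ``arguments analogous to the ones in Lemma \ref{lemkey62}''); in particular, you have correctly isolated the one step that is genuinely new when $p>0$, namely the $ID_U$ case of the upgraded Lemma \ref{lemID}, and your derivation of $\sum_{P\in I_S}\Theta_\mu(2B_P)^{p+1}\mu(P)\gtrsim\tau^{-2(p+1)}\Theta_\mu(2B_S)^{p+1}\mu(S)$ by combining the pointwise bound $\Theta_\mu(2B_P)\gtrsim\tau^{-2}\Theta_\mu(2B_S)$ from Lemma \ref{lembalan2} with \rf{eqsgk32**} is exactly the right bookkeeping. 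The remaining steps (telescoping over $\ttt_0^k$, splitting off $\ttt\cap\BZ$, and using the modified form of part (c) of the Main Lemma together with disjointness and finite superposition) transfer verbatim with the weight $\Theta_\mu(2B_R)$ replaced by $\Theta_\mu(2B_R)^{p+1}$, so nothing further is needed.
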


With this result at hand,  using that 
 for $\mu$-a.e.\ $x\in R_0$,
\begin{equation*}
\sum_{R\in\ttt:x\in R} \Theta_\mu(2B_R)^{p+1}<\infty
\end{equation*}
instead of \rf{eqsug9}, the same arguments as in Subsection \ref{subhh*} show that $\mu$ is $n$-rectifiable.
\vv

The case $p=1$ of Theorem \ref{teo1*} is particularly interesting because of the relationship with the Menger curvature of measures and singular integrals
due to the estimate \rf{eqsum441*} and the results in Sections 17 and 19 in \cite{Tolsa-delta}. \\

Our goal now is to prove Theorem \ref{teocurv-intro}, which we state below again for the reader's convenience. 

\begin{theorem*}  
Let $\mu$ be a finite Radon measure in $\R^2$ such that $\mu(B(x,r))\leq r$ for all $x\in\R^2$. Then
$$c^2(\mu) + \|\mu\|\sim \iint_0^\infty \beta_{\mu,2}(x,r)^2\,\Theta_\mu(x,r)\,\frac{dr}r\,d\mu(x) + \|\mu\|,$$
where the implicit constant is an absolute constant.
\end{theorem*}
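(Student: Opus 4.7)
My plan is to treat the two directions of the equivalence separately. In the $1$-AD-regular case the sharper identity $c^2(\mu)\sim \iint \beta_{\mu,2}^2\,dr/r\,d\mu$ is already known from \cite{MMV} and Jones' argument (see \cite{Tolsa-llibre}), and the role of the extra factor $\Theta_\mu^1(x,r)$ here is precisely to compensate for the lack of AD-regularity of $\mu$. Both sides trivially dominate $\|\mu\|$ up to absolute constants, so the real content is a two-sided comparison between $c^2(\mu)$ and $\iint \beta_{\mu,2}^2\,\Theta_\mu^1\,dr/r\,d\mu$.

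For the inequality $\iint \beta_{\mu,2}^2\,\Theta_\mu^1\,dr/r\,d\mu \lesssim c^2(\mu)+\|\mu\|$, I would invoke the corona decomposition with respect to Menger curvature built in Sections~17 and~19 of \cite{Tolsa-delta}. That decomposition splits $\DD$ into trees $\tree(R)$ in which $\Theta_\mu(2B_Q)\sim \Theta_\mu(2B_R)$ for every $Q\in\tree(R)$, the measure lies close to a Lipschitz graph, and one has the packing $\sum_R \Theta_\mu(2B_R)^2 \mu(R)\lesssim c^2(\mu)+\|\mu\|$. Rescaling inside a single tree so that $\mu$ becomes comparable to a $1$-AD-regular measure at scale $\ell(R)$, the MMV estimate yields $\sum_{Q\in\tree(R)}\beta_{\mu,2}(2B_Q)^2\mu(Q)\lesssim \Theta_\mu(2B_R)^{-1}\bigl(c^2(\mu|_{10B_R})+\Theta_\mu(2B_R)\mu(R)\bigr)$. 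Multiplying by $\Theta_\mu(2B_R)$, summing over $\ttt$, using the packing, and finally passing from the discrete sum $\sum_Q \beta_{\mu,2}(2B_Q)^2\Theta_\mu(2B_Q)\mu(Q)$ to the continuous $\iint \beta_{\mu,2}^2\Theta_\mu^1\,dr/r\,d\mu$ via a standard scale-averaging argument (using the small-boundary property in Lemma~\ref{lemcubs}) closes this direction.

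For the converse $c^2(\mu)\lesssim \iint \beta_{\mu,2}^2\,\Theta_\mu^1\,dr/r\,d\mu + \|\mu\|$, I would use the corona decomposition of the present paper through Theorem~\ref{teo1*} with $p=1$: trees $\tree(R)$ whose stopping descendants are captured by $\HD$, $\LD$ and $\UB$, whose non-stopped mass lies on a bi-Lipschitz image $\Gamma_R$ of $\R$ with universal constant, and satisfying the packing estimate~\rf{eqsum441*}. The triple integral defining $c^2(\mu)$ is split according to which tree contains the three points at a compatible scale. For intra-tree triples with $x,y,z\in R$, applying the AD-regular converse $c^2(\sigma)\lesssim \iint \beta_{\sigma,2}^2\,dr/r\,d\sigma$ on the rescaled measure on $\Gamma_R$ produces a factor of $\Theta_\mu(2B_R)$; summing over $\ttt$ with~\rf{eqsum441*} reproduces the claimed bound. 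For inter-tree triples, where the three points live in three distinct stopping regions, I would follow the bi-Lipschitz stability arguments of \cite{Tolsa-bilip}, telescoping along the common ancestor and replacing $\mu$ on each $\Gamma_R$ by its image under the parametrization $g_R$; this gives a term of the form $\sum_R \Theta_\mu(2B_R)^2\mu(R)$, which is again absorbed by~\rf{eqsum441*}.

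The hardest step will be this inter-tree curvature estimate: when the three points of a triple lie in three different $\Gamma_{R_i}$, a single parametrization is unavailable, and one must track how Menger curvature transports between neighbouring Lipschitz graphs of very different densities. I expect this can be pushed through by adapting the change-of-variables technology from \cite{Tolsa-bilip}, where the difference of curvatures between two nearby graphs is bounded by $\beta$-numbers at the common-ancestor scale. Once this is done, the bound $\sum_R\Theta_\mu(2B_R)^2 \mu(R)$ on the off-diagonal contribution closes the argument via~\rf{eqsum441*} with $p=1$. A second, more routine obstacle is the careful dyadic-to-continuous comparison: the trees produce sums over cubes weighted by $\beta_{\mu,2}(2B_Q)^2\Theta_\mu(2B_Q)$, and one needs uniformity of the comparison constants under the non-uniform densities appearing here.
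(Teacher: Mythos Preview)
Your plan for the direction $c^2(\mu)\lesssim \iint \beta_{\mu,2}^2\,\Theta_\mu\,\frac{dr}r\,d\mu + \|\mu\|$ is essentially what the paper does, but you are working harder than necessary. The paper simply invokes Lemma~17.6 of \cite{Tolsa-delta} as a black box to obtain $c^2(\mu)\lesssim \sum_{R\in\ttt}\Theta_\mu(2B_R)^2\mu(R)$ for the corona family $\ttt$ of the present paper, and then applies Lemma~\ref{lemkey62*} with $p=1$. The intra-tree/inter-tree triple analysis that you flag as the ``hardest step'' is precisely the content of that cited lemma, so there is no need to redevelop it.

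The real problem is in your argument for the converse direction $\iint \beta_{\mu,2}^2\,\Theta_\mu\,\frac{dr}r\,d\mu \lesssim c^2(\mu)+\|\mu\|$. You propose to rescale inside each tree and apply the MMV inequality, obtaining
\[
\sum_{Q\in\tree(R)}\beta_{\mu,2}(2B_Q)^2\,\mu(Q)\lesssim \Theta_\mu(2B_R)^{-1}\bigl(c^2(\mu|_{10B_R})+\Theta_\mu(2B_R)\,\mu(R)\bigr),
\]
and then to sum over $R$. But the top cubes $R$ in the corona family are \emph{nested}: every $R'\in\nex(R)$ satisfies $R'\subset R$, so $10B_{R'}\subset c\,B_R$ and $c^2(\mu|_{10B_{R'}})$ may be comparable to $c^2(\mu|_{10B_R})$. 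Hence $\sum_{R} c^2(\mu|_{10B_R})$ has no reason to be controlled by $c^2(\mu)$; each triple is being counted once for every tree above it. (A related issue: $\mu$ is not AD-regular below the stopping scales of $\tree(R)$, so the phrase ``rescaling so that $\mu$ becomes comparable to a $1$-AD-regular measure'' is not accurate, and MMV does not apply directly to $\mu|_{10B_R}$.)

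The paper circumvents this entirely: it uses the corona decomposition of \cite{Tolsa-bilip} (Lemma~\ref{lemcorona3} here), whose packing condition $\sum_{R\in\ttt_*}\Theta_\mu(7R)^2\mu(7R)\lesssim c^2(\mu)+\|\mu\|$ is the \emph{only} place curvature enters. Inside each tree the paper proves the purely geometric estimate
\[
\sum_{Q\in\treg(R)}\beta_{\mu,2}(3Q)^2\,\mu(Q)\lesssim \Theta_\mu(7R)\,\mu(7R),
\]
with no curvature on the right side. The mechanism is not MMV but rather Lemma~\ref{repart}: one replaces $\mu$ on each regularized stopping cube $P_i$ by $g_i\,\HH^1|_{\Gamma_R}$, compares $\beta_{\mu,2}(3Q)$ with $\beta_{\HH^1|_{\Gamma_R},2}(c_3Q)$ plus controllable error terms, and then uses only that $\Gamma_R$ is an AD-regular curve (so $\sum_Q \beta_{\HH^1|_{\Gamma_R},2}(c_3Q)^2\ell(Q)\lesssim \ell(R)$). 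Multiplying by $\Theta_\mu(7R)$ and summing over $R$ then invokes the packing exactly once. Your MMV-inside-trees strategy would need to be replaced by this transference-to-$\Gamma_R$ argument, or else you would need a scale-localized version of MMV that only counts triples at the scales of $\tree(R)$, which is substantially more delicate than what you wrote.
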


Consider the family $\ttt$ defined in Section \ref{secprovam}, with $R_0=F=\supp\mu$ and $\mathcal B=\varnothing$. 
Arguing as in Lemma 17.6 of \cite{Tolsa-delta}, one deduces that if $\mu(B(x,r))\leq r$ for all $x\in\R^2$, then 
$$c^2(\mu) \lesssim \sum_{R\in \ttt}\Theta_\mu(2B_R)^2\,\mu(R).$$
Combining this estimate with Lemma \ref{lemkey62*} (with $n=p=1$), we obtain
$$c^2(\mu)\lesssim \|\mu\| + \iint_0^\infty \beta_{\mu,2}(x,r)^2\,\Theta_\mu(x,r)\,\frac{dr}r\,d\mu(x).$$

To complete the proof of Theorem \ref{teocurv-intro} it remains to prove the converse by showing that
\begin{equation}\label{eqrem77}
\iint_0^\infty \beta_{\mu,2}(x,r)^2\,\Theta_\mu(x,r)\,\frac{dr}r\,d\mu(x)\lesssim c^2(\mu) + \|\mu\|.
\end{equation}
To this end, 
 we will use the corona decomposition of \cite{Tolsa-bilip}. To describe it we follow quite closely the approach in 
\cite[Section 19]{Tolsa-delta}.
To state the precise result we need, first we will introduce some terminology which is quite similar to the one 
of the corona construction in Section \ref{secprovam}. An important difference is that it involves the usual dyadic lattice $\DD(\R^2)$, instead
of the David-Mattila lattice $\DD$. 

Let $\mu$ be a finite Radon measure, and assume that there exists a dyadic square $R_0\in\DD(\R^2)$
such that $\supp\mu\subset R_0$ with $\ell(R_0)\leq 10\,\diam(\supp(\mu))$, say. 
Let $\ttt_{*}\subset
\DD(\R^2)$ be
a family of dyadic squares contained in $R_0$, with $R_0\in\ttt_{*}$. 

 Given $R\in\ttt_{*}$, we denote by
 $\eend_{*}(R)$ 
the subfamily of the squares
$P\in \ttt_*$ satisfying
\begin{itemize}
\item $P\subsetneq R$,
\item $P$ is maximal, in the sense that there does not exist
another square $P'\in \ttt_*$ such that $P\subset P'\subsetneq R$.
\end{itemize}
 Also, we denote by $\tr_*(R)$ the family of squares $\DD(\R^2)$ which intersect $\supp\mu$, are contained in
$R$, and are not contained in any square from $\eend_*(R)$. 
Notice that 
$$\{P\in\DD(\R^2):P\subset R_0,\,P\cap\supp\mu\neq\varnothing\} = \bigcup_{R\in \ttt_*} \tr_*(R).$$
The set of good points contained in $R$ equals
$$G_*(R):= R\cap\supp(\mu)\setminus \bigcup_{P\in\eend_*(R)}P.$$

Given a square $Q\subset\R^2$, we denote
$$\Theta_\mu(Q)= \frac{\mu(Q)}{\ell(Q)},$$
and given two squares $Q\subset R$, we set
$$\delta_{\mu}(Q,R) := \int_{2R\setminus Q} \frac1{|y-z_Q|}\,d\mu(y),$$
where $z_Q$ stands for the center of $Q$. We also set
$$\beta_{\mu,2}(Q) = \inf_L \left(\frac1{\ell(Q)} \int_{Q} \left(\frac{\dist(y,L)}{\ell(Q)}\right)^2\,d\mu(y)\right)^{1/2},$$
where the infimum is taken over all the lines $L\subset\R^2$.
\vv

\begin{lemma}[The dyadic corona decomposition of \cite{Tolsa-bilip}] \label{lemcorona3}
Let $\mu$ be a Radon measure on $\R^2$ with linear growth and finite curvature $c^2(\mu)$. Suppose that
there exists a dyadic square $R_0\in\DD(\R^2)$
such that $\supp\mu\subset R_0$ with $\ell(R_0)\leq 10\,\diam(\supp(\mu))$. 
Then there exists a family $\ttt_*$ as above which satisfies the following. 
For each square $R\in \ttt_*$ there exists an
 AD-regular curve $\Gamma_R$ (with the AD-regularity constant uniformly bounded by some absolute constant) such that:
\begin{itemize}
\item[(a)] $\mu$ almost all $G_*(R)$ is contained in $\Gamma_R$.

\item[(b)] For each $P\in \eend_*(R)$ there exists some square
$\wt{P}\in\DD(\R^2)$ containing $P$, concentric with $P$, such that $\delta_{\mu}(P,\wt{P})\leq
C\Theta_\mu(7R)$ and $\frac12{\wt{P}}\cap \Gamma_R\neq \varnothing$.

\item[(c)] If $P\in\tr_*(Q)$, then $\Theta_\mu(7P)\leq
C\,\Theta_\mu(7R).$
\end{itemize}
Further, the following packing condition holds:
 \begin{equation} \label{pack3}
\sum_{R\in \ttt_*} \Theta_\mu(7R)^2 \mu(7R) \leq C\,\|\mu\| + C\,c^2(\mu).
\end{equation}
\end{lemma}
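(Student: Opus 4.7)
The plan is to build $\ttt_*$ by a top-down stopping-time construction, construct each AD-regular curve $\Gamma_R$ by a Peter Jones-style patching of best-approximating lines, and obtain the packing estimate \eqref{pack3} from the Melnikov curvature identity. First I would set $R_0$ as the root of the initial tree and, recursively, given a root $R \in \ttt_*$, define $\eend_*(R)$ as the family of maximal dyadic descendants $P \subsetneq R$ for which at least one of the following stopping rules is triggered: \emph{(i)\;high density,} $\Theta_\mu(7P) \geq C_1 \,\Theta_\mu(7R)$; \emph{(ii)\;big $\delta$,} there exists a square $\wt P \subset R$ concentric with $P$ such that $\delta_\mu(P,\wt P) \geq C_2 \, \Theta_\mu(7R)$. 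Each $P \in \eend_*(R)$ is then made the root of a new tree and the procedure iterates.

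Next I would construct $\Gamma_R$ for each fixed $R$. By construction, every $P \in \tr_*(R)$ has density $\Theta_\mu(7P) \lesssim \Theta_\mu(7R)$ (this already gives property (c)), and $\delta_\mu(P,R) \lesssim \Theta_\mu(7R)$. These two facts force the best-approximating lines $L_P$ to a slow rotation across scales, in a way reminiscent of the $\beta$-control that drove Theorem \ref{DT}; one can then stitch together arcs of the $L_P$'s (with small connectors across the stopping squares) to obtain a connected curve $\Gamma_R$ whose length is $\lesssim \Theta_\mu(7R)\,\ell(R)$, so that $\Gamma_R$ is AD-regular with absolute constant. Property (a) then follows because good points are nested intersections of shrinking tree squares whose centers stay within $o(\ell(P))$ of $\Gamma_R$, and property (b) follows because the definition of $\eend_*(R)$ exhibits for each stopping square $P$ a concentric ancestor $\wt P$ with $\delta_\mu(P,\wt P) \lesssim \Theta_\mu(7R)$, forcing $\tfrac12 \wt P$ to hit $\Gamma_R$.

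Finally, for the packing estimate I would split $\ttt_*$ according to the stopping rule that produced each root. For type (i) roots, the density doubles at least once per generation, so a telescoping argument using the linear growth $\mu(B(x,r)) \leq r$ bounds $\sum_R \Theta_\mu(7R)^2\,\mu(7R) \lesssim \|\mu\|$. For type (ii) roots, the Melnikov identity gives $\delta_\mu(P,\wt P)^2 \,\mu(P) \lesssim c^2(\mu\mathord{\restriction}_{2\wt P \setminus P}) + (\text{linear-growth error})$, and since the annuli $2\wt P \setminus P$ corresponding to a well-chosen subfamily of stopping squares have bounded overlap, summing yields $\lesssim c^2(\mu) + \|\mu\|$. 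The main obstacle is the packing bound, specifically the type (ii) contribution: one must carefully arrange $\wt P$ so that the associated annuli are essentially disjoint and apply the Melnikov three-point identity so that each stopping square extracts a genuine piece of curvature without double-counting. This combinatorial $\delta$-to-curvature reduction is the technical heart of \cite{Tolsa-bilip}.
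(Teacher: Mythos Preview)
The paper does not supply a self-contained proof of this lemma; it simply records that the statement is a dyadic reformulation of Main Lemma~3.1 of \cite{Tolsa-bilip} (obtained by breaking $4$-dyadic squares into dyadic ones), and that $\ttt_*$ coincides with the family $\ttt_{\rm dy}$ of \cite[Section~8.2]{Tolsa-bilip}. So your sketch is not being compared to an argument written here, but to the construction in \cite{Tolsa-bilip}.

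With that said, your sketch has two genuine gaps. First, your stopping rule (ii) confuses a \emph{consequence} of the construction with a \emph{cause}. Property~(b) asserts that for every stopping square $P$ one can \emph{find} a concentric $\wt P$ with $\delta_\mu(P,\wt P)\le C\Theta_\mu(7R)$ and $\tfrac12\wt P\cap\Gamma_R\neq\varnothing$; this is deduced after $\Gamma_R$ has been built, from the fact that $\Gamma_R$ passes close to the mass of $R$. It is not a stopping criterion. As you have written rule~(ii), it triggers essentially always (take $\wt P$ large), and in any case it gives no flatness information from which to build $\Gamma_R$. In \cite{Tolsa-bilip} the actual stopping rules are high density, low density, and a big-$\beta$ condition; the slow rotation of the approximating lines $L_P$ that you invoke to ``stitch'' $\Gamma_R$ is a consequence of the $\beta$-stopping, not of any $\delta$ condition.

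Second, your packing argument for the ``type~(ii)'' roots does not work as stated. The Melnikov identity relates $c^2(\mu)$ to the symmetrization of the Cauchy kernel, not to $\delta_\mu(P,\wt P)^2\,\mu(P)$; there is no inequality of the form $\delta_\mu(P,\wt P)^2\,\mu(P)\lesssim c^2(\mu|_{2\wt P\setminus P})$ available by a direct application. In \cite{Tolsa-bilip} the packing condition \eqref{pack3} is obtained by bounding the contribution of the $\beta$-stopping squares via $\sum_Q\beta_{\mu,2}(Q)^2\Theta_\mu(Q)\,\mu(Q)\lesssim c^2(\mu)+\|\mu\|$ (which in turn uses the $L^2(\mu)$-boundedness of the Cauchy transform and the curvature formula), while the density-stopping contributions telescope. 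The reduction you describe---choosing $\wt P$ so that the annuli are disjoint and extracting a ``piece of curvature'' per square---is not how the argument runs, and as written it is not a proof.
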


\vv

Let us remark that the squares from the family $\ttt_*$ may be non-doubling.

The preceding lemma is not stated explicitly in \cite{Tolsa-bilip}. However it follows 
immediately from the Main Lemma 3.1 of \cite{Tolsa-bilip}, just by splitting the so called $4$-dyadic squares in 
 \cite[Lemma 3.1]{Tolsa-bilip} into dyadic squares. Further, the family $\ttt_*$ above is the same as the family
 $\ttt_{\rm dy}$ from \cite[Section 8.2]{Tolsa-bilip}.

We need a couple of auxiliary results from \cite{Tolsa-bilip}. The first one,  introduces a regularized version of the family $\eend_*(R)$ for $R\in\ttt_*$ and is proved in Lemmas 8.2 and 8.3 of \cite{Tolsa-bilip}.
\vv

\begin{lemma}\label{lemreg*}
Let $\ttt_*$ be as in Lemma \ref{lemcorona3}.
For each $R\in\ttt_*$ there exists a family of dyadic squares $\reg_*(R)$ which satisfies the following
properties:
\begin{itemize}
\item[(a)] The squares from $\reg_*(R)$ are contained in $R$ and are pairwise disjoint.

\item[(b)] Every square from $\reg_*(R)$ is contained in some square from $\eend_*(R)$.

\item[(c)] $\bigcup_{Q\in\reg_*(R)}2Q \subset \R^2\setminus G_*(R)$ and
$\supp\mu\cap R\setminus\bigcup_{Q\in\reg_*(R)}Q\subset G_*(R)\subset \Gamma_R$.

\item[(d)] If $P,Q\in \reg_*(R)$ and $2P\cap 2Q\neq \varnothing$, then $\ell(Q)/2 \leq \ell(P) \leq 2 \ell(Q)$.
 
\item[(e)] If $Q\in\reg_*(R)$ and $x\in Q$, $r\geq \ell(Q)$, then $\mu(B(x,r)\cap
4R) \leq C\Theta_\mu(7R)\,r.$

\item[(f)] For each $Q\in \reg_*(R)$, there exists some square $\wt{Q}$, concentric with $Q$,
which contains $Q$, such that $\delta_{\mu}(Q,\wt{Q})\leq
C\Theta_{\mu}(7R)$ and $\frac12\wt{Q}\cap\Gamma_R\neq
\varnothing$.
\end{itemize}
\end{lemma}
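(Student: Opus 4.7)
The plan is to construct $\reg_*(R)$ via a Whitney-type selection relative to the closed set $G_*(R) \cup (\R^2 \setminus R)$. For each dyadic square $Q \subset R$ meeting $\supp\mu$, impose the selection condition
\[
20\,Q \cap G_*(R) = \varnothing \quad \text{and} \quad \ell(Q) \leq \tfrac{1}{C_1}\dist(Q, \R^2 \setminus R),
\]
for a large absolute constant $C_1$. Let $\reg_*(R)$ consist of the maximal dyadic squares fulfilling this condition. The Whitney selection immediately yields (a), since pairwise disjointness follows from the maximality of each selected square together with dyadic nesting, and property (d), since if $2P \cap 2Q \neq \varnothing$ with $P, Q \in \reg_*(R)$, the centers of $P$ and $Q$ have comparable distance to $G_*(R)$, forcing $\ell(P) \sim \ell(Q)$.

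For (b), let $Q \in \reg_*(R)$ and let $\wh Q$ denote its dyadic parent. By maximality, $\wh Q$ fails the selection; for $\ell(R)$ sufficiently larger than $\ell(Q)$, the second alternative cannot be binding, so $20\wh Q \cap G_*(R) \neq \varnothing$, and hence $\ell(Q) \sim \dist(Q, G_*(R))$. Any point $x \in Q \cap \supp\mu$ lies in some $P \in \eend_*(R)$ by the defining property of $\eend_*(R)$ and the fact that $Q$ is disjoint from $G_*(R)$; since such a $P$ must satisfy $\ell(P) \gtrsim \dist(P, G_*(R)) \gtrsim \ell(Q)$, one gets $Q \subset P$ for $C_1$ large enough. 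Property (c) is then immediate: $20Q \cap G_*(R) = \varnothing$ gives $2Q \subset \R^2 \setminus G_*(R)$, while conversely any $x \in \supp\mu \cap R \setminus G_*(R)$ lies in a unique maximal Whitney square since $\dist(x, G_*(R)) > 0$.

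The main obstacle is (e), which requires the uniform estimate $\mu(B(x,r) \cap 4R) \leq C\Theta_\mu(7R)\, r$ for $x \in Q \in \reg_*(R)$ and $r \geq \ell(Q)$. To prove it, cover the ball by a bounded number of dyadic squares $P \subset 4R$ of side length comparable to $r$, each lying in $\tr_*(R)$; Lemma \ref{lemcorona3}(c) then gives $\Theta_\mu(7P) \lesssim \Theta_\mu(7R)$, and summing the masses $\mu(P) \leq \Theta_\mu(7P)\,\ell(P)$ yields the bound. Finally, for (f) choose $\wt Q$ as a concentric enlargement of $Q$ with $\ell(\wt Q) \sim \dist(Q, \Gamma_R)$; then $\tfrac12\wt Q \cap \Gamma_R \neq \varnothing$ by construction, while $\delta_\mu(Q, \wt Q) \lesssim \Theta_\mu(7R)$ is obtained by decomposing the annular integration over $2\wt Q \setminus Q$ into dyadic layers and bounding each layer via Lemma \ref{lemcorona3}(c) applied to an intermediate square of $\tr_*(R)$, with the extension to $\eend_*(R)$ handled through Lemma \ref{lemcorona3}(b).
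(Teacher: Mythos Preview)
The paper does not prove this lemma; it simply cites Lemmas 8.2 and 8.3 of \cite{Tolsa-bilip}, so there is no in-paper argument to compare against. That said, your sketch has a genuine gap in part (b).

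Your Whitney selection is taken relative to $G_*(R)\cup(\R^2\setminus R)$, but nothing in Lemma~\ref{lemcorona3} forces the size of a square $P\in\eend_*(R)$ to be comparable to its distance from $G_*(R)$. A square $P\in\eend_*(R)$ can be tiny while $\dist(P,G_*(R))$ is of order $\ell(R)$: for instance, if $\supp\mu\cap R$ has an isolated piece covered by a small $P\in\eend_*(R)$ with the rest of $\supp\mu\cap R$ (hence $G_*(R)$) lying far away, or if $G_*(R)=\varnothing$ altogether. In that situation the maximal Whitney square $Q$ through a point of $P\cap\supp\mu$ has $\ell(Q)\sim\dist(Q,G_*(R))\gg\ell(P)$, so $P\subsetneq Q$ and (b) fails. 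The inequality ``$\ell(P)\gtrsim\dist(P,G_*(R))$'' that you invoke for $P\in\eend_*(R)$ is precisely what is not available from the corona lemma.

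The construction in \cite{Tolsa-bilip} regularizes from \emph{inside} $\eend_*(R)$ rather than relative to $G_*(R)$: one uses the $1$-Lipschitz gauge
\[
d(x)=\inf_{P\in\eend_*(R)}\bigl(\ell(P)+\dist(x,P)\bigr)
\]
and lets $\reg_*(R)$ consist of the maximal dyadic squares $Q$ contained in some member of $\eend_*(R)$ with $\ell(Q)$ small compared to $\inf_{Q}d$. Property (b) is then built in, (d) follows from the Lipschitz nature of $d$, and your outlines for (e) and (f) go through once (b) is secured.
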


We denote by $\treg(R)$ the family of squares $\DD(\R^2)$ which intersect $\supp\mu$, are contained in
$R$, and are not contained in any square from $\reg_*(R)$. Clearly, we have $$\tr_*(R)\subset \treg(R).$$

The second auxiliary result shows how, in a sense, the measure $\mu$  can be approximated on each tree $\treg(R)$ by another measure supported on 
$\Gamma_R$ which is absolutely continuous with respect to length. This is proved in Lemma 
8.4 of \cite{Tolsa-bilip}.

\begin{lemma} \label{repart}
For $R\in\ttt_*$, denote
$\reg_*(R)=:\{P_i\}_{i\geq1}$. For each
$i$, let $\wt{P}_i\in\DD(\R^2)$ be a square containing $P_i$ such that
$\delta_{\mu}(P_i,\wt{P}_i)\leq C\Theta_\sigma(7R)$ and $\frac12 \wt{P}_i\cap \Gamma_{R}\neq
\varnothing$, with minimal side length  (as in (e) of Lemma \ref{lemreg*}).
For each $i\geq1$ there exists some function $g_i\geq0$ supported
on $\Gamma_R\cap \wt{P}_i$ such that
\begin{equation} \label{co1}
\int_{\Gamma_R} g_i\,d\HH^1 = \mu(P_i),
\end{equation}
\begin{equation} \label{co2}
\sum_i g_i \lesssim \Theta_\mu(7R),
\end{equation}
and
\begin{equation} \label{co3}
\|g_i\|_\infty \,\ell(\wt{P}_i) \lesssim\mu(P_i).
\end{equation}
\end{lemma}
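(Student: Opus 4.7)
The plan is to set, for each $i\geq 1$,
\[
g_i := \frac{\mu(P_i)}{\HH^1(\Gamma_R\cap \wt P_i)}\,\chi_{\Gamma_R\cap \wt P_i},
\]
so that \eqref{co1} holds automatically by the normalization. To verify \eqref{co3} I would invoke the AD-regularity of $\Gamma_R$ (with an absolute constant, as guaranteed by the bi-Lipschitz parametrization underlying the corona decomposition of \cite{Tolsa-bilip}): picking $z_i\in\Gamma_R\cap \tfrac12\wt P_i$, which exists by the defining property of $\wt P_i$, one has $B(z_i,\ell(\wt P_i)/4)\subset \wt P_i$, so AD-regularity yields $\HH^1(\Gamma_R\cap\wt P_i)\gtrsim \ell(\wt P_i)$. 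Hence $\|g_i\|_\infty\,\ell(\wt P_i)\lesssim \mu(P_i)$, which is \eqref{co3}.

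The hard part is \eqref{co2}. Fix $x\in\Gamma_R$; by the lower estimate on $\HH^1(\Gamma_R\cap\wt P_i)$,
\[
\sum_i g_i(x)\lesssim \sum_{i:\,x\in\wt P_i}\frac{\mu(P_i)}{\ell(\wt P_i)}.
\]
Each summand is individually bounded by $\Theta_\mu(7R)$: since $P_i\in\reg_*(R)$ and $\ell(\wt P_i)\geq \ell(P_i)$, property (e) of Lemma \ref{lemreg*} applied with any $y\in P_i$ at radius $r=C\ell(\wt P_i)$ gives $\mu(P_i)\lesssim \Theta_\mu(7R)\,\ell(\wt P_i)$. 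The core difficulty is to sum these estimates without an extra logarithmic loss across scales.

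To handle the sum, the plan is to exploit pairwise disjointness of $\{P_i\}$ together with the crude bound $|x-y|\leq C\ell(\wt P_i)$ for $y\in P_i\subset\wt P_i$ (which converts $1/\ell(\wt P_i)$ into $1/|x-y|$ up to constants) and pass to an integral
\[
\sum_{i:\,x\in\wt P_i}\frac{\mu(P_i)}{\ell(\wt P_i)}\lesssim \int_{\bigcup_{i:\,x\in\wt P_i}P_i}\frac{d\mu(y)}{|x-y|}.
\]
Letting $\wt P_{i^{*}}$ be the largest $\wt P_i$ containing $x$, the domain of integration is contained in $\wt P_{i^{*}}$, and after a triangle-inequality change of reference point from $x$ to $z_{P_{i^{*}}}$ (noting $|x-z_{P_{i^{*}}}|\lesssim \ell(\wt P_{i^{*}})$) this integral is dominated by a constant multiple of $\delta_\mu(P_{i^{*}},\wt P_{i^{*}})\leq C\Theta_\mu(7R)$. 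The main technical obstacle I foresee is the treatment of the ``near'' contribution where $y$ is close to $x$: if $x\in P_j$ for some $j$, property (e) applied directly to $x\in P_j$ controls it, whereas if $x\in G_{*}(R)$ lies outside every $P_i$, the Whitney-type property (d) of Lemma \ref{lemreg*} must be invoked to force the $P_i$ nearest $x$ to have comparable sizes and thus yield the requisite density bound at small scales.
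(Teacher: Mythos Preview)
The paper does not give its own proof of this lemma: it is quoted verbatim from \cite{Tolsa-bilip} (Lemma~8.4 there), and the text preceding the statement says so explicitly. So there is no in-paper argument to compare against; I will simply assess your proposal.

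Your construction $g_i=\dfrac{\mu(P_i)}{\HH^1(\Gamma_R\cap\wt P_i)}\chi_{\Gamma_R\cap\wt P_i}$ does give \eqref{co1} trivially, and \eqref{co3} follows exactly as you say from the lower AD-regularity of $\Gamma_R$ together with $\tfrac12\wt P_i\cap\Gamma_R\neq\varnothing$.

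The gap is in \eqref{co2}. Your change of reference point from $x$ to $z_{P_{i^*}}$ does not work: the only information you have is $|x-z_{P_{i^*}}|\lesssim\ell(\wt P_{i^*})$, which is the diameter of the whole integration region, so for $y$ close to $x$ you cannot replace $|x-y|^{-1}$ by $|z_{P_{i^*}}-y|^{-1}$ up to a constant. Your fallback (use $z_{P_{j_0}}$ with $x\in P_{j_0}$, handle near-neighbours via property~(d), and invoke $\delta_\mu$) runs into a different problem: you only know $\delta_\mu(P_{j_0},\wt P_{j_0})\lesssim\Theta_\mu(7R)$, whereas the domain $\bigcup_{i:x\in\wt P_i}P_i$ lives in the possibly much larger square $\wt P_{i^*}$. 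Passing from $\delta_\mu(P_{j_0},\wt P_{j_0})$ to $\delta_\mu(P_{j_0},\wt P_{i^*})$ via dyadic annuli and property~(e) costs a factor $\log\bigl(\ell(\wt P_{i^*})/\ell(\wt P_{j_0})\bigr)$, which is not controlled. In short, the naive choice of $g_i$ supported on all of $\Gamma_R\cap\wt P_i$ produces supports that can pile up across many scales at a single point of $\Gamma_R$, and nothing in your argument rules this out. The construction in \cite{Tolsa-bilip} chooses the supports of the $g_i$ more carefully so that they have bounded overlap; some additional idea of this kind is needed here.
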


\vv
 

\vv
\begin{proof}[\bf Proof of \rf{eqrem77}]
We will show that
\begin{equation}\label{eqff89}
\sum_{Q\in\DD(\R^2):Q\subset R_0} \beta_{\mu,2}(3Q)^2\,\Theta_\mu(3Q)\,\mu(Q)\lesssim
 c^2(\mu) + \|\mu\|,
 \end{equation}
 which is easily seen to be equivalent to \rf{eqrem77}. We consider the corona decomposition of $\mu$ given by Lemma
 \ref{lemcorona3}. By the packing condition \rf{pack3}, to prove \rf{eqff89} it suffices to show that for every $R\in\ttt_*$,
$$\sum_{Q\in \treg(R)} \beta_{\mu,2}(3Q)^2\,\Theta_\mu(3Q)\,\mu(Q) \lesssim 
\Theta_\mu(7R)^2 \mu(7R).
$$
Since $\Theta_\mu(3Q) \lesssim \Theta_\mu(7Q)\lesssim\Theta_\mu(7R)$, it is enough to prove that
\begin{equation}\label{eqff90}
\sum_{Q\in \treg(R)} \beta_{\mu,2}(3Q)^2\,\mu(Q) \lesssim 
\Theta_\mu(7R) \mu(7R).
\end{equation}

Let $\Omega_R =\R^2\setminus \Gamma_R$, and consider the following family of Whitney squares in $\Omega_R$:
we let $\WW(\Omega_R)$ be the set of maximal dyadic squares $Q\subset \Omega_R$ such that $15 Q\cap \Gamma_R=\varnothing$. These squares have disjoint interiors and can be easily shown to satisfy the following properties:
\begin{enumerate}
\item[(a)] $7\ell(Q)\leq \dist(x,\Omega_R^{c})\leq 16\,\diam (Q)$ for all $x\in Q$,
\item[(b)] If $Q,Q'\in \WW(\Omega_R)$ and $3 Q\cap 3 Q'\neq\varnothing$, then $\ell(Q)\sim\ell(Q')$.

\end{enumerate}

We now split the family $\treg(R)$ into two subfamilies:
$\treg_{small}$ and $\treg_{big}(R)$. The former is made up of the squares from $\treg(R)$ which are contained in some square from
$\WW(\Omega_R)$, while  $\treg_{big}(R)=\treg(R)\setminus \treg_{small}(R)$. That is, $\treg_{big}(R)$ consists of the squares $Q\in \treg(R)$ which are not contained in any square from
$\WW(\Omega_R)$. 

First we will deal with the sum associated with the squares from $\treg_{small}(R)$. We set
\begin{align*}
\sum_{Q\in \treg_{small}(R)} \beta_{\mu,2}(3Q)^2\,\mu(Q) &= \sum_{S\in\WW(\Omega_R)} \,\sum_{Q\in \treg(R):Q\subset S}\beta_{\mu,2}(3Q)^2\,\mu(Q)\\
&=
\sum_{S\in\WW(\Omega_R)} \,\sum_{i:P_i\subset S}\, \sum_{Q\in \treg(R): P_i\subset Q\subset S}
\beta_{\mu,2}(3Q)^2\,\mu(P_i).
\end{align*}
For $Q$ as above we use the trivial estimate $\beta_{\mu,2}(3Q)^2\lesssim \Theta_\mu(3Q)$, and then we obtain
$$\sum_{Q\in \treg_{small}(R)} \beta_{\mu,2}(3Q)^2\,\mu(Q) \lesssim \sum_{S\in\WW(\Omega_R)} \,\sum_{i:P_i\subset S}\mu(P_i)\!\sum_{Q\in \treg(R): P_i\subset Q\subset S}\Theta_\mu(3Q).$$
Let $P_i\subset S$ with $S\in\WW(\Omega_R)$. From the definitions of $\wt P_i$ and of the Whitney squares, we deduce that
$S\subset c\wt P_i$, where $c$ is some absolute constant. 
In fact, we may assume without loss of generality that
$$
\ell(\wt P_i)\sim \max\bigl(\ell(S),\,\ell(P_i)\bigr),
$$
since otherwise we may replace $\wt P_i$ by a small concentric cube which does the job (i.e.\ so that 
both (f) from Lemma \rf{lemreg*} and the above estimate hold).
So we easily infer that
\begin{align*}
\sum_{Q\in \treg(R): P_i\subset Q\subset S}\Theta_\mu(3Q) & \lesssim \delta_\mu(P_i,c\wt P_i) + \sup_{Q\in \treg(R): P_i\subset Q\subset S}\Theta_\mu(3Q)\\
& \lesssim 
\delta_\mu(P_i,\wt P_i) +
\Theta_\mu(7R)\lesssim \Theta_\mu(7R).
\end{align*}
Then we get
\begin{equation}\label{eqff110}
\sum_{Q\in \treg_{small}(R)} \beta_{\mu,2}(3Q)^2\,\mu(Q) \lesssim \sum_{S\in\WW(\Omega_R)}  \sum_{i:P_i\subset S}\Theta_\mu(7R)
\mu(P_i) \sim \Theta_\mu(7R)\,\mu(R).
\end{equation}

We turn now our attention to the sum corresponding to the squares $Q\in\treg_{big}(R)$.
For such a square $Q$ and a line $L_Q$ to be chosen below we write
\begin{align}\label{eqff101}
\beta_{\mu,2}(3Q)^2 \,\ell(3Q)& \leq \sum_{i:P_i\cap 3Q\neq \varnothing} \int_{P_i}
\left(\frac{\dist(x,L_{Q})}{\ell(Q)}\right)^2\,d\mu(x) + \int_{3Q\cap G_*(R)}
\left(\frac{\dist(x,L_{Q})}{\ell(Q)}\right)^2\,d\mu(x)
\nonumber\\
& =
\sum_{i:P_i\cap 3Q\neq \varnothing} \int \left(\frac{\dist(x,L_{Q})}{\ell(Q)}\right)^2\,g_i(x)\,d\HH^1|_{\Gamma_R}(x)\nonumber\\
&\quad + 
\sum_{i:P_i\cap 3Q\neq \varnothing} \int \left(\frac{\dist(x,L_{Q})}{\ell(Q)}\right)^2\,\bigl(\chi_{P_i}(x)\,d\mu(x)- 
g_i(x)\,d\HH^1|_{\Gamma_R}(x)\bigr)\nonumber \\
&\quad + \int_{3Q\cap G_*(R)}
\left(\frac{\dist(x,L_{Q})}{\ell(Q)}\right)^2\,d\mu(x) \nonumber\\
&=: I_1 + I_2+ I_3.
\end{align}

We claim now that, for $Q\in\treg_{big}(R)$,
\begin{equation}\label{eqclaim39}
\mbox{if \;$P_i\cap 3Q\neq \varnothing$, \;then \;$\wt P_i\subset c_3Q$,}
\end{equation}
 for some absolute constant $c_3>1$.
 For the moment we assume this to hold and we continue with the proof of \rf{eqrem77}.

We choose $L_Q$ as some line which minimizes $\beta_{\mu,2}(c_3Q)$.
To estimate the term $I_1$ on the right hand side of \rf{eqff101} we use that 
$\sum_i g_i \lesssim \Theta_\mu(7R)$ by \rf{co2}, and that $\supp g_i\subset\wt P_i\subset c_3Q$ (for $i$ such that 
$P_i\cap 3Q\neq \varnothing$). Then we get
\begin{align*}
\sum_{i:P_i\cap 3Q\neq \varnothing} \int \left(\frac{\dist(x,L_{3Q})}{\ell(Q)}\right)^2\,g_i(x)\,d\HH^1|_{\Gamma_R}(x)&
\lesssim \Theta_\mu(7R) \int_{c_3Q} \left(\frac{\dist(x,L_{3Q})}{\ell(Q)}\right)^2\,d\HH^1|_{\Gamma_R}(x)
\\
& \sim \Theta_\mu(7R)\,\beta_{\HH^1|_{\Gamma_R},2}(c_3Q)^2\,\ell(c_3Q).
\end{align*}

To deal with $I_3$ recall that $\mu|_{G_*(R)}$ is absolutely continuous with respect to $\HH^1|\Gamma_R$ with density not exceeding
$c\,\Theta_\mu(7R)$. So we have
$$I_3\lesssim \Theta_\mu(7R) \int_{3Q} \left(\frac{\dist(x,L_{3Q})}{\ell(Q)}\right)^2\,d\HH^1|_{\Gamma_R}(x)
\lesssim \Theta_\mu(7R)\,\beta_{\HH^1|_{\Gamma_R},2}(c_3Q)^2\,\ell(c_3Q).
$$

We consider now the term $I_2$ on the right hand side of \rf{eqff101}. For $i$ such that $P_i\cap 3Q\neq \varnothing$,
we take into account that $\int g_i(x)\,d\HH^1|_{\Gamma_R}(x) = \mu(P_i)$, and then we derive 
\begin{multline*}
 \int \left(\frac{\dist(x,L_{Q})}{\ell(Q)}\right)^2\,\bigl(\chi_{P_i}(x)\,d\mu(x)- g_i(x)\,d\HH^1|_{\Gamma_R}(x)\bigr)\\ = 
\int \left[\left(\frac{\dist(x,L_{Q})}{\ell(Q)}\right)^2 - \left(\frac{\dist(z_{P_i},L_{Q})}{\ell(Q)}\right)^2\right]\,\bigl( 
\chi_{P_i}(x)\,d\mu(x)- g_i(x)\,d\HH^1|_{\Gamma_R}(x)\bigr),
\end{multline*}
where $z_{P_i}$ is the center of $P_i$. Notice that for $x\in\supp(g_i\,\HH^1|_{\Gamma_R} - 
\chi_{P_i}\,\mu)\subset \wt P_i$,
$$\left|\left(\frac{\dist(x,L_{Q})}{\ell(Q)}\right)^2 - \left(\frac{\dist(z_{P_i},L_{Q})}{\ell(Q)}\right)^2\right|
\leq \frac{|x-z_{P_i}|}{\ell(Q)} \cdot\frac{\dist(x,L_{Q}) + \dist(z_{P_i},L_{Q})}{\ell(Q)}\lesssim \frac{\ell(\wt P_i)}{\ell(Q)}.$$
Thus,
$$ \left|\int \left(\frac{\dist(x,L_{Q})}{\ell(Q)}\right)^2\,\bigl(g_i(x)\,d\HH^1|_{\Gamma_R}(x) - 
\chi_{P_i}(x)\,d\mu(x)\bigr)\right|\lesssim
\frac{\ell(\wt P_i)}{\ell(Q)}\,\mu(P_i).$$
So we get
$$I_2\lesssim \sum_{i:P_i\subset c_3Q}
\frac{\ell(\wt P_i)}{\ell(Q)}\,\mu(P_i).$$

From \rf{eqff101} and the estimates we got for $I_1$, $I_2$ and $I_3$ we deduce
$$\beta_{\mu,2}(3Q)^2 \lesssim \Theta_\mu(7R)\,\beta_{\HH^1|_{\Gamma_R},2}(c_3Q)^2 + 
\sum_{i:P_i\subset c_3Q}
\frac{\ell(\wt P_i)}{\ell(Q)^2}\,\mu(P_i).$$
Therefore,
\begin{align}\label{eqff104}
\sum_{Q\in \treg_{big}(R)} \beta_{\mu,2}(3Q)^2\,\mu(Q) & \lesssim
\Theta_\mu(7R)\sum_{Q\in \treg_{big}(R)} \beta_{\HH^1|_{\Gamma_R},2}(c_3Q)^2\,\mu(Q) \nonumber\\
&\quad + 
\sum_{Q\in \treg_{big}(R)}\sum_{i:P_i\subset c_3Q}
\frac{\ell(\wt P_i)}{\ell(Q)^2}\,\mu(P_i)\,\mu(Q).
\end{align}
For the first sum on the right hand side, using that $\mu(Q)\lesssim\Theta_\mu(7R)\,\ell(Q)$ and that $\Gamma_R$ is an AD-regular curve,
we get
\begin{align*}
\Theta_\mu(7R)\sum_{Q\in \treg_{big}(R)} \beta_{\HH^1|_{\Gamma_R},2}(c_3Q)^2\,\mu(Q) & \lesssim
\Theta_\mu(7R)^2\sum_{Q\in \treg_{big}(R)} \beta_{\HH^1|_{\Gamma_R},2}(c_3Q)^2\,\ell(Q)\\
& \lesssim 
\Theta_\mu(7R)^2\ell(R) \sim \Theta_\mu(7R)\,\mu(7R).
\end{align*}
To estimate the last sum on the right hand side of \rf{eqff104} we use that $\mu(Q)/\ell(Q)\lesssim\Theta_\mu(7R)$ and we
interchange the order the summation:
\begin{align*}
\sum_{Q\in \treg_{big}(R)}\sum_{i:P_i\subset c_3Q}
\frac{\ell(\wt P_i)}{\ell(Q)^2}\,\mu(P_i)\,\mu(Q) & \lesssim \Theta_\mu(7R)
\sum_{i} \mu(P_i)\sum_{Q\in \treg_{big}(R):c_3Q\supset P_i}
\frac{\ell(\wt P_i)}{\ell(Q)}\\
& \lesssim \Theta_\mu(7R)
\sum_{i} \mu(P_i) \lesssim \Theta_\mu(7R)\,\mu(R).
\end{align*}
Hence we get
$$\sum_{Q\in \treg_{big}(R)} \beta_{\mu,2}(3Q)^2\,\mu(Q)  \lesssim \Theta_\mu(7R)\,\mu(7R),$$
which together with \rf{eqff110} gives \rf{eqff90}.
\vv

To conclude it remains to prove the claim \rf{eqclaim39}. 
So let $Q\in\treg_{big}(R)$ and $P_i$ such that $P_i\cap 3Q\neq\varnothing$.
Clearly, the statement in the claim is equivalent to saying that $\ell(\wt P_i)\lesssim\ell(Q)$.
To prove this we distinguish two cases. Suppose first that $P_i\in\treg_{big}(R)$.
 In this case $\ell(\wt P_i)\sim
\ell(P_i)$ since $cP_i\cap\Gamma_R\neq\varnothing$ for some absolute constant $c>1$. 
We may assume that
\begin{equation}\label{eqsupp98}
\ell(Q)<\ell(P_i)/4,
\end{equation}
since otherwise $\ell(\wt P_i)\sim\ell(P_i)\lesssim \ell(Q)$ and we are done.
It is easy to check that the condition \rf{eqsupp98} implies that $Q\subset2P_i$. By the definition of the squares in 
$\treg(R)$ we have $Q\cap\supp\mu\neq\varnothing$, and then from the properties of the family $\reg_*(R)$ in Lemma \ref{lemreg*} we infer that there exists some square
$P_j\in\reg_*(R)$ with $P_j\subset Q$. Since $2P_j\cap 2P_i\neq\varnothing$, we deduce that $\ell(P_j)\sim\ell(P_i)$, by 
(d) in Lemma \ref{lemreg*}. So we infer that
$$\ell(Q)\geq \ell(P_j)\sim\ell(P_i)\sim\ell(\wt P_i),$$
as wished.

Suppose now that $P_i\in\treg_{small}(R)$. Let $S\in\WW(\Omega_R)$ be such that $P_i\subset S$, so that 
$\ell(\wt P_i)\sim\ell(S)$. We have to show that $\ell(S)\lesssim\ell(Q)$. To this end, assume that
$\ell(Q)<\ell(S)/4$, otherwise we are done. This implies that $Q\subset 3S$. 
Since $\Gamma_R$ has empty interior, there exists a Whitney square $S'\in\WW(\Omega_R)$ such that
$S'\cap Q\neq\varnothing$. Since $Q\in\treg_{big}(R)$, we have $S'\subset Q$, and thus $3S'\cap 3S\neq\varnothing$, and then
by the property (b) of Whitney squares, we derive $\ell(S)\sim\ell(S')$. Thus, we get
$$\ell(Q)\geq \ell(S')\sim \ell(S),$$
as desired.
\end{proof}

\vv

Finally, to prove Corollary \ref{corogam} we use Theorem \ref{teocurv-intro} and the main theorem from \cite{Tolsa-sem}, which asserts that, given a compact set $E\subset\C$, we have
$\gamma(E)\sim\mu(E)$,
where the supremum is taken over all measures $\mu$ supported on $E$ such that $\mu(B(x,r))\leq r$ for all $x\in\C$, $r>0$, and $c^2(\mu)\leq \mu(E)$.
Indeed, given $\mu$ satisfying these estimates, by applying Theorem \ref{teocurv-intro} and Chebyshev, we find a compact subset $F\subset E$ 
such that $\mu(F)\geq \mu(E)/2$ and 
$$
\int_0^\infty \beta_{\mu,2}(x,r)^2\,\Theta_\mu^1(x,r)\,\frac{dr}r\lesssim 1 \qquad\mbox{for all $x\in F$}.$$ 
Thus,   for a suitable positive absolute constant $c$, it easily follows that the measure $\nu=c\,\mu|_F$ satisfies 
$$\sup_{r>0}\Theta_\nu^1(x,r) +
\int_0^\infty \beta_{\nu,2}(x,r)^2\,\Theta_\nu^1(x,r)\,\frac{dr}r\leq1 \qquad\mbox{for all $x\in E$},$$
and moreover $\gamma(E)\gtrsim\mu(E)\sim\nu(F)$.
The arguments to prove the converse inequality in Corollary \ref{corogam} are similar.

\vvv


\section{Appendix: Proof of Lemma \ref{angles}} \label{secapp}

\vv
We recall the statement of the lemma.

\begin{lemma*}
Suppose $P_{1}$ and $P_{2}$ are $n$-planes in $\R^{d}$ and $X=\{x_{0},...,x_{n}\}$ are points so that
\begin{enumerate}
\item[(a)] $\eta=\eta(X)=\dfrac1{\diam X}\min\{\dist(x_{i},\spn X\backslash\{x_{i}\})\in (0,1)$ and
\item[(b)] $\dist(x_{i},P_{j})<\ve\,\diam X$ for $i=0,...,n$ and $j=1,2$, where $\ve<\eta d^{-1}/2$.
\end{enumerate}
Then
\begin{equation}
\dist(y,P_{1}) \leq \ve\ps{\frac{2d}{\eta}\dist(y,X)+\diam X}.
\end{equation}
\end{lemma*}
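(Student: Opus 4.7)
The plan is to transfer information from $P_2$ to $P_1$ via the points $X$, using barycentric coordinates. For $j=1,2$ and each $i$, set $x_i^{(j)}:=\pi_{P_j}(x_i)$, the orthogonal projection of $x_i$ onto $P_j$; hypothesis (b) gives $|x_i-x_i^{(j)}|\le\varepsilon\diam X$. For $y\in P_2$, I would express $y=\sum_{i=0}^n\lambda_i\,x_i^{(2)}$ in barycentric coordinates (so $\sum_i\lambda_i=1$), and then define the comparison point $y':=\sum_i\lambda_i\,x_i^{(1)}\in P_1$, which gives
$$\dist(y,P_1)\le|y-y'|\le 2\varepsilon\diam X\sum_{i=0}^n|\lambda_i|.$$
The whole task then reduces to bounding $\sum_i|\lambda_i|$ by $\frac{d}{\eta\diam X}(\dist(y,X)+\diam X)$ up to an absolute constant.

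For this bound, one first needs the $x_i^{(j)}$ to be affinely independent, so that the above expansion is well-defined and so that their affine hull is all of $P_j$ (there being $n+1$ affinely independent points in the $n$-plane $P_j$). This is a perturbation argument: moving each of the $n+1$ points $x_k$ by at most $\varepsilon\diam X$ can decrease the distance from the $i$-th perturbed point to the affine span of the remaining $n$ perturbed points by at most $c\,d\,\varepsilon\diam X$, and the hypothesis $\varepsilon<\eta/(2d)$ keeps this below $\eta\diam X/2$, so the perturbed configuration has general-position parameter $\eta(X^{(j)})\ge \eta/2$. Then by Cramer's rule (or the geometric ratio formula for barycentric coordinates), $|\lambda_i|=\dist(y,H_i^{(2)})/\dist(x_i^{(2)},H_i^{(2)})$, where $H_i^{(2)}:=\mathrm{aff}(X^{(2)}\setminus\{x_i^{(2)}\})$; the denominator is at least $\eta\diam X/2$. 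For the numerator, any $k\ne i$ yields $\dist(y,H_i^{(2)})\le|y-x_k^{(2)}|\le|y-x_k|+\varepsilon\diam X$, and choosing $x_k$ to be a point of $X$ nearest to $y$ (adding at most $\diam X$ if that nearest point happens to be $x_i$ itself) gives $\dist(y,H_i^{(2)})\lesssim\dist(y,X)+\diam X$. Summing over $i=0,\ldots,n$ produces the desired bound on $\sum_i|\lambda_i|$.

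The main obstacle is the perturbation-stability step---controlling the general-position parameter of the projected configuration $X^{(j)}$ in terms of $\eta$ and $\varepsilon$. The rest is essentially linear algebra carried out through the geometric meaning of barycentric coordinates. Matching the precise constants $2d/\eta$ and the clean term ``$+\diam X$'' in the conclusion requires slightly sharper bookkeeping (for instance, using signed barycentric coordinates together with the affine relation $\sum_i\lambda_i=1$ to absorb one unit of mass), but the method above yields an estimate of exactly the claimed form up to absolute constants.
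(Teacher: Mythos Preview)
Your approach is correct and is essentially the same as the paper's, just phrased in affine/barycentric language rather than through linear maps. The paper fixes $x_0$ as origin, defines linear maps $A(e_i)=x_i$ and $B(e_i)=\pi_{P_2}(x_i)$, writes $y=B(z)$ for $y\in P_2$, and compares to $A(z)\in P_1$; your coefficients $\lambda_i$ are precisely the coordinates $z_i$ (with $\lambda_0=1-\sum_{i\ge1}z_i$), and your comparison point $y'$ is $A(z)$ after an analogous projection to $P_1$.

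The one place worth noting is the step you flag as the main obstacle, the perturbation stability of the general-position parameter. The paper bypasses this entirely: rather than estimating $\eta(X^{(j)})$, it bounds the inverse of $B$ directly via the operator inequality $\inf_{|z|=1}|Bz|\ge \inf_{|z|=1}|Az|-\sup_{|z|=1}|(A-B)z|$, together with the clean lower bound $|Az|\ge n^{-1/2}\eta\,(\diam X)\,|z|$ obtained by choosing $i$ with $|\langle z,e_i\rangle|\ge n^{-1/2}|z|$ and projecting onto the normal of $\spn(X\setminus\{x_i\})$. This is probably the most efficient way to execute your perturbation step as well, and it makes the condition $\ve<\eta/(2d)$ transparent. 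Your Cramer-rule formula $|\lambda_i|=\dist(y,H_i^{(2)})/\dist(x_i^{(2)},H_i^{(2)})$ is equivalent but requires you to control each $\dist(x_i^{(2)},H_i^{(2)})$ separately, which is a little more work. Both routes give the stated estimate up to absolute constants (as does the paper's own proof, which in fact picks up an extra factor of $2$ in the final reduction).
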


\begin{proof}
Assume first that $x_{0}=0\in P_{1}\cap P_{2}$ and $X\subset P_{1}$. Define a linear map $A:\R^{n}\rightarrow \R^{d}$ by setting $A(e_{i})=x_{i}$, were $e_{1},...,e_{n}$ are the standard basis vectors (but $e_{0}=0$). Then
\[|A|=\sup_{|z|=1}|Az|\leq \sup_{|z|=1}\sum|\ip{z,e_{i}}|\cdot |x_{i}|\leq d^{\frac{1}{2}}|z|\diam X.\]
Let $z\in \R^{n}$ be so that $|A^{-1}|^{-1}=|Az|$ and let $i$ be such that $|\ip{z,e_{i}}|\geq n^{-\frac{1}{2}}|z|$ (since this has to happen for some $\ip{z,e_{i}}$). Then we get
\[
|A^{-1}|^{-1}=|Az|
\geq \dist(Az,\spn(X\backslash\{x_{i}\}))
=|\ip{z,e_{i}}|\dist(x_{i},\spn(X\backslash\{x_{i}\}))
\geq n^{-\frac{1}{2}}|z|\eta\diam X
\]
Thus, we have that $A/\diam X$ is $n^{1/2}/\eta$-bi-Lipschitz. If we define another operator $B$ by setting $B(e_{i})=\pi_{P_{2}}(x_{i})$. Then, for any $z\in\R^{n}$, by our standing assumptions,
\[|A(z)-B(z)|
=\av{\sum_{i=1}^{n}(A-B)(e_{i})\ip{z,e_{i}}}
\leq \ve |z|d^{\frac{1}{2}}\diam X.\]
Hence, since $\ve<\eta d^{-1}/2$,
\begin{align*}
|B'|
& =\sup_{|z|=1}|B(z)|\leq \ve d^{\frac{1}{2}}|z|\diam X+\sup_{|z|=1}|A(z)|
\leq \frac{\eta}{2d^{\frac{1}{2}}} \cdot d^{\frac{1}{2}} |z|\diam X +d^{\frac{1}{2}}|z|\diam X\\
& <2d^{\frac{1}{2}}|z|\diam X\end{align*}
and
\[\inf_{|z|=1}|B(z)|
\geq \inf_{|z|=1}|A(z)|- \ve d^{\frac{1}{2}}\diam X
\geq \frac{1}{2d^{\frac{1}{2}}}|z|\eta\diam X.\]

Thus, $B/\diam X$ is a $\frac{2d^{\frac{1}{2}}}{\eta}$-bi-Lipschitz map from $\R^{d}$ onto $P_{2}$. For $y\in P_{2}$, if $B(z)=y$, then $A(z)\in P_{1}$ and so
\[
\dist(y,P_{1})
 \leq |A(z)-y| 
=|A(z)-B(z)| 
\leq \ve |z|d^{\frac{1}{2}} \diam X \leq \frac{2d\ve }{\eta}|B(z)|=\frac{2d\ve}{\eta}|y|\]
and for $y\in P_{1}$, if $A(z)=y$, then
\[\dist(y,P_{2})
\leq |B(z)-y| 
=|B(z)-A(z)|
\leq \ve |z|d^{\frac{1}{2}}\diam X
\leq \frac{d\ve}{\eta}|y|.\]

Now, if it happens that $0\not\in P_{1}\cap P_{2}$ but $X\subset P_{2}$, we can replace $P_{1}$ with $P_{1}'$, the translate of $P_{1}$ containing $0$, and apply the same arguments above to get an estimate between $P_{1}'$ and $P_{2}$. Since $P_{1}$ and $P_{1}'$ are distance at most $\dist(0,P_{1})<\ve\,\diam X$, this gives
\[\dist (y,P_{1})\leq \ve \,\ps{\frac{2d}{\eta}|y|+\diam X} \mbox{ for all }y\in P_{2}\]
and
\[\dist (y,P_{2})\leq \ve \,\ps{\frac{2d}{\eta}|y|+\diam X} \mbox{ for all }y\in P_{1}.\]
Now, if $X\not\subset P_{2}$, let $P_{0}$ denote the smallest $n$-plane containing $X$ (again, assume $x_{0}=0$). Then we apply the above estimates between $P_{1}$ to $P_{0}$ and $P_{0}$ to $P_{2}$ using the triangle inequality and we obtain
\[\dist (y,P_{1})\leq 2\ve \ps{\frac{2d}{\eta}|y|+\diam X} \mbox{ for all }y\in P_{2}\]
and
\[\dist (y,P_{2})\leq 2\ve \ps{\frac{2d}{\eta}|y|+\diam X} \mbox{ for all }y\in P_{1}.\]

Now, there is no need to assume $x_{0}=0$, since we can just replace $|y|$ with $|x_{0}-y|$ above. Finally, there was no special reason we dealt with $x_{0}$ in particular, and so minimizing the above inequalities over all $|x_{0}-y|,....,|x_{n}-y|$, we obtain the desired estimate.
\end{proof}

\vv


\begin{thebibliography}{MTV2}

\bibitem[Ah]{Ahlfors} L. Ahlfors, {\em Bounded analytic functions,} Duke Math.
J. {14} (1947), 1--11.

\bibitem[B1]{Bes1}
A. S.~Besicovitch,
 {\em On the fundamental geometrical properties of linearly
              measurable plane sets of points,} Math. Ann. 98 (1928), 422--464.


\bibitem[B2]{Bes2}
A. S.~Besicovitch,
 {\em On the fundamental geometrical properties of linearly
              measurable plane sets of points ({II}),} Math. Ann. 115 (1938), 296--329.

\bibitem[B3]{Bes3}
A. S.~Besicovitch,
 {\em On the fundamental geometrical properties of linearly
              measurable plane sets of points ({III}),} Math. Ann. 116 (1939), 349--357.

\bibitem[BS1]{Badger-Schul} M.\ Badger and R.\ Schul, {\em Multiscale analysis of 1-rectifiable measures: necessary conditions,} to appear in Math. Ann. (2014).

\bibitem[BS2]{BS2} M.\ Badger and R.\ Schul {\em Two sufficient conditions for rectifiable measures,} preprint. 

\bibitem[Da]{David-vitus} G. David, {\em Unrectifiable $1$-sets have vanishing analytic capacity,} Revista Mat. Iberoamericana 14(2) (1998), 369--479.



\bibitem[DaM]{David-Mattila} G. David and P. Mattila. {\em Removable sets for Lipschitz
harmonic functions in the plane.} Rev. Mat. Iberoamericana 16(1) (2000),
137--215.

\bibitem[DS1]{DS1} G. David and S. Semmes, {\em Singular integrals and
rectifiable sets in $\R^n$: Beyond Lipschitz graphs,} Ast\'{e}risque
No. 193 (1991).

\bibitem[DS2]{DS2} G. David and S. Semmes, \emph{Analysis of and on uniformly
rectifiable sets}, Mathematical Surveys and Monographs, 38. American
Mathematical Society, Providence, RI, (1993).

\bibitem[DT]{DT1} G. David and T. Toro, {\em Reifenberg parameterizations for sets with holes,} Mem. Amer. Math. Soc. 215 (2012), no. 1012.

\bibitem[J]{J-TSP}
P.~W. Jones, \emph{Rectifiable sets and the traveling salesman problem},
  Invent. Math. {102} (1990), no.~1, 1--15. \MR{MR1069238 (91i:26016)}

\bibitem[L\'e]{Leger} J.C. L\'eger, {\em Menger curvature and
rectifiability,} Ann. of Math. 149 (1999), 831-869.

\bibitem[LW1]{LW1} G. Lerman and J.T. Whitehouse, {\em High-dimensional Menger-type curvatures - part I: geometric multipoles and multiscale inequalities.} Rev. Mat. Iberoam. 27 (2011), no. 2, 493-555. 

\bibitem[LW2]{LW2} G. Lerman and J.T. Whitehouse, {\em  High-dimensional Menger-type curvatures-part II: d-separation and a menagerie of curvatures.} Constr. Approx. 30 (2009), no. 3, 325--360.

\bibitem[Me]{Melnikov} M.S. Melnikov, {\em Analytic capacity: discrete
approach and curvature of a measure,} Sbornik: Mathematics
{186}(6) (1995), 827--846.


\bibitem[MeV]{MV} M.S. Melnikov and J. Verdera, {\em A geometric proof of the
$L^2$ boundedness of the Cauchy integral on Lipschitz graphs.}
Internat. Math. Res. Notices (1995), 325--331.

\bibitem[MMV]{MMV}  P.Mattila, M. S. Melnikov and J. Verdera, {\em The Cauchy integral, analytic capacity, and uniform rectifiability,} Ann. of Math. (2) 144 (1996), 127--136.

\bibitem[Ok]{O-TSP}
K.~Okikiolu, \emph{Characterization of subsets of rectifiable curves in {${\R}^n$}}, J. London Math. Soc. (2) {46} (1992), no.~2, 336--348.
  \MR{1182488 (93m:28008)}
  
\bibitem[Pa]{Pa} H.~Pajot, {\em Conditions quantitatives de rectifiabilit\'e,} Bulletin de la Soci\'et\'e
Math\'ematique de France, Volume 125 (1997), 1--39.

\bibitem[Sch]{S-TSP}
R. Schul, \emph{Subsets of rectifiable curves in {H}ilbert space---the analyst's
  {TSP}}, J. Anal. Math. {103} (2007), 331--375. \MR{2373273
  (2008m:49205)}

\bibitem[To1]{Tolsa-sem} X. Tolsa, {\em Painlev\'{e}'s problem and the
semiadditivity of analytic capacity}, Acta Math. 190:1 (2003),
105--149.

\bibitem[To2]{Tolsa-bilip} X. Tolsa, {\em Bilipschitz maps, analytic
capacity, and the Cauchy integral}, Ann. of Math. 162:3 (2005), 1241-1302.


\bibitem[To3]{Tolsa-delta} X. Tolsa, {\em Rectifiable measures, square functions involving densities, and the Cauchy transform.}
Preprint (2014).

\bibitem[To4]{Tolsa-nec} X. Tolsa, {\em Necessary conditions for rectifiability in higher dimensions.} Preprint (2015).


\bibitem[To5]{Tolsa-llibre}
X.~Tolsa,
{\em Analytic capacity, the {C}auchy transform, and non-homogeneous
  {C}alder\'on-{Z}ygmund theory}, volume 307 of {\em Progress in Mathematics}.
 Birkh\"auser Verlag, Basel, 2014.
 



\end{thebibliography}
\end{document}